\numberwithin{equation}{section} 
\theoremstyle{plain}
\newtheorem{thm}{Theorem}[section]
\newtheorem{lem}[thm]{Lemma}
\newtheorem{prop}[thm]{Proposition}
\newtheorem{exa}[thm]{Example}
\newtheorem{defn}[thm]{Definition}
\newtheorem{rmk}[thm]{Remark}
\newcommand{\R}{\mathbb{R}}
\newcommand{\C}{\mathbb{C}}
\newcommand{\Q}{\mathbb{Q}}
\newcommand{\N}{\mathbb{N}}
\newcommand{\Z}{\mathbb{Z}}
\newcommand{\A}{\mathcal{A}}
\newcommand{\D}{\mathcal{D}}
\newcommand{\F}{\mathcal{F}}
\begin{document}
\baselineskip 14pt

\title{Matrix representations for some self-similar measures  on $\R^{d}$ }

\author{Yu-Feng Wu}
\address[]{Department of Mathematics\\ The Chinese University of Hong Kong\\ Shatin,  Hong Kong}
\email{yufengwu.wu@gmail.com}


\keywords{Self-similar measures, finite type condition, matrix representations, $L^q$-spectrum.}
\thanks{2010 {\it Mathematics Subject Classification}: 28A80, 37C45}

\begin{abstract}
We establish matrix representations for self-similar measures on   $\R^d$
generated by  equicontractive IFSs  satisfying the finite type condition. As an application, we prove  that the $L^q$-spectrum of   every  such  self-similar measure  is differentiable on $(0,\infty)$. This extends an earlier result of Feng (J. Lond. Math. Soc. (2) 68(1):102--118, 2003) to higher dimensions.
\end{abstract}

\maketitle

\section{Introduction}
In this paper, we study self-similar measures on $\R^d$ generated by iterated function systems satisfying the finite type condition. By an iterated function system (IFS) (of similitudes)  on $\R^d$, we mean a finite family of contracting similitudes on $\R^d$.    According to a result of Hutchinson \cite{hutchinson}, given an IFS $\Phi=\{S_i\}_{i=1}^m$ on $\R^d$, there is a unique non-empty compact set $K\subset \R^d$ satisfying  $K=\bigcup_{i=1}^m S_i(K)$,  which is called the {\em self-similar set} generated by $\Phi$. 
Moreover, given a probability vector ${\bm p}=(p_1,\ldots,p_m)$, i.e. each $p_i>0$ and $\sum_{i=1}^mp_i=1$, there exists a unique Borel probability measure $\mu$ supported on $K$ such that 
\begin{equation}\label{eq: similarity identity}
\mu=\sum_{i=1}^mp_i\mu\circ S_i^{-1}.
\end{equation}
We call  $\mu$ the {\em self-similar measure} generated by $\Phi$ and  ${\bm p}$. 

Given an IFS $\Phi=\{S_i\}_{i=1}^m$ on $\R^d$,  let  $\Sigma=\{1, \ldots, m\}$ be the alphabet associated to $\Phi$.  For each $n\in\N$, let $\Sigma_n=\{i_1\ldots i_n: i_k\in\Sigma,1\leq k\leq n\}$.
Moreover, set $\Sigma_0=\{\varepsilon\}$, where $\varepsilon$ denotes the empty word. Let $\Sigma_*=\bigcup_{n=0}^{\infty}\Sigma_n$ and let  $\Sigma^{\N}$ be the collection of infinite words over $\Sigma$.  For  $I=i_1\ldots i_n\in \Sigma_*$,  write $S_I=S_{i_1}\circ\cdots\circ S_{i_n}$. In particular, set $S_{\varepsilon}=id$, the identity map on $\R^d$. For a similitude $S$ on $\R^d$, we let $\rho_S>0$ be the similarity ratio of $S$.   We say that $\Phi$  is   {\em equicontractive} if   $\rho_{S_1}=\cdots=\rho_{S_m}$.

This paper is motivated by the study of equicontractive self-similar measures on $\R$  by Feng \cite{feng1}. More precisely, let $\mu$ be a self-similar measure on $\R$ generated by an  IFS $\Phi=\{S_i\}_{i=1}^m$ on $\R$ of the form 
\begin{equation}\label{eq: Feng's model}
S_i(x)=\lambda x+b_i, \quad  i=1, \ldots, m,
\end{equation}
where $\lambda\in(0,1)$ and $b_1,\ldots, b_m\in \R$. In \cite{feng1}, Feng investigated the case when $\Phi$ satisfies the so-called   finite type condition, i.e., there exists a finite set $\Gamma_0$ such that for any $n\geq 1$ and any  $I, J\in\Sigma_n$, 
\begin{equation}\label{eq: Feng's FTC}
\text{either }\quad \lambda^{-n}|S_I(0)-S_J(0)|\geq c \quad \text{ or } \quad \lambda^{-n}|S_I(0)-S_J(0)|\in\Gamma_0,
\end{equation}
where $c=(1-\lambda)^{-1}(\max_{1\leq i\leq m}b_i-\min_{1\leq i\leq m}b_i)$. Feng established the matrix representations for $\mu$ on the so-called basic net intervals and proved that    the $L^q$-spectrum  of $\mu$ (see \eqref{eqdefLq} for the definition)  is differentiable on $(0,\infty)$. The result of  matrix representations for $\mu$ is also used to give a checkable criterion for the absolute continuity of $\mu$ (cf. \cite[Theorem 6.2]{FLS20}), and   it is also  applied to study the topological structure of the set of local dimensions of $\mu$ in a series of papers \cite{HHM16,HHN18,HHSi18}.

The purpose of this paper is to extend the results of  \cite{feng1} to higher dimensions. We will consider self-similar measures on $\R^d$ generated by equicontractive IFSs  satisfying the following version of  the finite type condition. 

\begin{defn}\label{deFTC}
Let $\Phi=\{S_i\}_{i=1}^m$ be an equicontractive IFS on $\R^d$ which generates a self-similar set  $K$. 
 We say that $\Phi$ satisfies the finite type condition (FTC) if there exists a finite set $\Gamma$ such that for any $n\geq 1$ and $I, J\in\Sigma_n$,
\begin{equation}\label{SFTC}
\text{either }\quad S_I(K)\cap S_J(K)=\emptyset \quad  \text{ or }\quad S_I^{-1}\circ S_J\in\Gamma.
\end{equation}
\end{defn}

The concept of the FTC was first introduced by Ngai and Wang \cite{NgaiWang} in a more general setting.  It is easily seen that  for an IFS on $\R$ of the form \eqref{eq: Feng's model}, the conditions \eqref{eq: Feng's FTC} and \eqref{SFTC} are  equivalent. 

To state our   main results of this paper, we first introduce some notation. First we define the (canonical) Borel partitions of a self-similar set generated by an equicontractive IFS.

Let $K\subset\R^d$ be the  self-similar set  generated by  an equicontractive IFS $\Phi=\{S_i\}_{i=1}^m$. Let $\mathcal{S}$ denote the set of  similitudes on $\R^d$ and $2^{\mathcal{S}}$ be the collection of all  the subsets of $\mathcal{S}$. 
For each $n\geq 0$, we define a mapping $\Lambda_n: K\to 2^{\mathcal{S}}$ by
\begin{equation}\label{the map Lambda_n0}
\Lambda_n(x)=\{S_I: I\in \Sigma_n \text{ with }x\in S_I(K)\} \quad \ \text{ for }   x\in K.
\end{equation}
Let $\Lambda_n(K)$ be the image of $K$ under $\Lambda_n$, i.e. $\Lambda_n(K)=\{\Lambda_n(x):x\in K\}$. 
Then define
\begin{equation}\label{definexin}
\xi_n=\left\{\Lambda_n^{-1}(\mathcal{U}):\mathcal{U}\in\Lambda_n(K)\right\}.
\end{equation}
It is  easy to see that $\xi_n$ is a finite  partition of $K$ whose elements are all Borel sets. We call $\xi_n$ the $n$-th (canonical) {\em Borel partition} of $K$. 

Also we  need the notion of $L^q$-spectrum of measures. 
Let $\nu$ be   a finite Borel measure  on $\mathbb{R}^d$ with compact support. For  $q\in\R$, the {\em $L^q$-spectrum} of $\nu$ is defined by 
\begin{equation}\label{eqdefLq}
\tau(q)=\tau(\nu, q)=\liminf_{\delta\to0}\frac{\log\left({\sup\sum_i\nu(B(x_i,\delta))^q}\right)}{\log{\delta}},
\end{equation}
where the supremum is taken over all families of disjoint closed balls $B(x_i, \delta)$ of radius $\delta$ and  centres $x_i\in\text{supp}\nu$.

The main results  of this paper are the following two results, which extend \cite[Theorem 1.1]{feng1} from $\R$ to $\R^d$.

\begin{thm}\label{main thm:matrix repn}
Let $\Phi=\{S_i\}_{i=1}^m$ be an equicontractive IFS on $\R^d$ and $K$ be the self-similar set generated by $\Phi$. Suppose that  $\Phi$  satisfies  the FTC.  Let $\mu$ be
the self-similar  measure generated by $\Phi$ and a  probability vector $(p_1,\ldots,p_m)$. 
Then there exist  $s, N\in\N$, $N\times N$ non-negative  matrices $M_1,\ldots, M_s$,  and $N$-dimensional  positive row vectors $\mathbf{w}_1,\ldots, \mathbf{w}_s$ such that for any $n\in\N$ and $\Delta\in \xi_n$ with $\mu(\Delta)>0$, we have 
\[\mu(\Delta)=\bm{e}_1M_{i_1}\cdots M_{i_n}\mathbf{w}_{i_n}^T,\]
where $\eta_{1}\eta_{i_1}\ldots\eta_{i_n}$ is the symbolic expression of $\Delta$ (see Section \ref{section: symbolic expression} for the definition),   $\bm{e}_1=(1,0,\ldots,0)\in\R^N$ and   $\bm{a}^T$ denotes the transpose of $\bm{a}$.
\end{thm}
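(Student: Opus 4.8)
The plan is to build a finite directed "transition" structure on the set of local geometric configurations that appear at each level of the construction, and then read off the measure of each Borel-partition piece as a product of transition matrices along the coding path of that piece. The starting observation is that, under the FTC, the "shapes" one sees at level $n$ — namely the collections $\Lambda_n(x)$, each element of which is a word $I\in\Sigma_n$ together with the similitude $S_I$ — come in only finitely many types once we factor out the common contraction ratio $\rho^n$ and translate to a normalized position. Concretely, I would fix a piece $\Delta\in\xi_n$, let $\mathcal U=\Lambda_n(x)$ for $x\in\Delta$, and record for $\Delta$ the finite data consisting of the relative positions $S_I^{-1}\circ S_J$ for $I,J$ ranging over the indices in $\mathcal U$; the FTC guarantees these lie in the finite set $\Gamma$ whenever the corresponding cylinders meet, so there are only finitely many such "neighbourhood types." Call the set of these types (together with a designated initial type for $\Delta=K$ at level $0$) the vertex set of a graph; this is where the constants $s$ and $N$ will come from — $s$ bounds the number of possible "offspring labels" and $N$ the number of vertices (neighbourhood types), after a suitable refinement so that each type carries enough bookkeeping.

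Next I would describe the edges. Passing from level $n$ to level $n+1$, a piece $\Delta\in\xi_n$ subdivides into finitely many pieces of $\xi_{n+1}$, each obtained by appending one more symbol. The key point, to be proved using the FTC and equicontractivity, is that the neighbourhood type of a child $\Delta'$ of $\Delta$, together with the list of "new" cylinders $S_{Ii}(K)$ that cover $\Delta'$ and their predecessors $S_I(K)$, depends only on the type of $\Delta$ and on a finite label $i_{n+1}\in\{1,\dots,s\}$ indexing which child we take. This is exactly the self-similarity of the whole picture under scaling by $\rho$: zooming in on $\Delta'$ by $\rho^{-(n+1)}$ produces a configuration that is a translate of one already on our finite list. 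Having established this, I define, for each label $t\in\{1,\dots,s\}$, an $N\times N$ matrix $M_t$ whose $(\eta,\eta')$ entry is $\sum_{i} p_i$, the sum running over those symbols $i$ that turn type $\eta$ into type $\eta'$ under label $t$ — i.e. $M_t$ records, weighted by the probabilities $p_i$, how the mass carried by each cylinder covering $\Delta$ is pushed onto the cylinders covering the child. The vectors $\mathbf w_t$ are chosen so that $\mathbf w_{\eta}^T$ at type $\eta$ collects the "final" mass contribution of a piece of that type; positivity of $\mathbf w_t$ and non-negativity of $M_t$ follow since all $p_i>0$.

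The identity $\mu(\Delta)=\bm e_1 M_{i_1}\cdots M_{i_n}\mathbf w_{i_n}^T$ is then proved by induction on $n$, unwinding the similarity identity \eqref{eq: similarity identity}: $\mu(\Delta)=\sum_{I}\mu\text{-mass contributed through }S_I(K)$, where $I$ runs over the cylinders in $\mathcal U=\Lambda_n(x)$, and each such contribution is $p_{i_1}\cdots p_{i_n}$ times a normalized mass that depends only on the neighbourhood type — this is precisely the vector that the matrix product tracks, with $\bm e_1$ selecting the initial type ($\Delta=K$, a single cylinder $S_\varepsilon$, mass $1$) and the matrices propagating the weighted push-forward at each step. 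Care is needed to ensure that pieces $\Delta$ with $\mu(\Delta)=0$ are handled correctly (they correspond to coding paths along which the relevant coordinate of the vector vanishes, which the non-negativity makes consistent), and that the symbolic expression $\eta_1\eta_{i_1}\dots\eta_{i_n}$ from Section~\ref{section: symbolic expression} is exactly the sequence of (type, label) data I have been accumulating.

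The main obstacle I anticipate is the finiteness and well-definedness of the neighbourhood-type graph in $\R^d$: in dimension one the net-interval combinatorics of \cite{feng1} give a clean total order and an obvious notion of "adjacent" cylinders, whereas in $\R^d$ one must argue directly from \eqref{SFTC} that (i) only finitely many configurations $\{S_I^{-1}\circ S_J : I,J \text{ with } S_I(K)\cap S_J(K)\neq\emptyset,\ I,J\in\Lambda_n(x)\}$ occur, and (ii) the child-type depends on the parent-type through a bounded amount of data, so that the transition rule is genuinely a finite automaton. Equicontractivity is essential here (it makes the rescaling factor uniform across all symbols at a given level), and the FTC is what collapses infinitely many a priori distinct pictures onto a finite list; making the reduction to a finite index set $\{1,\dots,s\}$ for the children, and checking that the resulting $M_t$, $\mathbf w_t$ are independent of $n$, is the technical heart of the argument.
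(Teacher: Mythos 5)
Your overall strategy coincides with the paper's: encode each $\Delta\in\xi_n$ by a finite ``configuration type,'' build transfer matrices recording how mass propagates from a parent to its children via the similarity identity \eqref{eq: similarity identity}, and conclude by induction along the coding path. However, two points you defer to ``suitable bookkeeping'' are genuine gaps rather than routine details. First, the type data you propose — the relative positions $S_I^{-1}\circ S_J$ for $I,J$ indexing $\Lambda_n(\Delta)$ only — is insufficient. The set $\Delta$ is the intersection of the cylinders in $\Lambda_n(\Delta)$ \emph{minus} the union of all other level-$n$ cylinders, so to determine the normalized shape of $\Delta$ (and hence the entries $\mu(f^{-1}\Delta)$ of your terminal vectors $\mathbf{w}$, and which candidate children are non-empty) one must also record the larger neighbourhood $N_n(\Delta)$ of all level-$n$ cylinders meeting $\bigcap_{f\in\Lambda_n(\Delta)}f(K)$; the paper's characteristic vector carries both $V_n(\Delta)$ and $U_n(\Delta)$ for exactly this reason. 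Relatedly, positivity of $\mathbf{w}$ does not follow from $p_i>0$ alone: one must restrict the coordinates to those $f\in\Lambda_n(\Delta)$ with $\mu(f^{-1}\Delta)>0$ (the paper's $\Lambda_n^*(\Delta)$), since some normalized masses can vanish.

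Second, and more seriously, your matrices are indexed by positions within a type, so you need a \emph{canonical, similarity-invariant ordering} of the elements of $\Lambda_n(\Delta)$ and $N_n(\Delta)$ that is preserved when passing from parent to child and when identifying two occurrences of the same type via the conjugating similitude. In $\R$ the natural order of net intervals does this for free; in $\R^d$ the rotations in the $S_i$ make it nontrivial, and this is precisely the obstruction the paper singles out. Its solution is the linear order $\preccurlyeq$ on $\Phi_*$ induced by the lexicographically minimal representing word, together with Lemma \ref{lemporder} showing that the order is respected by the type-identifying similitude; without some such device the entries of $T(\alpha,\beta)$ are not well-defined functions of the pair of types. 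A further small ingredient you omit is the component $r_n(\Delta)$ (the relative position of the child's leading map inside the parent's frame), which the paper includes in the characteristic vector so that distinct siblings receive distinct letters and so that the transition matrix is determined by the pair of letters alone (see \eqref{eq29}); your ``child label'' gestures at this but is not pinned down. In short: right architecture, but the finite automaton is not yet well-defined as described.
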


\begin{thm}\label{main thm}Under the assumptions of Theorem \ref{main thm:matrix repn}, the $L^q$-spectrum $\tau(q)$ of $\mu$ is differentiable on $(0,\infty)$.
\end{thm}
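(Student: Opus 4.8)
The plan is to deduce differentiability of $\tau(q)$ on $(0,\infty)$ from the matrix representation of Theorem \ref{main thm:matrix repn}, following the strategy of Feng \cite{feng1} but carried out over the Borel partitions $\xi_n$ instead of the net intervals of the real line. First I would relate the $L^q$-spectrum to the partition sums $\sum_{\Delta\in\xi_n}\mu(\Delta)^q$. Because $\Phi$ is equicontractive with common ratio $\rho$, each $\Delta\in\xi_n$ is a union of images $S_I(K)$ with $I\in\Sigma_n$, all of comparable diameter $\asymp\rho^n$, and the FTC guarantees bounded overlap among the pieces $S_I(K)$ meeting a fixed $\Delta$; a standard covering/packing comparison (as in \cite{feng1}) then shows
\[
\tau(q)=\liminf_{n\to\infty}\frac{\log\sum_{\Delta\in\xi_n}\mu(\Delta)^q}{n\log\rho}\qquad(q>0).
\]
So it suffices to prove the right-hand limit exists and is a differentiable function of $q$.

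Next I would feed in the matrix representation. Writing $T_n(q)=\sum_{\Delta\in\xi_n,\,\mu(\Delta)>0}\mu(\Delta)^q$ and using $\mu(\Delta)=\bm e_1 M_{i_1}\cdots M_{i_n}\mathbf w_{i_n}^T$, the sum $T_n(q)$ is governed by products of the matrices $M_1,\ldots,M_s$ entrywise raised to the power $q$. The key device, exactly as in \cite{feng1}, is to introduce for each admissible symbol $i$ the matrix $M_i(q)$ obtained by replacing each entry $a$ of $M_i$ by $a^q$ (with $0^q=0$), and likewise $\mathbf w_i(q)$; then $T_n(q)$ is comparable, up to multiplicative constants independent of $n$, to $\sum \bm e_1 M_{i_1}(q)\cdots M_{i_n}(q)\mathbf w_{i_n}(q)^T$ summed over admissible words of length $n$, which in turn is comparable to $\bm e_1 \big(\sum_i M_i(q)\big)^n \mathbf v(q)^T$ for a suitable positive vector. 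Hence
\[
\tau(q)\log\frac1\rho=-\lim_{n\to\infty}\frac1n\log\Big(\bm e_1 A(q)^n\mathbf v(q)^T\Big)=-\log\lambda\big(A(q)\big),
\]
where $A(q)=\sum_i M_i(q)$ and $\lambda(A(q))$ is its spectral radius; here one must verify that the relevant sub-collection of matrices is \emph{primitive} (or at least that the associated labelled graph is suitably irreducible on the part reachable from $\bm e_1$ and reaching the support of $\mathbf v(q)$), so that the Perron--Frobenius limit is the spectral radius and the $\liminf$ is a genuine limit. This reduces everything to showing $q\mapsto\log\lambda(A(q))$ is differentiable on $(0,\infty)$.

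For the differentiability of the top eigenvalue I would invoke analytic perturbation theory of matrices: each entry of $A(q)$ is of the form $\sum_k c_k^{\,q}$ with $c_k>0$, hence real-analytic in $q$ on $(0,\infty)$, so $q\mapsto A(q)$ is a real-analytic matrix-valued map. By the Perron--Frobenius theorem, for a primitive non-negative matrix the spectral radius is a simple eigenvalue, and a simple eigenvalue of an analytic family depends analytically on the parameter (Kato); therefore $\lambda(A(q))$, and hence $\tau(q)$, is real-analytic — in particular differentiable — on any open interval where $A(q)$ stays primitive. The remaining point is that primitivity may fail for some $q$ only through certain entries vanishing, but on $(0,\infty)$ an entry $\sum_k c_k^q$ with $c_k>0$ never vanishes, so the zero/nonzero pattern of $A(q)$ is independent of $q\in(0,\infty)$; thus primitivity of the reachable block either holds for all $q>0$ or the problem decomposes into finitely many irreducible blocks, each handled separately and then combined by noting that $\lambda(A(q))$ is the maximum of finitely many analytic eigenvalue branches together with a convexity argument to rule out a genuine corner — and in fact, since $\tau$ is known to be concave and each branch analytic, any non-differentiability point would force two analytic concave branches to cross, which a short argument excludes.

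The main obstacle I anticipate is \textbf{not} the eigenvalue perturbation step but the bookkeeping needed to pass rigorously from the geometric $L^q$-sum in \eqref{eqdefLq} to the clean matrix-product expression: one must control the bounded-overlap constants coming from the FTC uniformly in $n$, handle the finitely many "boundary" symbols and the vectors $\mathbf w_{i_n}$ without disturbing the exponential growth rate, and verify the irreducibility/primitivity of the matrix system (discarding transient states that are unreachable from $\bm e_1$ or that cannot reach any state contributing positive mass). Once that combinatorial reduction is in place, identifying $\tau(q)$ with $-\log_\rho\lambda(A(q))$ and invoking analyticity of the Perron eigenvalue finishes the proof; the restriction to $q>0$ is exactly what keeps the entries $\sum_k c_k^q$ strictly positive and the perturbation analytic, which is why the statement is confined to $(0,\infty)$.
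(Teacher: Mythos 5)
Your first step (replacing the ball/dyadic sums in the definition of $\tau(q)$ by $\sum_{\Delta\in\xi_n}\mu(\Delta)^q$ using the FTC and bounded overlap) matches the paper's Proposition \ref{prop: calculate tau(q) by Delta}. But the core of your argument contains a genuine gap: the passage from $\sum_{\text{words}}\bigl(\bm e_1 M_{i_1}\cdots M_{i_n}\mathbf w_{i_n}^T\bigr)^q$ to $\sum_{\text{words}}\bm e_1 M_{i_1}(q)\cdots M_{i_n}(q)\mathbf w_{i_n}(q)^T$, where $M_i(q)$ is the entrywise $q$-th power, is false with constants uniform in $n$. The quantity $\bm e_1 M_{i_1}\cdots M_{i_n}\mathbf w^T$ is a sum over up to $N^n$ paths of products of entries; for $q\neq 1$ the comparison $(\sum_k x_k)^q\asymp\sum_k x_k^q$ costs a factor $(\#\text{terms})^{|q-1|}$, which here is exponential in $n$ and changes the growth rate. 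A one-line counterexample: if $M$ is the $2\times 2$ matrix with all entries equal to $a$, then $\bigl(\bm e_1 M^n\mathbf w^T\bigr)^q\asymp 2^{nq}a^{nq}$ while $\bm e_1 M(q)^n\mathbf w(q)^T\asymp 2^{n}a^{nq}$, so the two exponential rates differ by $(q-1)\log 2$. Consequently $\tau(q)$ is \emph{not} $-\log_\rho\lambda(A(q))$ for $A(q)=\sum_i M_i(q)$, and the entire analytic-perturbation argument for a simple Perron eigenvalue is applied to the wrong object. This is precisely the difficulty that forces one to work instead with the pressure function
\begin{equation*}
P(q)=\lim_{n\to\infty}\frac1n\log\Bigl(\sum\|M_{i_1}\cdots M_{i_n}\|^q\Bigr),
\end{equation*}
where the $q$-th power is applied to the norm of the whole matrix product and the sum runs over admissible words (not paths). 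The paper proves $\tau(q)=P(q)/\log\rho$ (Theorem \ref{thm: t(q) and P(q)}) and then invokes the theorem of Feng and Lau \cite{fenglau1} that $P$ is differentiable on $(0,\infty)$ when $\sum_i M_i$ is irreducible; that differentiability result rests on a subadditive/Gibbs-measure analysis of norms of matrix products and is not a consequence of eigenvalue perturbation theory.

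A secondary point: you correctly flag that irreducibility of the matrix system must be verified after discarding transient symbols, but this is where the genuinely new work of the paper lies in dimension $d\ge 2$. The paper passes to an essential class $\widehat\Omega$ of $\widetilde\Omega$ and proves irreducibility of $\sum_{i=1}^t M_i$ (Proposition \ref{prop: irreducible}) via Lemmas \ref{lem: finite many mu}--\ref{lem: k row is positive}; the key input is the Borel density lemma, used to produce, for each $f\in\Lambda_n^*(\Delta)$, a word $I$ with $\mu(S_I^{-1}\circ f^{-1}\Delta)=1$. In $\R$ this step is unnecessary because the interior of each basic net interval meets $K$, but in $\R^d$ it cannot be waved away, so "a short argument excludes" would need to be replaced by this construction. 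You would need to rebuild your proof around the Feng--Lau pressure function and supply this irreducibility argument for it to go through.
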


We present two examples to which Theorems \ref{main thm:matrix repn}-\ref{main thm}  apply directly. Recall that an algebraic integer  $\beta>1$ is called a {\em Pisot number} if all its  
Galois conjugates have modulus less than $1$. Similarly, we call  an algebraic integer $\zeta\in\C\setminus \R$ a {\em complex Pisot number} if $|\zeta|>1$ and all its Galois conjugates, except $\bar{\zeta}$, have modulus less than $1$. 

\begin{exa}\label{exam: Pisot without rotation}\cite[Theorem 2.5]{NgaiWang}\label{exa1} Let $\Phi=\{\rho x+a_i\}_{i=1}^m$ be an IFS on $\R^d$, where $\rho^{-1}$ is a Pisot number and $a_i\in\Q(\rho)^d$ for $1\leq i\leq m$. Then $\Phi$ satisfies the FTC.
\end{exa}

\begin{exa}\label{rotation with complex Pisot number}Let $\Phi=\{\rho x+a_i\}_{i=1}^m$ be an IFS on $\C$ in the complex form, where  $\rho^{-1}$ is a complex Pisot number and $a_i\in\Q(\rho)$ for $1\leq i\leq m$. Then $\Phi$ satisfies the FTC.
\end{exa}

The proof of  Example \ref{rotation with complex Pisot number}  is similar to that of Example \ref{exam: Pisot without rotation}, which we omit.

Theorems \ref{main thm:matrix repn}-\ref{main thm} can also be applied to self-similar measures on the so-called golden gaskets studied in \cite{BMS04}. Indeed, more generally, let $\Phi=\{S_i(x)=\lambda x+(1-\lambda)a_i\}_{i=1}^3$ be an IFS on $\R^2$, where $\lambda^{-1}$ is a Pisot number, and $a_1,a_2,a_3\in \R^2$ are non-collinear points.\footnote{The self-similar set generated by $\Phi$ is called a golden gasket in \cite{BMS04} if $\lambda^{-1}$ is a multinacci number and  $a_1, a_2, a_3$ are  vertices of an equilateral triangle.}
  Then there exists an invertible affine transformation $f$ on $\R^2$ such that $$\{f\circ S_i\circ f^{-1}\}_{i=1}^3=\{\lambda x, \lambda x+(1,0), \lambda x+(0,1)\},$$ 
 see \cite[Section 8, Remark (1)]{BMS04}.  Then by Example \ref{exa1}  the IFS $\{f\circ S_i\circ f^{-1}\}_{i=1}^3$ satisfies the FTC. It then follows easily from the definition that $\Phi$ itself satisfies the FTC.

The $L^q$-spectrum  is one of the basic ingredients in multifractal analysis. There is a  well-known heuristic relation between the $L^q$-spectrum and the {\em dimension spectrum} of a measure called the {\em multifractal formalism}; see \cite{falconer1,pesin} for the definitions and detailed properties of these notions.  There have been a lot of studies on  the multifractal formalism for self-conformal (including self-similar) and self-affine measures, see e.g. \cite{cm,olsen,LauNgai,feng2,fenglau2,barralfeng,BF20}. 

For a self-similar measure  generated by an IFS satisfying the {\em open set condition} (OSC) \cite{hutchinson}, it is well-known that its $L^q$-spectrum $\tau(q)$ is given by a precise  formula and is analytic on $\mathbb{R}$ (cf. \cite{cm}). For a general self-similar measure, it is known \cite{ps} that the limit in \eqref{eqdefLq} always exists for $q\geq0$. However, without  the OSC, it is generally difficult to obtain a formula for $\tau(q)$; see \cite{LN98,LN99,LN00,feng1,feng05,fenglau2,HHS19} for related works. 
For some self-similar measures,  $\tau$ may not be differentiable at some points. For instance,  Feng \cite{feng05} showed that for the Bernoulli convolution associated with the golden ratio, $\tau$ is differentiable on $\mathbb{R}$ except at one point $q_0<0$. Moreover,   Barral and Feng \cite{barralfeng} showed that for any $q\in(1,2)$, there exists a self-similar measure on $\mathbb{R}$ for which $\tau$ is not differentiable at $q$. According to a recent result of Shmerkin \cite[Theorem 6.6]{Shmerkin19}, the above result also extends to $q\in (1,\infty)$. It remains an interesting question to determine for what classes of self-similar measures, $\tau$ is differentiable on $(0,\infty)$. Only a few results have been obtained on this question. As we have mentioned, Feng  \cite{feng1} proved the differentiability of $\tau$  on $(0,\infty)$ for the self-similar measures on $\R$ generated by the  IFS of the form  \eqref{eq: Feng's model} satisfying \eqref{eq: Feng's FTC}. Recently,  in \cite{dn,NgaiXie2019} the same conclusion was proved  for certain self-similar measures on $\R^d$ that satisfy the {\em generalized finite type condition} (GFTC) and that are of {\em essentially finite type} (EFT), respectively. Below we make some comments on how  the assumptions in \cite{dn,NgaiXie2019} are related to that  in  this paper. 

 It is known that IFSs satisfying our definition of the FTC satisfy the GFTC; see \cite[Theorem 1.1]{denglaonagi13}. However, the key of the approach used in \cite{dn} is to  construct an infinite graph-directed IFS satisfying the OSC from a finite IFS, which relies  heavily on the particular structure of the corresponding self-similar set; see \cite[Subsection 4.2]{dn}. Thus (and as mentioned in \cite[Section 9]{dn}) it is not clear whether the method in \cite{dn} can be extended to cover other IFSs satisfying the GFTC, including those studied by Feng \cite{feng1}.  As for the condition  EFT used in \cite{NgaiXie2019}, it is known that for the special case of Example \ref{exa1} when $\mu$ is the Bernoulli convolution associated with the golden ratio,  the EFT is satisfied (cf. \cite[Example 3.2]{NgaiTangXie18}). As shown in the proof of \cite[Example 3.2]{NgaiTangXie18}, this relies on the fact that  $\mu$ satisfies the so called second-order identities (cf. \cite[Equation 1.5]{LN98}). However, to the best of the author's knowledge, for Bernoulli convolutions associated with Pisot numbers other than the golden ratio, no second-order identities have been proved (this situation is also mentioned in the end of the second page of \cite{NgaiTangXie18}), and the EFT has not been verified. Thus it is not clear whether the main result of \cite{NgaiXie2019} (i.e. \cite[Theorem 1.1]{NgaiXie2019}) applies to this case, which is also stated as an unsolved problem in \cite[Section 6]{NgaiXie2019}.

In the study of self-similar sets and measures,  extending results from $\R$ to $\R^d$ can often be difficult when the OSC fails. One reason is that unlike the case in $\R$, in $\R^d$ when $d\geq 2$ the orthogonal matrices in the  linear parts of the similitudes in the IFS may cause obstacles; see for instance \cite{hochman}. In the following, we give a description of our method used in this paper in extending Feng's work \cite{feng1} from $\R$ to $\R^d$.

Our general strategy to prove the main results  is analogous to that of \cite[Theorem 1.1]{feng1}, but several new ideas play key roles in our situation. First, for a self-similar set $K$ on $\R^d$ generated by an equicontractive IFS  satisfying the FTC, we introduce  the notion of   Borel partitions of $K$, which is analogous to the notion of basic net intervals  in \cite{feng1}. Second,   we introduce a linear order  on the set  of the compositions of the IFS and use it to define  the  characteristic vectors and the symbolic expressions for the elements of the Borel partitions (cf. Section \ref{section: symbolic expression}). This linear order enables us to  avoid possible difficulties caused by the rotations in the IFS.   Third,  in the  proof of  Theorem \ref{main thm},  as in \cite{feng1} our strategy is to  connect  $\tau(q)$ with the pressure function  $P(q)$ for a certain family of squared matrices (cf. Theorem \ref{thm: t(q) and P(q)}); see Subsection \ref{section: proof of main thm} for the definition of $P(q)$. Then Theorem \ref{main thm} follows from a result of Feng and Lau \cite{fenglau1} which states that  $P(q)$ is differentiable on $(0, \infty)$ under the condition that the sum of these  matrices is  irreducible.  A key difference is that, to verify the irreducibility condition, we make use of the Borel density lemma  (cf.   Lemma \ref{lem: mu(E)=1}), which is not necessary in the one dimensional case studied in \cite{feng1} due to the fact that the interior of each basic net interval intersects $K$. 

Our method can be slightly  extended to the  more general case that the IFS is commensurable and satisfies a more general form of  the FTC; see Theorem  \ref{mainthm:com case}.  Since the proof of Theorem \ref{mainthm:com case} uses essentially the same ideas of that of Theorems \ref{main thm:matrix repn}-\ref{main thm}, we will first prove  Theorems \ref{main thm:matrix repn}-\ref{main thm}, and then  we point out in Section \ref{sec:comensurable case} the  modifications needed in the proofs of  Theorems \ref{main thm:matrix repn}-\ref{main thm}  to prove Theorem \ref{mainthm:com case}.

As we have mentioned, for a self-similar measure $\mu$ on $\R$ satisfying the FTC, Feng's result in \cite{feng1} has been applied in \cite{FLS20} to give a checkable criterion for the absolute continuity of $\mu$, and in \cite{HHM16,HHN18,HHSi18} to study the topological structure of the set of local dimensions of $\mu$. It would be interesting to see if our method in this paper can be applied to these topics, which however is beyond the scope of the present paper. 

The rest of this paper is organized as follows. Throughout Sections \ref{section: symbolic expression}-\ref{section: squared matrices}, we let $\Phi=\{S_i\}_{i=1}^m$ be an equicontractive IFS on $\R^d$  satisfying the FTC,  $K$ be the self-similar set generated by $\Phi$  and $\mu$ be the self-similar measure generated by  $\Phi$ and  a probability vector  $(p_1,\ldots, p_m)$.   In Section \ref{section: symbolic expression}, we define  the characteristic vector and symbolic expression for each element of $\xi_n$ ($n\geq 0$). In Section \ref{section: distribution of mu}, we prove Theorem \ref{main thm:matrix repn}. Section \ref{section: squared matrices} is devoted to the proof of Theorem \ref{main thm}. In Section \ref{sec:comensurable case}, we show how to  modify the developments in Sections  \ref{section: symbolic expression}-\ref{section: squared matrices}   to prove Theorem \ref{mainthm:com case}.

\section{The characteristic vectors and symbolic expressions of  \texorpdfstring{$\Delta\in\xi_n$}{Delta}}\label{section: symbolic expression}

For $n\geq 0$, let  the mapping $\Lambda_n$, and the $n$-th Borel partition $\xi_n$ of $K$  be  defined as in \eqref{the map Lambda_n0} and \eqref{definexin}, respectively. It is clear that $\Lambda_n$ takes a constant value on each $\Delta\in \xi_n$, which we denote by $\Lambda_n(\Delta)$. Define the ``neighbor" of $\Lambda_n(\Delta)$ by
\begin{equation*}\label{DefNnD}
N_n(\Delta)=\left\{S_I: I\in\Sigma_n \text{ with } S_I(K)\cap \left(\bigcap_{f\in\Lambda_n(\Delta)}f(K)\right)\neq \emptyset \right\}.
\end{equation*}
Clearly, $\Lambda_n(\Delta)\subseteq N_n(\Delta)$. Moreover,  $\Delta$ is determined by $\Lambda_n(\Delta)$ and $N_n(\Delta)$, since 
\begin{align}
\Delta&=\left(\bigcap_{f\in\Lambda_n(\Delta)}f(K)\right)\backslash\left(\bigcup_{I\in \Sigma_n:\ S_I\notin\Lambda_n(\Delta)}S_I(K)\right)\label{eq: defining property} \\
&=\left(\bigcap_{f\in\Lambda_n(\Delta)}f(K)\right)\backslash\left(\bigcup_{g\in N_n(\Delta)\backslash\Lambda_n(\Delta)}g(K)\right). \label{eqDeltaeqleft}
\end{align}

In the following, we present some basic properties of  $\xi_n$, $\Lambda_n(\Delta)$ and $N_n(\Delta)$. First, notice that the elements of $\xi_n$ are precisely the atoms of the algebra generated by $\{S_I(K): I\in \Sigma_n\}$ (see e.g. \cite[p.86, p.115]{Dudley} for the definitions of an algebra of sets and its atoms). Hence we have the following two lemmas.  

\begin{lem}\label{lemcharac}
Let $n\geq 0$, $\mathcal{U}\subseteq\{S_I: I\in\Sigma_n\}$ and $$\mathcal{V}=\left\{S_J: J\in \Sigma_n \text{ with } S_J(K)\cap\left(\bigcap_{f\in\mathcal{U}} f(K)\right)\neq \emptyset\right\}.$$ Set $\Delta=\left(\bigcap_{f\in\mathcal{U}}f(K)\right)\backslash\left(\bigcup_{g\in\mathcal{V}\backslash\mathcal{U}}g(K)\right)$. Then $\Delta\in\xi_n$ if and only if $\Delta\neq\emptyset$. In the case when $\Delta\neq\emptyset$, we have $\Lambda_n(\Delta)=\mathcal{U}$ and $N_n(\Delta)=\mathcal{V}$.
\end{lem}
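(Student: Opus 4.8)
The plan is to use the characterization of $\xi_n$ stated just before the lemma: the elements of $\xi_n$ are exactly the nonempty atoms of the finite Boolean algebra $\mathcal{A}_n$ generated by $\{S_I(K):I\in\Sigma_n\}$. Recall that an atom of such an algebra is a nonempty set of the form $\bigcap_{I\in\Sigma_n} A_I$, where each $A_I$ is either $S_I(K)$ or its complement $K\setminus S_I(K)$ (intersected with the ambient $K$), and that every point $x\in K$ lies in a unique atom, namely $\bigl(\bigcap_{x\in S_I(K)} S_I(K)\bigr)\setminus\bigl(\bigcup_{x\notin S_I(K)} S_I(K)\bigr)$.

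First I would rewrite the given set $\Delta$ in ``atom form.'' Since $\mathcal{V}$ collects exactly those $S_J$ whose image meets $\bigcap_{f\in\mathcal{U}}f(K)$, any $S_I\notin\mathcal{V}$ has $S_I(K)\cap\bigcap_{f\in\mathcal{U}}f(K)=\emptyset$, so removing $\bigcup_{g\in\mathcal{V}\setminus\mathcal{U}}g(K)$ from $\bigcap_{f\in\mathcal{U}}f(K)$ is the same as removing $\bigcup_{I\in\Sigma_n:\,S_I\notin\mathcal{U}}S_I(K)$; that is,
\[
\Delta=\Bigl(\bigcap_{f\in\mathcal{U}}f(K)\Bigr)\setminus\Bigl(\bigcup_{I\in\Sigma_n:\,S_I\notin\mathcal{U}}S_I(K)\Bigr),
\]
which is precisely the generic atom attached to the ``on/off'' pattern $\mathcal{U}$. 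Consequently $\Delta$ is either empty or an atom of $\mathcal{A}_n$, hence $\Delta\in\xi_n$ iff $\Delta\neq\emptyset$; this gives the main equivalence.

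For the second assertion, assume $\Delta\neq\emptyset$ and pick $x\in\Delta$. From the atom form above, $x\in S_I(K)$ forces $S_I\in\mathcal{U}$, and $S_I\in\mathcal{U}$ gives $x\in f(K)$ for all $f\in\mathcal{U}$, so the set $\{S_I:x\in S_I(K)\}$ equals $\mathcal{U}$; since this set is by definition $\Lambda_n(x)=\Lambda_n(\Delta)$, we get $\Lambda_n(\Delta)=\mathcal{U}$. Then $\bigcap_{f\in\Lambda_n(\Delta)}f(K)=\bigcap_{f\in\mathcal{U}}f(K)$, so the definitions of $N_n(\Delta)$ and of $\mathcal{V}$ coincide verbatim, giving $N_n(\Delta)=\mathcal{V}$.

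The only genuine point requiring care — the ``main obstacle,'' though it is minor — is the first rewriting step: one must check that enlarging the subtracted union from $\bigcup_{g\in\mathcal{V}\setminus\mathcal{U}}g(K)$ to $\bigcup_{S_I\notin\mathcal{U}}S_I(K)$ does not remove any point of $\bigcap_{f\in\mathcal{U}}f(K)$, which is exactly the content of the definition of $\mathcal{V}$. Once that identification is in hand, everything else is the standard bookkeeping of atoms in a finite algebra generated by a finite family of sets, so I would keep the write-up short and simply cite \cite[pp.~86, 115]{Dudley} for the atom formalism.
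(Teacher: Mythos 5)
Your proof is correct and follows exactly the route the paper intends: the paper states this lemma without proof, as an immediate consequence of the observation that the elements of $\xi_n$ are the nonempty atoms of the algebra generated by $\{S_I(K):I\in\Sigma_n\}$, and your write-up simply makes that reduction explicit (including the one point needing care, that enlarging the subtracted union to all $S_I\notin\mathcal{U}$ changes nothing by the definition of $\mathcal{V}$). No issues.
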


\begin{lem}\label{lem2-3}
Let $n\geq 0$ and $\mathcal{U},\mathcal{V}\subseteq\{S_I:I\in\Sigma_n\}$. If $\left(\bigcap_{f\in\mathcal{U}}f(K)\right)\backslash\left(\bigcup_{g\in\mathcal{V}}g(K)\right)$ is non-empty, then it 
is a union of some elements of $\xi_n$.
\end{lem}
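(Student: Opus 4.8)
The plan is to deduce Lemma \ref{lem2-3} from the observation already recorded above: the elements of $\xi_n$ are precisely the atoms of the (finite) Boolean algebra $\mathcal{A}_n$ of subsets of $K$ generated by the finite family $\{S_I(K)\cap K: I\in\Sigma_n\}$ (equivalently, by $\{S_I(K): I\in\Sigma_n\}$, since each $S_I(K)\subseteq K$). In a finite algebra of sets, every member is a disjoint union of atoms, and the empty set is the union of the empty collection of atoms; so it suffices to show that the set in question belongs to $\mathcal{A}_n$.

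First I would note that $\bigcup_{g\in\mathcal{V}}g(K)\in\mathcal{A}_n$ as a finite union of generators (using $\mathcal{V}\subseteq\{S_I:I\in\Sigma_n\}$), and likewise $\bigcap_{f\in\mathcal{U}}f(K)\in\mathcal{A}_n$ as a finite intersection of generators; here one uses that $\mathcal{A}_n$ is closed under finite unions, finite intersections, and relative complements within the ambient set $K$. Hence
\[
E:=\Bigl(\bigcap_{f\in\mathcal{U}}f(K)\Bigr)\Bigm\backslash\Bigl(\bigcup_{g\in\mathcal{V}}g(K)\Bigr)\in\mathcal{A}_n.
\]
Since $\mathcal{A}_n$ is finite, $E$ is the union of all atoms of $\mathcal{A}_n$ contained in $E$; equivalently $E=\bigcup\{\Delta\in\xi_n:\Delta\subseteq E\}$. (A small point to handle with a word or two: when $\mathcal{U}=\emptyset$ one reads $\bigcap_{f\in\mathcal{U}}f(K)=K$, and when $\mathcal{V}=\emptyset$ one reads the subtracted union as $\emptyset$; in either degenerate case $E\in\mathcal{A}_n$ still holds.) Therefore, whenever $E\neq\emptyset$, it is a nonempty union of elements of $\xi_n$, which is exactly the assertion of the lemma.

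I do not anticipate a genuine obstacle here; this is essentially a bookkeeping consequence of the atom structure of a finite algebra of sets, and the only thing to be careful about is making the identification ``$\xi_n=$ atoms of $\mathcal{A}_n$'' precise, which the paragraph preceding the lemma has already granted (citing \cite{Dudley}). If one preferred an entirely self-contained argument avoiding the language of algebras, the alternative is to fix $x\in E$, consider $\Delta:=\Lambda_n^{-1}(\Lambda_n(x))\in\xi_n$, and check directly from \eqref{eq: defining property} that $\Delta\subseteq E$: indeed every $f\in\Lambda_n(\Delta)=\Lambda_n(x)$ satisfies $\Delta\subseteq f(K)$, so $\Delta\subseteq\bigcap_{f\in\mathcal{U}}f(K)$ because $\mathcal{U}\subseteq\Lambda_n(x)$ (as $x\in\bigcap_{f\in\mathcal{U}}f(K)$ forces each $f\in\mathcal{U}$ to lie in $\Lambda_n(x)$); and for $g=S_J\in\mathcal{V}$ with $S_J\notin\Lambda_n(x)$ we have $\Delta\cap g(K)=\emptyset$ by \eqref{eq: defining property}, while $g\in\mathcal{V}\cap\Lambda_n(x)$ would force $x\in g(K)$, contradicting $x\in E$. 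Hence $\Delta\subseteq E$, and ranging over $x\in E$ exhibits $E$ as a union of elements of $\xi_n$. Either route closes the proof; I would present the short algebra-of-sets version as the main argument.
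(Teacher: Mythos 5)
Your proposal is correct and follows essentially the same route as the paper, which gives no separate proof of this lemma but derives it directly from the preceding observation that the elements of $\xi_n$ are the atoms of the algebra generated by $\{S_I(K):I\in\Sigma_n\}$ — exactly the algebra-of-sets argument you present as your main line. The self-contained fallback via $\Lambda_n^{-1}(\Lambda_n(x))$ is a correct bonus but not needed.
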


The following lemma shows that $\{\xi_n\}_{n=0}^{\infty}$ has a net structure. 

\begin{lem}\label{lem: xi(n+1) refines xi(n)}
{\rm (i)} For any $n\geq 0$, $\xi_{n+1}$ refines $\xi_n$. That is,  each element of $\xi_{n+1}$ is a subset of an  element of $\xi_n$.

{\rm (ii)} Moreover, given $\Delta\in\xi_{n+1}$ let $\widehat{\Delta}\in\xi_{n}$ such that  $\Delta\subseteq\widehat{\Delta}$, then 
\[\Lambda_n(\widehat{\Delta})=\left\{S_I: I\in\Sigma_n, \exists S\in\Phi \text{ such that }S_I\circ S\in \Lambda_{n+1}(\Delta)\right\}. \]
\end{lem}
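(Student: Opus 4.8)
The plan is to deduce both statements from the basic fact (already noted before Lemma \ref{lemcharac}) that the elements of $\xi_n$ are exactly the atoms of the algebra $\A_n$ generated by $\{S_I(K): I\in\Sigma_n\}$, together with the self-similarity relation $S_I(K)=\bigcup_{i=1}^m S_I\circ S_i(K)$. For part (i), I would first observe that every generator $S_I(K)$ of $\A_n$ (with $I\in\Sigma_n$) lies in $\A_{n+1}$, since $S_I(K)=\bigcup_{i=1}^m S_{Ii}(K)$ is a finite union of generators of $\A_{n+1}$; hence $\A_n\subseteq\A_{n+1}$. A standard fact about algebras of sets then gives that each atom of $\A_{n+1}$ is contained in a (unique) atom of $\A_n$ — concretely, given $\Delta\in\xi_{n+1}$ pick any $x\in\Delta$ and let $\widehat\Delta\in\xi_n$ be the unique element with $x\in\widehat\Delta$; since $\Delta$ and $\widehat\Delta$ are both intersections of generators/complements of generators and $\Delta$ cannot be split by any set of $\A_n\subseteq\A_{n+1}$, we get $\Delta\subseteq\widehat\Delta$. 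This is routine and I would keep it brief.

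For part (ii), fix $\Delta\in\xi_{n+1}$ and $\widehat\Delta\in\xi_n$ with $\Delta\subseteq\widehat\Delta$, and write $x$ for an arbitrary point of $\Delta$ (so also $x\in\widehat\Delta$). The key identity to establish is
\[
\Lambda_n(\widehat\Delta)=\{S_I : I\in\Sigma_n,\ \exists\, S\in\Phi \text{ with } S_I\circ S\in\Lambda_{n+1}(\Delta)\}.
\]
For the inclusion ``$\supseteq$'': if $S_I\circ S_i\in\Lambda_{n+1}(\Delta)$ for some $i$, then $x\in S_{Ii}(K)\subseteq S_I(K)$, so $S_I\in\Lambda_n(x)=\Lambda_n(\widehat\Delta)$ by \eqref{the map Lambda_n0}. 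For the inclusion ``$\subseteq$'': if $S_I\in\Lambda_n(\widehat\Delta)$, i.e. $x\in S_I(K)$, then by the self-similarity relation $x\in S_I(K)=\bigcup_{i=1}^m S_I\circ S_i(K)$, so there is some $i$ with $x\in S_I\circ S_i(K)$, which means $S_I\circ S_i=S_{Ii}\in\Lambda_{n+1}(x)=\Lambda_{n+1}(\Delta)$. Both directions are immediate once one unwinds the definition \eqref{the map Lambda_n0}, so the real content is simply recognizing that $\Lambda_n(x)$, not just $\xi_n$, is constant on $\widehat\Delta$ — which is exactly how $\xi_n$ was defined via \eqref{definexin}.

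The only place requiring a little care — and the step I'd flag as the main (mild) obstacle — is the passage in part (i) from ``$\A_n\subseteq\A_{n+1}$'' to ``each $\Delta\in\xi_{n+1}$ is contained in some element of $\xi_n$''. One must use that $\xi_n$ is a \emph{partition} of $K$ and that $\Delta\in\A_{n+1}\supseteq\A_n$ means $\Delta$ meets at most one atom of $\A_n$: if $\Delta$ met two distinct elements $\widehat\Delta_1,\widehat\Delta_2\in\xi_n$, then $\Delta\cap\widehat\Delta_1\in\A_{n+1}$ would be a proper nonempty subset of the atom $\Delta$, contradicting that $\Delta$ is an atom of $\A_{n+1}$. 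Invoking the general fact about atoms of a subalgebra (reference \cite[p.~86, p.~115]{Dudley}, already cited) lets one dispatch this cleanly. Everything else is bookkeeping with \eqref{the map Lambda_n0}, \eqref{definexin}, and Hutchinson's identity $K=\bigcup_{i=1}^m S_i(K)$.
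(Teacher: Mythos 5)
Your proposal is correct, and it reaches the lemma by a genuinely different route from the paper's in both parts. The paper does not argue abstractly with atoms here: it explicitly constructs the candidate parent $\widehat{\Delta}$ from the set $\mathcal{U}=\{S_I: I\in\Sigma_n,\ \exists S\in\Phi \text{ with } S_I\circ S\in\Lambda_{n+1}(\Delta)\}$ and its associated neighbour set $\mathcal{V}$, establishes the two set-level inclusions $\bigcap_{h\in\Lambda_{n+1}(\Delta)}h(K)\subseteq\bigcap_{f\in\mathcal{U}}f(K)$ and $\bigcup_{I\in\Sigma_{n+1}:\,S_I\notin\Lambda_{n+1}(\Delta)}S_I(K)\supseteq\bigcup_{g\in\mathcal{V}\setminus\mathcal{U}}g(K)$ to get $\Delta\subseteq\widehat{\Delta}$, and then invokes Lemma \ref{lemcharac} to conclude simultaneously that $\widehat{\Delta}\in\xi_n$ and $\Lambda_n(\widehat{\Delta})=\mathcal{U}$ --- so (i) and (ii) fall out of a single construction. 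You decouple the two statements: (i) follows from the algebra inclusion $\A_n\subseteq\A_{n+1}$ (valid, since $S_I(K)=\bigcup_{i}S_{Ii}(K)$) together with the standard fact that atoms of a finer algebra refine atoms of a coarser one, and (ii) is then a pointwise unwinding of \eqref{the map Lambda_n0} at an arbitrary $x\in\Delta$, using that $\Lambda_{n+1}$ is constant on $\Delta$ and $\Lambda_n$ is constant on $\widehat{\Delta}$. Both arguments ultimately rest on the same two observations ($h(K)\subseteq S_I(K)$ whenever $h=S_I\circ S$, and $S_I(K)=\bigcup_i S_I\circ S_i(K)$), but your pointwise treatment of (ii) is lighter, whereas the paper's construction has the advantage of producing $\widehat{\Delta}$ together with its $\Lambda_n$-value in one stroke, without needing to know in advance that a parent exists. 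Your flagged step --- that an atom of $\A_{n+1}$ cannot meet two atoms of $\A_n$ --- is handled correctly by the minimality argument, so there is no gap.
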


\begin{proof} Let $n\geq 0$ and $\Delta\in\xi_{n+1}$.  Set
\begin{equation}\label{defmathcalU}
\mathcal{U}=\left\{S_I: I\in\Sigma_n, \exists S\in\Phi \text{ such that }S_I\circ S\in \Lambda_{n+1}(\Delta)\right\},
\end{equation}
$$\mathcal{V}=\left\{S_J: J\in \Sigma_n \text{ with } S_J(K)\cap\left(\bigcap_{f\in\mathcal{U}} f(K)\right)\neq \emptyset\right\}.$$
Define
\begin{equation}\label{eq: net property0}
\widehat{\Delta}=\left(\bigcap_{f\in\mathcal{U}}f(K)\right)\backslash\left(\bigcup_{g\in\mathcal{V}\backslash\mathcal{U}}g(K)\right).
\end{equation}
Below we show that  $\widehat{\Delta}\in\xi_n$, $\Delta\subseteq\widehat{\Delta}$ and $\Lambda_{n}(\widehat{\Delta})=\mathcal{U}$, which will prove the lemma. 

For every $h\in \Lambda_{n+1}(\Delta)$, we have $h=S_I\circ S$ for some $I\in \Sigma_n$ and $S\in \Phi$, in which case $h(K)=S_I\circ S(K)\subseteq S_I(K)$. By this fact and \eqref{defmathcalU} we see that
\begin{equation}\label{eq: net structure1}\bigcap_{h\in\Lambda_{n+1}(\Delta)}h(K)\subseteq\bigcap_{f\in \mathcal{U}}f(K).
\end{equation}
On the other hand,  \eqref{defmathcalU} also implies that $g\circ S\notin \Lambda_{n+1}(\Delta)$ for all   $g\in\mathcal{V}\setminus\mathcal{U}$ and $S\in\Phi$. Since $g(K)=\bigcup_{i=1}^mg\circ S_i(K)$ for every $g\in \mathcal{V}\setminus\mathcal{U}$, it follows that 
\begin{equation}\label{eq: net structure2}
\bigcup_{I\in \Sigma_{n+1}:\ S_I\notin \Lambda_{n+1}(\Delta)}S_I(K)\supseteq \bigcup_{g\in \mathcal{V}\setminus\mathcal{U}}g(K).
\end{equation}
Recall that 
\begin{equation}\label{eq:Delta n+1}
\Delta=\left(\bigcap_{h\in \Lambda_{n+1}(\Delta)}h(K)\right)\backslash \left(\bigcup_{I\in \Sigma_{n+1}:\ S_I\notin \Lambda_{n+1}(\Delta)}S_I(K)\right).
\end{equation} 
Now by \eqref{eq: net property0}-\eqref{eq:Delta n+1} we see  that  $\Delta\subseteq \widehat{\Delta}$. Hence $\widehat{\Delta}\neq \emptyset$ as  $\Delta\neq\emptyset$. It then follows from Lemma  \ref{lemcharac} that   $\widehat{\Delta}\in \xi_n$ and $\Lambda_n(\widehat{\Delta})=\mathcal{U}$. This proves the lemma. \qed 
\end{proof}

  Recall that $\mathcal{S}$ is the set of similitudes on $\R^d$. Here and afterwards, for $g\in\mathcal{S}$, $A\subset \mathcal{S}$ and a vector $V=(f_1,\ldots,f_n)$ with all $f_i\in\mathcal{S}$, we write $g\circ A=\{g\circ f: f\in A\}$ and $g\circ V=(g\circ f_1,\ldots, g\circ f_n)$.

  Let $\#A$ denote the cardinality of a set $A$. The following result is a direct consequence of the FTC.
\begin{lem}\label{lem: FTC}
For any $n\in\N$ and $\Delta\in \xi_n$,  $\#\Lambda_n(\Delta)\leq \#N_n(\Delta)\leq \#\Gamma$, where $\Gamma$ is given as in Definition \ref{deFTC}.
\end{lem}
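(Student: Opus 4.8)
The plan is to deduce this directly from the finite type condition together with the basic structural facts already established. The first inequality $\#\Lambda_n(\Delta)\leq\#N_n(\Delta)$ is immediate, since $\Lambda_n(\Delta)\subseteq N_n(\Delta)$ by the observation made right after the definition of $N_n(\Delta)$. So the real content is the bound $\#N_n(\Delta)\leq\#\Gamma$.

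For the second inequality, I would fix $n\in\N$ and $\Delta\in\xi_n$, and pick any $S_{I_0}\in\Lambda_n(\Delta)$ (this set is non-empty because $\Delta\neq\emptyset$ forces $\Delta\subseteq S_{I_0}(K)$ for some $I_0\in\Sigma_n$). The idea is to inject $N_n(\Delta)$ into $\Gamma$ via the map $S_I\mapsto S_{I_0}^{-1}\circ S_I$. For this to land in $\Gamma$, I need: (a) for every $S_I\in N_n(\Delta)$, the sets $S_{I_0}(K)$ and $S_I(K)$ have non-empty intersection, so that the FTC alternative \eqref{SFTC} yields $S_{I_0}^{-1}\circ S_I\in\Gamma$; and (b) the map is injective, i.e. distinct $S_I,S_J\in N_n(\Delta)$ give distinct $S_{I_0}^{-1}\circ S_I$ and $S_{I_0}^{-1}\circ S_J$ — but this is clear since $S_{I_0}^{-1}$ is a bijection of $\R^d$, so $S_{I_0}^{-1}\circ S_I=S_{I_0}^{-1}\circ S_J$ forces $S_I=S_J$ as maps. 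Point (a) is where I use the definition of $N_n(\Delta)$: if $S_I\in N_n(\Delta)$, then $S_I(K)\cap\bigcap_{f\in\Lambda_n(\Delta)}f(K)\neq\emptyset$, and since $S_{I_0}\in\Lambda_n(\Delta)$ this intersection is contained in $S_{I_0}(K)$; hence $S_I(K)\cap S_{I_0}(K)\neq\emptyset$. Then by \eqref{SFTC} (with $J$ replaced by $I$ and $I$ by $I_0$, both in $\Sigma_n$), since $S_{I_0}(K)\cap S_I(K)\neq\emptyset$, we must have $S_{I_0}^{-1}\circ S_I\in\Gamma$.

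Putting these together, $S_I\mapsto S_{I_0}^{-1}\circ S_I$ is an injection from $N_n(\Delta)$ into $\Gamma$, whence $\#N_n(\Delta)\leq\#\Gamma$, which completes the proof.

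I do not anticipate any genuine obstacle here — the statement is essentially a reformulation of the FTC, and the only mild subtlety is remembering to anchor at a fixed $S_{I_0}\in\Lambda_n(\Delta)$ (rather than an arbitrary element of $N_n(\Delta)$) so that condition (a) can be extracted from the defining intersection property of $N_n(\Delta)$. One should also note that if $\Delta\neq\emptyset$ then $\Lambda_n(\Delta)\neq\emptyset$, so such an $S_{I_0}$ exists; this is immediate from \eqref{eq: defining property}, since an empty $\Lambda_n(\Delta)$ would make the first intersection an empty intersection and the argument of the set-difference ill-posed, whereas in fact every $x\in\Delta\subseteq K$ lies in some $S_I(K)$ with $I\in\Sigma_n$ and that $S_I$ belongs to $\Lambda_n(\Delta)=\Lambda_n(x)$.
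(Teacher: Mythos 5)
Your proposal is correct and follows essentially the same route as the paper: fix an element $f=S_{I_0}\in\Lambda_n(\Delta)$ and observe that $f^{-1}\circ N_n(\Delta)\subseteq\Gamma$ by the FTC, with injectivity of precomposition by $f^{-1}$ giving the cardinality bound. You merely spell out in more detail the two points (non-empty intersection with $S_{I_0}(K)$ and injectivity) that the paper leaves implicit.
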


\begin{proof}
Let $n\in\N$ and $\Delta\in \xi_n$. The first inequality is clear as  $\Lambda_n(\Delta)\subseteq N_n(\Delta)$. To see the second inequality,  fix  $f\in\Lambda_n(\Delta)$. Then by \eqref{SFTC} and   the definition of $N_n(\Delta)$, we see that $f^{-1}\circ N_n(\Delta)\subseteq\Gamma.$
Hence  $\#N_n(\Delta)\leq \#\Gamma$. \qed
\end{proof}

In the rest of this section, we define for each $\Delta\in\xi_n$ ($n\geq 0$) its characteristic vector and symbolic expression. To this end, we first   introduce a  linear order on $\Phi_*:=\{S_I: I\in\Sigma_*\}$, which enables us to rewrite $\Lambda_n(\Delta)$ and $N_n(\Delta)$ as ordered vectors and  is important for our further analysis (cf. Lemma \ref{lemporder}). 

Let $\leqslant_{\rm lex}$ be the lexicographic order on $\Sigma_*$. That is, for   $I=i_1\ldots i_k,$ $J=j_1\ldots j_{\ell}\in \Sigma_*$,  $I\leqslant_{\rm lex} J$ if and only if  either $I$ is a prefix of $J$, or there exists  $1\leq s\leq \min\{k,\ell\}$ such that $i_1=j_1, \ldots,  i_{s-1}=j_{s-1}$  and  $i_s<j_s$. 
Write $I<_{\rm lex} J$ if $I\leqslant_{\rm lex} J$ and $I\neq J$.  It is easy to check that $\leqslant_{\rm lex}$  is  a linear order on $\Sigma_*$. Moreover, $\leqslant_{\rm lex}$ satisfies  the following property, which  is obvious from the definition. 

\begin{lem}\label{lembpoforder}
Let $n\geq 0$ and $I=i_1\ldots i_{n+1}, J=j_1\ldots j_{n+1}\in \Sigma_{n+1}$. Then $I<_{\rm lex} J$ if and only if either $i_1\ldots i_n <_{\rm lex} j_1\ldots j_n$, or $i_1\ldots i_n=j_1\ldots j_n$ and $i_{n+1}<j_{n+1}$. 
\end{lem}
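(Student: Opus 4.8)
The statement to prove is Lemma~\ref{lembpoforder}, which characterizes the lexicographic order on $\Sigma_{n+1}$ in terms of the order on $\Sigma_n$ together with the last symbol. This is essentially an unwinding of the definition of $\leqslant_{\rm lex}$, so the proof plan is short and elementary.

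\textbf{Approach.} The plan is to prove both implications directly from the definition of $<_{\rm lex}$ given just above the statement. Write $I=i_1\ldots i_{n+1}$ and $J=j_1\ldots j_{n+1}$, and set $I'=i_1\ldots i_n$, $J'=j_1\ldots j_n$ (their length-$n$ prefixes). Since $I,J$ have the same length $n+1$, neither is a prefix of the other unless $I=J$; hence for distinct $I,J$ of equal length, $I<_{\rm lex}J$ means precisely that there is $1\le s\le n+1$ with $i_1=j_1,\ldots,i_{s-1}=j_{s-1}$ and $i_s<j_s$.

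\textbf{Key steps.} First, for the forward direction, suppose $I<_{\rm lex}J$ and let $s$ be as above. If $s\le n$, then the same index $s$ witnesses $I'<_{\rm lex}J'$ (again using that $I',J'$ have equal length $n$, so this is the ``strict disagreement'' clause), giving the first alternative. If $s=n+1$, then $i_1=j_1,\ldots,i_n=j_n$, i.e. $I'=J'$, and $i_{n+1}<j_{n+1}$, giving the second alternative. Conversely, if $I'<_{\rm lex}J'$ there is $s\le n$ with $i_1=j_1,\ldots,i_{s-1}=j_{s-1}$, $i_s<j_s$; the same $s$ shows $I<_{\rm lex}J$. And if $I'=J'$ with $i_{n+1}<j_{n+1}$, then $s=n+1$ witnesses $I<_{\rm lex}J$. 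In all cases $I\ne J$, so indeed $I<_{\rm lex}J$. One should also note the two alternatives on the right-hand side are mutually exclusive and jointly cover all ways $I$ and $J$ can differ, so the equivalence is clean.

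\textbf{Main obstacle.} There is essentially no obstacle here; the only point requiring a little care is handling the ``prefix'' clause in the definition of $\leqslant_{\rm lex}$ correctly for words of equal length (where it degenerates to equality), so that the comparison reduces to the single ``first disagreement'' index $s$. Everything else is bookkeeping on the index $s$, splitting on whether $s\le n$ or $s=n+1$.
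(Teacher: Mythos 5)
Your proof is correct, and it is just the careful unwinding of the definition of $<_{\rm lex}$ that the paper itself omits, stating only that the lemma ``is obvious from the definition.'' The case split on the first disagreement index $s\le n$ versus $s=n+1$, together with the observation that the prefix clause degenerates to equality for words of equal length, is exactly the intended argument.
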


Using  $\leqslant_{\rm lex} $ on $\Sigma_*$ we define a linear order $\preccurlyeq $ on $\Phi_*$ as follows: Let $\omega: \Phi_*\to \Sigma_*$ be the mapping which assigns each $f\in \Phi_*$ the minimal  element of $\{I\in \Sigma_*: S_I=f\}$ under $\leqslant_{\rm lex} $. Notice that $\omega$ is well-defined since $\leqslant_{\rm lex} $ is linear.   For $f, g\in\Phi_*$,  write $f\preccurlyeq g$ ($f\prec g$)  if $\omega(f)\leqslant_{\rm lex}  \omega(g)$ ($\omega(f)<_{\rm lex}\omega(g)$, respectively). It is clear that  $\preccurlyeq$ is a linear order on $\Phi_*$.

From now on, for $n\geq 0$ and  $\Delta\in\xi_n$ with $\Lambda_n(\Delta)=\{f_i\}_{i=1}^k$ and  $N_n(\Delta)=\{g_j\}_{j=1}^{\ell}$, without loss of generality we assume  that the elements of  $\Lambda_n(\Delta)$ and $N_n(\Delta)$ are ranked increasingly in the order $\prec$, i.e., $f_1\prec\cdots\prec f_k$ and $g_1\prec \cdots\prec g_{\ell}$. Then without causing confusion we will view   $\Lambda_n(\Delta)$ and $N_n(\Delta)$ as ordered vectors
\[\Lambda_n(\Delta)=(f_1,\ldots, f_k), \quad N_n(\Delta)=(g_1,\ldots, g_{\ell}).\]
We define  two vectors $V_n(\Delta)$ and $U_n(\Delta)$ by
\begin{equation*}\label{defVU}
V_n(\Delta)=(\varphi_1,\ldots, \varphi_k), \quad U_n(\Delta)=(\psi_1, \ldots, \psi_{\ell}),
\end{equation*}
where $\varphi_i=f_1^{-1}\circ f_i$ for $1\leq i\leq k$ and $\psi_j=f_1^{-1}\circ g_j$ for $1\leq j\leq \ell$.  By Lemma \ref{lem: FTC} and its proof, we see that $k\leq \ell\leq \#\Gamma$, and  all the entries of $V_n(\Delta)$ and $U_n(\Delta)$ are contained in $\Gamma$.  Hence  $\{(V_n(\Delta), U_n(\Delta)): \Delta\in \xi_n, n\geq 0\}$ is a finite set. 

Next we define for $\Delta$ a similitude $r_n(\Delta)$ on $\R^d$. Let $r_0(K):=id$, the identity map on $\R^d$. If $n\geq 1$, let $\widehat{\Delta}$ denote the unique element of $\xi_{n-1}$ which contains $\Delta$. Assume that $\Lambda_{n-1}(\widehat{\Delta})=(h_1, \ldots, h_{k'})$. Then we define
$r_n(\Delta)=h_1^{-1}\circ f_1$. By  Lemma \ref{lem: xi(n+1) refines xi(n)}(ii),   there exist $S\in \Phi$ and $j\in\{1,\ldots,k\}$ so that $h_1\circ S=f_j$. Since $f_j(K)\cap f_1(K)\neq\emptyset$, the FTC \eqref{SFTC} implies that $f_j^{-1}\circ f_1\in \Gamma$. Hence we have
\[r_n(\Delta)=h_1^{-1}\circ f_1=S\circ f_j^{-1}\circ f_1\in \left\{S_i\circ f: 1\leq i\leq m, f\in \Gamma\right\}.\]
As a consequence, we see that the set $\{r_n(\Delta): \Delta\in \xi_n,  n\geq 0\}$ is  finite. 

Finally, we define a triple
$$\mathcal{C}_n(\Delta)=(V_n(\Delta), U_n(\Delta), r_n(\Delta)),$$
and call it the {\em characteristic vector} of $\Delta$. Let 
$$\Omega=\{\mathcal{C}_n(\Delta): \Delta\in\xi_n, n\geq 0\}.$$
Then by  the above argument,  $\Omega$ is a finite set.

For $n\geq 0$ and $\Delta\in\xi_n$, 
since $\Delta$ is determined by $\Lambda_n(\Delta)$ and $N_n(\Delta)$ (cf. \eqref{eqDeltaeqleft}), $V_n(\Delta)$ and $U_n(\Delta)$ are used to record the shape of $\Delta$. The reason to introduce the term $r_n(\Delta)$ in $\mathcal{C}_n(\Delta)$ is to guarantee that $\mathcal{C}_{n+1}(\Delta_1)\neq \mathcal{C}_{n+1}(\Delta_2)$ whenever $\Delta_1, \Delta_2\in\xi_{n+1}$ are contained in $\Delta$ and $\Delta_1\neq \Delta_2$, as  shown in the following lemma.

\begin{lem}\label{lemDisch}
Given $n\geq 0$ and $\Delta\in\xi_n$ let $\Delta_1, \Delta_2$ be two distinct  elements of $\xi_{n+1}$ that are contained in $\Delta$. Then $\mathcal{C}_{n+1}(\Delta_1)\neq \mathcal{C}_{n+1}(\Delta_2)$.
\end{lem}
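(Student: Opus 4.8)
The plan is to argue by contradiction: suppose $\Delta_1, \Delta_2 \in \xi_{n+1}$ are distinct, both contained in $\Delta$, yet $\mathcal{C}_{n+1}(\Delta_1) = \mathcal{C}_{n+1}(\Delta_2)$. Writing out the three coordinates, this means $V_{n+1}(\Delta_1) = V_{n+1}(\Delta_2)$, $U_{n+1}(\Delta_1) = U_{n+1}(\Delta_2)$, and crucially $r_{n+1}(\Delta_1) = r_{n+1}(\Delta_2)$. Let $\Lambda_{n+1}(\Delta_t) = (f^{(t)}_1, \ldots, f^{(t)}_{k_t})$ and $N_{n+1}(\Delta_t) = (g^{(t)}_1, \ldots, g^{(t)}_{\ell_t})$ for $t = 1,2$. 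Since $\widehat{\Delta_1} = \widehat{\Delta_2} = \Delta$, the definition of $r_{n+1}$ gives $r_{n+1}(\Delta_t) = h_1^{-1} \circ f^{(t)}_1$, where $\Lambda_n(\Delta) = (h_1, \ldots, h_{k'})$ is the \emph{same} ordered vector for both $t$. The equality $r_{n+1}(\Delta_1) = r_{n+1}(\Delta_2)$ therefore forces $h_1^{-1} \circ f^{(1)}_1 = h_1^{-1} \circ f^{(2)}_1$, hence $f^{(1)}_1 = f^{(2)}_1$. Call this common similitude $f_1$.

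Next I would leverage the equalities of $V$ and $U$. By definition $V_{n+1}(\Delta_t) = ((f^{(t)}_1)^{-1} \circ f^{(t)}_i)_i$ and $U_{n+1}(\Delta_t) = ((f^{(t)}_1)^{-1} \circ g^{(t)}_j)_j$. Since $f^{(1)}_1 = f^{(2)}_1 = f_1$, the equality $V_{n+1}(\Delta_1) = V_{n+1}(\Delta_2)$ yields $k_1 = k_2 =: k$ and $f_1^{-1} \circ f^{(1)}_i = f_1^{-1} \circ f^{(2)}_i$ for each $i$, hence $f^{(1)}_i = f^{(2)}_i$; that is, the ordered vectors $\Lambda_{n+1}(\Delta_1)$ and $\Lambda_{n+1}(\Delta_2)$ coincide. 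Similarly $U_{n+1}(\Delta_1) = U_{n+1}(\Delta_2)$ forces $N_{n+1}(\Delta_1) = N_{n+1}(\Delta_2)$ as ordered vectors, and in particular as sets. But then the defining formula \eqref{eqDeltaeqleft},
\[
\Delta_t = \left(\bigcap_{f \in \Lambda_{n+1}(\Delta_t)} f(K)\right) \bigg\backslash \left(\bigcup_{g \in N_{n+1}(\Delta_t) \setminus \Lambda_{n+1}(\Delta_t)} g(K)\right),
\]
shows that $\Delta_1$ and $\Delta_2$ are built from exactly the same data, so $\Delta_1 = \Delta_2$, contradicting the hypothesis. This yields the claim.

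The one point that requires care — and is the main obstacle — is justifying that $\Lambda_n(\Delta)$ is genuinely the same ordered vector $(h_1, \ldots, h_{k'})$ when computing both $r_{n+1}(\Delta_1)$ and $r_{n+1}(\Delta_2)$. This is where the hypothesis that $\widehat{\Delta_1} = \widehat{\Delta_2}$ is essential: by Lemma \ref{lem: xi(n+1) refines xi(n)}(i) each of $\Delta_1, \Delta_2$ is contained in a \emph{unique} element of $\xi_n$, and since both are subsets of the given $\Delta \in \xi_n$, that unique element is $\Delta$ itself in both cases. Hence the first coordinate $h_1$ of the ordered vector $\Lambda_n(\Delta)$ (ordered by $\prec$) is one and the same similitude in both definitions of $r_{n+1}$, and the cancellation $h_1^{-1} \circ f^{(1)}_1 = h_1^{-1} \circ f^{(2)}_1 \Rightarrow f^{(1)}_1 = f^{(2)}_1$ is legitimate. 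Everything else is the straightforward observation that a similitude is determined by its values, that $\varphi \mapsto f_1 \circ \varphi$ is injective on similitudes, and that the ordering $\prec$ makes "$\Lambda_{n+1}(\Delta_1) = \Lambda_{n+1}(\Delta_2)$ as ordered vectors" equivalent to the corresponding equality of sets, so that \eqref{eqDeltaeqleft} can be applied.
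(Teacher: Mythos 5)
Your proposal is correct and is essentially the contrapositive of the paper's own argument: the paper starts from $\Lambda_{n+1}(\Delta_1)\neq\Lambda_{n+1}(\Delta_2)$ and deduces that either $r_{n+1}$ or $V_{n+1}$ must differ, using exactly the same two observations you do (that $r_{n+1}$ recovers the first entry of $\Lambda_{n+1}$ via the common $h_1$, and that $V_{n+1}$ then recovers the rest). The only cosmetic difference is that the paper invokes \eqref{eq: defining property} so that only the $\Lambda$'s are needed, whereas you use \eqref{eqDeltaeqleft} and therefore also carry the (harmless, equally valid) $U$/$N$ step.
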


\begin{proof}
Since $\Delta_1\neq\Delta_2$ and they are determined by  $\Lambda_{n+1}(\Delta_1)$ and $\Lambda_{n+1}(\Delta_2)$  respectively (cf. \eqref{eq: defining property}), we have $\Lambda_{n+1}(\Delta_1)\neq \Lambda_{n+1}(\Delta_2)$. Let $f, g, h$ be the first entries of $\Lambda_{n}(\Delta)$, $\Lambda_{n+1}(\Delta_1)$ and $\Lambda_{n+1}(\Delta_2)$, respectively.   If $g\neq h$, then we have $r_{n+1}(\Delta_1)\neq r_{n+1}(\Delta_2)$, since  $r_{n+1}(\Delta_1)=f^{-1}\circ g$ and $r_{n+1}(\Delta_2)=f^{-1}\circ h$. If $g=h$, then since $V_{n+1}(\Delta_1)=g^{-1}\circ\Lambda_{n+1}(\Delta_1)$, $V_{n+1}(\Delta_2)=h^{-1}\circ\Lambda_{n+1}(\Delta_2)$ and $\Lambda_{n+1}(\Delta_1)\neq \Lambda_{n+1}(\Delta_2)$, we see that $V_{n+1}(\Delta_1)\neq V_{n+1}(\Delta_2)$.  Hence we have shown that either $r_{n+1}(\Delta_1)\neq r_{n+1}(\Delta_2)$ or $V_{n+1}(\Delta_1)\neq V_{n+1}(\Delta_2)$. This  implies that  $\mathcal{C}_{n+1}(\Delta_1)\neq \mathcal{C}_{n+1}(\Delta_2)$, as desired. \qed
\end{proof}

Now we proceed to introduce the symbolic expression for each element in $\{\Delta\in\xi_n: n\in\N\}$. For this purpose,  we need establish the following result.  

\begin{lem}\label{lem: char vector}
Let $k, \ell\in \N$, $\Delta_1\in\xi_{k}$ and  $\Delta_2\in\xi_{\ell}$. 
If  $\mathcal{C}_{k}(\Delta_1)=\mathcal{C}_{\ell}(\Delta_2)$, then  
$$\left\{\mathcal{C}_{k+1}(\Delta): \Delta\in \xi_{k+1}, \Delta\subseteq \Delta_1\right\}=\left\{\mathcal{C}_{\ell+1}(\Delta):\Delta\in \xi_{\ell+1}, \Delta\subseteq \Delta_2\right\}.$$
\end{lem}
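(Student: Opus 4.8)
\medskip
\noindent\textbf{Proof proposal.}
The plan is to extract from the hypothesis $\mathcal{C}_{k}(\Delta_1)=\mathcal{C}_{\ell}(\Delta_2)$ a single similitude $g$ on $\R^{d}$ which maps $\Delta_1$ onto $\Delta_2$ together with all of its ``local data'' ($\Lambda_k$ and $N_k$); this $g$ will then restrict to a bijection between the children of $\Delta_1$ in $\xi_{k+1}$ and the children of $\Delta_2$ in $\xi_{\ell+1}$, and will satisfy $\mathcal{C}_{k+1}(\Delta)=\mathcal{C}_{\ell+1}(g(\Delta))$ for every child $\Delta$ of $\Delta_1$. Granting this, the asserted equality of the two sets of characteristic vectors follows at once.

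To build $g$, I would let $f$ and $f'$ be the $\prec$-least entries of the ordered vectors $\Lambda_k(\Delta_1)$ and $\Lambda_\ell(\Delta_2)$ and set $g=f'\circ f^{-1}$. Since $\mathcal{C}_k(\Delta_1)=\mathcal{C}_\ell(\Delta_2)$ forces $V_k(\Delta_1)=V_\ell(\Delta_2)$ and $U_k(\Delta_1)=U_\ell(\Delta_2)$, comparing these vectors entry by entry gives $\Lambda_\ell(\Delta_2)=g\circ\Lambda_k(\Delta_1)$ and $N_\ell(\Delta_2)=g\circ N_k(\Delta_1)$ as \emph{ordered} vectors; in particular $g$ preserves the order $\prec$ among the entries of $\Lambda_k(\Delta_1)$ and among those of $N_k(\Delta_1)$, and substituting the two identities into \eqref{eqDeltaeqleft} yields $\Delta_2=g(\Delta_1)$. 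Next, for a child $\Delta\in\xi_{k+1}$ with $\Delta\subseteq\Delta_1$, I would establish the ``confinement'' $\Lambda_{k+1}(\Delta)\subseteq\{p\circ S:p\in\Lambda_k(\Delta_1),\,S\in\Phi\}$ and $N_{k+1}(\Delta)\subseteq\{p\circ S:p\in N_k(\Delta_1),\,S\in\Phi\}$: the first uses $\Delta\subseteq h(K)\subseteq p(K)$ for $h\in\Lambda_{k+1}(\Delta)$ together with the fact that $\Lambda_k$ is constant on $\Delta_1$ (this is close to Lemma~\ref{lem: xi(n+1) refines xi(n)}(ii)); the second uses in addition that $\bigcap_{h\in\Lambda_{k+1}(\Delta)}h(K)\subseteq\bigcap_{f\in\Lambda_k(\Delta_1)}f(K)$ (since each $f\in\Lambda_k(\Delta_1)$ has $f\circ S_i\in\Lambda_{k+1}(\Delta)$ for some $i$, as $\Delta$ is a single element of $\xi_{k+1}$) and the definition of $N_k(\Delta_1)$. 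Consequently $g$ sends both $\Lambda_{k+1}(\Delta)$ and $N_{k+1}(\Delta)$ into $\{q'\circ S:q'\in N_\ell(\Delta_2),\,S\in\Phi\}\subseteq\{S_J:J\in\Sigma_{\ell+1}\}$, so I may apply Lemma~\ref{lemcharac} with $n=\ell+1$ and $\mathcal{U}=g\circ\Lambda_{k+1}(\Delta)$. A short computation using $\Delta_2=g(\Delta_1)$, the confinement, and \eqref{eqDeltaeqleft} then shows that the neighbour set $\mathcal{V}$ attached to $\mathcal{U}$ by Lemma~\ref{lemcharac} equals $g\circ N_{k+1}(\Delta)$ and that the set Lemma~\ref{lemcharac} carves out equals $g(\Delta)$, which is non-empty; hence $g(\Delta)\in\xi_{\ell+1}$, $g(\Delta)\subseteq g(\Delta_1)=\Delta_2$, and $\Lambda_{\ell+1}(g(\Delta))=g\circ\Lambda_{k+1}(\Delta)$, $N_{\ell+1}(g(\Delta))=g\circ N_{k+1}(\Delta)$ as sets. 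Since the children of $\Delta_1$ partition $\Delta_1$ and the children of $\Delta_2$ partition $\Delta_2=g(\Delta_1)$, and both families are subcollections of the partition $\xi_{\ell+1}$, the map $\Delta\mapsto g(\Delta)$ is a bijection from the children of $\Delta_1$ onto the children of $\Delta_2$.

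It remains to verify $\mathcal{C}_{k+1}(\Delta)=\mathcal{C}_{\ell+1}(g(\Delta))$ for each child $\Delta$ of $\Delta_1$. Using the compatibility of $\prec$ with concatenation --- the counterpart for $\prec$ of Lemma~\ref{lembpoforder} (cf.\ Lemma~\ref{lemporder}) --- and the order-preservation of $g$ on $\Lambda_k(\Delta_1)$ and $N_k(\Delta_1)$ noted above, I would upgrade the two set identities of the previous paragraph to identities of $\prec$-ordered vectors $\Lambda_{\ell+1}(g(\Delta))=g\circ\Lambda_{k+1}(\Delta)$ and $N_{\ell+1}(g(\Delta))=g\circ N_{k+1}(\Delta)$; in particular, if $f_1$ is the $\prec$-least entry of $\Lambda_{k+1}(\Delta)$, then $g\circ f_1$ is the $\prec$-least entry of $\Lambda_{\ell+1}(g(\Delta))$. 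Plugging this, together with $f'=g\circ f$, into the definitions of $V$, $U$ and $r$ --- and recalling that $\Delta_1$ is the parent of $\Delta$ in $\xi_k$ while $\Delta_2=g(\Delta_1)$ is the parent of $g(\Delta)$ in $\xi_\ell$ --- makes the factors of $g$ cancel and yields $V_{k+1}(\Delta)=V_{\ell+1}(g(\Delta))$, $U_{k+1}(\Delta)=U_{\ell+1}(g(\Delta))$ and $r_{k+1}(\Delta)=r_{\ell+1}(g(\Delta))$, i.e.\ $\mathcal{C}_{k+1}(\Delta)=\mathcal{C}_{\ell+1}(g(\Delta))$. Combined with the bijection above, this gives the claimed equality of sets.

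The step I expect to be the main obstacle is the passage from the set identities to the ordered-vector identities for $\Lambda_{\ell+1}$ and $N_{\ell+1}$: a priori, left composition with the similitude $g$ need not respect $\prec$, and if the $\prec$-least entry of $\Lambda_{\ell+1}(g(\Delta))$ turned out to be $g$ of a non-least entry of $\Lambda_{k+1}(\Delta)$, the comparisons of $V$, $U$ and $r$ would collapse. Controlling this is precisely the reason for defining $\prec$ via the lexicographic order on minimal codings, so I expect it to be the technical heart of the argument. A secondary difficulty is the confinement claim --- that $\Lambda_{k+1}(\Delta)$ and $N_{k+1}(\Delta)$ stay inside $\{p\circ S:p\in N_k(\Delta_1),\,S\in\Phi\}$, so that $g$ does not leave the semigroup $\Phi_*$ --- which rests on the net structure of $\{\xi_n\}$ (Lemma~\ref{lem: xi(n+1) refines xi(n)}) and the defining property \eqref{eqDeltaeqleft}.
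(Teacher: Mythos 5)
Your proposal is correct and follows essentially the same route as the paper: the paper also takes $\phi=h_1\circ f_1^{-1}$ (your $g$), shows via the net structure that $\phi$ carries $\Lambda_{k+1}(\Delta)$ and $N_{k+1}(\Delta)$ onto $\Lambda_{\ell+1}(\phi(\Delta))$ and $N_{\ell+1}(\phi(\Delta))$ (its Lemma~\ref{lemDn+1}, whose content you rederive through Lemma~\ref{lemcharac} and your confinement claim), and resolves the order-preservation issue you correctly flag as the technical heart by exactly the mechanism you invoke, namely Lemma~\ref{lemporder} built on Lemma~\ref{concatenation lem}.
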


To prove Lemma \ref{lem: char vector}, we first give  several lemmas. 

In this paper, we always assume that the similitudes in the IFS $\Phi$ are distinct, which is clearly a natural assumption.
 Given  $I=i_1\ldots i_k\in\Sigma_k$ and  $J=j_1 \ldots j_{\ell}\in\Sigma_{\ell}$,  write $IJ=i_1\ldots i_kj_1\ldots j_{\ell}\in\Sigma_{k+\ell}$ for the concatenation of $I$ and $J$. We will use the following simple property of the mapping $\omega:\Phi_*\to\Sigma_*$ that we defined after Lemma \ref{lembpoforder}.

\begin{lem}\label{concatenation lem}
Let $n\geq 0$, $\Delta\in\xi_n$, $\Delta_1\in \xi_{n+1}$ with $\Delta_1\subseteq \Delta$ and assume that $\Lambda_n(\Delta)=(f_1,\ldots, f_k)$. Given $h\in\Lambda_{n+1}(\Delta_1)$ let $i$ be the smallest integer in $\{1,\ldots, k\}$ so that $h=f_i\circ S_j$ for some  $j\in \{1,\ldots, m\}$. Then $\omega(h)= \omega(f_i)j.$
\end{lem}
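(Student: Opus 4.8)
The plan is to unwind the definition of $\omega$ and use the minimality built into it, together with Lemma~\ref{lembpoforder} and the choice of $i$ as the \emph{smallest} index realising $h$. Write $\omega(f_i) = I = i_1\ldots i_n \in \Sigma_n$, so that $S_I = f_i$ and $I$ is $\leqslant_{\rm lex}$-minimal among words of length $n$ representing $f_i$ (note all words representing $f_i$ have length $n$ since $f_i\in\Lambda_n(\Delta)\subseteq\Phi_n$ and the similitudes in $\Phi$ are distinct, hence $\Sigma_n\to\Phi_n$ preserves length). Then $S_{Ij} = f_i\circ S_j = h$, so $Ij$ is one word of length $n+1$ representing $h$; consequently $\omega(h)\leqslant_{\rm lex} Ij = \omega(f_i)j$. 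It remains to prove the reverse inequality $\omega(h)\geqslant_{\rm lex} \omega(f_i)j$.

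For this, write $\omega(h) = J'j' = j_1\ldots j_n j'$ with $J' = j_1\ldots j_n\in\Sigma_n$ and $j'\in\Sigma$; this is the $\leqslant_{\rm lex}$-minimal word of length $n+1$ with $S_{J'j'} = h$. Then $S_{J'} = h\circ S_{j'}^{-1}$... more carefully, $S_{J'}\circ S_{j'} = h = f_i\circ S_j$, and I want to conclude $S_{J'}\in\Lambda_n(\Delta)$. Here is where the hypotheses on $\Delta_1\subseteq\Delta$ enter: by Lemma~\ref{lem: xi(n+1) refines xi(n)}(ii) applied with $\widehat{\Delta} = \Delta$, the set $\Lambda_n(\Delta)$ consists exactly of those $S_L$ ($L\in\Sigma_n$) admitting $S\in\Phi$ with $S_L\circ S\in\Lambda_{n+1}(\Delta_1)$; since $h\in\Lambda_{n+1}(\Delta_1)$ and $h = S_{J'}\circ S_{j'}$ with $S_{j'}\in\Phi$, we get $S_{J'}\in\Lambda_n(\Delta)$, say $S_{J'} = f_t$ for some $t\in\{1,\ldots,k\}$. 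By minimality of $\omega(f_t)$ we have $\omega(f_t)\leqslant_{\rm lex} J'$. Now $h = f_t\circ S_{j'}$ exhibits $h$ via the pair $(t,j')$, so by the choice of $i$ as the \emph{smallest} such index, $i\leqslant t$. I then need the comparison $\omega(f_i)\leqslant_{\rm lex}\omega(f_t)$: this should follow from the fact that $\Lambda_n(\Delta)$ is listed in increasing $\prec$-order, i.e. $f_i\prec f_t$ or $f_i = f_t$ when $i\le t$, which by definition of $\prec$ gives $\omega(f_i)\leqslant_{\rm lex}\omega(f_t)\leqslant_{\rm lex} J'$. Combining, $\omega(f_i)\leqslant_{\rm lex} J' = j_1\ldots j_n$.

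With $\omega(f_i)\leqslant_{\rm lex} J'$ in hand I split into two cases. If $\omega(f_i) <_{\rm lex} J'$, then since both have length $n$, Lemma~\ref{lembpoforder} gives $\omega(f_i)j <_{\rm lex} J'j' = \omega(h)$, so $\omega(h) >_{\rm lex} \omega(f_i)j$, contradicting $\omega(h)\leqslant_{\rm lex}\omega(f_i)j$ established above; hence this case is impossible. Therefore $\omega(f_i) = J'$, i.e. $S_{J'} = f_i$, whence $t$ may be taken equal to $i$ and $S_{J'}\circ S_{j'} = f_i\circ S_{j'} = h = f_i\circ S_j$; since the similitude $f_i$ is invertible, $S_{j'} = S_j$, and as the maps in $\Phi$ are distinct, $j' = j$. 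Thus $\omega(h) = J'j' = \omega(f_i)j$, as claimed.

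The main obstacle I anticipate is the bookkeeping in the reverse inequality: one must be careful to extract from $\omega(h) = j_1\ldots j_n j'$ that the length-$n$ prefix $j_1\ldots j_n$ represents an element of $\Lambda_n(\Delta)$ (which is exactly what Lemma~\ref{lem: xi(n+1) refines xi(n)}(ii) delivers, and is the reason the hypothesis $\Delta_1\subseteq\Delta\in\xi_n$ is needed), and then to chain the three inequalities $\omega(f_i)\leqslant_{\rm lex}\omega(f_t)\leqslant_{\rm lex} J'$ using minimality of $\omega$ and the $\prec$-ordering of $\Lambda_n(\Delta)$ — after which Lemma~\ref{lembpoforder} forces equality of the prefixes and the distinctness of the IFS maps forces equality of the last letters. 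Everything else is a direct unravelling of definitions.
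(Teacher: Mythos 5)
Your proof is correct and uses essentially the same ingredients as the paper's: the easy inequality $\omega(h)\leqslant_{\rm lex}\omega(f_i)j$, Lemma \ref{lem: xi(n+1) refines xi(n)}(ii) to see that the length-$n$ prefix of $\omega(h)$ represents some $f_t\in\Lambda_n(\Delta)$, the minimality of $i$ together with the $\prec$-ordering, Lemma \ref{lembpoforder}, and the distinctness of the maps in $\Phi$. The paper merely arranges the same case analysis as a proof by contradiction starting from $\omega(h)<_{\rm lex}\omega(f_i)j$, so the two arguments are logically equivalent.
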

\begin{proof}
Let $h\in \Lambda_{n+1}(\Delta_1)$. First, from Lemma \ref{lem: xi(n+1) refines xi(n)}(ii) we see that  the above $i,j$  exist. Clearly, $\omega(f_i)\in\Sigma_n$ and $\omega(h)\in\Sigma_{n+1}$. Moreover, since $h=f_i\circ S_j=S_{\omega(f_i)j}$, we have  $\omega(h)\leqslant_{\rm lex} \omega(f_i)j$. Below we prove that   $\omega(h)=\omega(f_i)j$ by  contradiction. 

Suppose on the contrary that $\omega(h)<_{\rm lex}\omega(f_i)j$.
Write $\omega(h)=i_1\ldots i_{n+1}$.  Then by Lemma \ref{lembpoforder}, either $i_1\ldots i_n<_{\rm lex}\omega(f_i)$, or $i_1\ldots i_n=\omega(f_i)$ and $i_{n+1}<j$. Notice that   $S_{i_1\ldots i_n}\in\{f_1,\ldots,f_k\}$ by  Lemma \ref{lem: xi(n+1) refines xi(n)}(ii).  If $i_1\ldots i_n<_{\rm lex}\omega(f_i)$, then $\omega(S_{i_1\ldots i_n})\leqslant_{\rm lex} i_1\ldots i_n<_{\rm lex}\omega(f_i)$, which implies that $S_{i_1\ldots i_n}\prec f_i$. Therefore  $S_{i_1\ldots i_n}=f_{\ell}$ and so $h=f_{\ell}\circ S_{i_{n+1}}$ for some $\ell<i$, contradicting the minimality of $i$.  If the other case occurs, i.e. $i_1\ldots i_n=\omega(f_i)$ and $i_{n+1}<j$, then $S_{i_{n+1}}=S_j$. This contradicts our assumption that $S_1,\ldots, S_m\in \Phi$ are distinct.  Hence we have $\omega(h)=\omega(f_i)j$. \qed \end{proof}

\begin{rmk}\label{rmk: concatenation}The conclusion of Lemma \ref{concatenation lem} also holds  if we replace $\Lambda_n(\Delta)$ by $N_n(\Delta)$ and  $\Lambda_{n+1}(\Delta_1)$ by  $N_{n+1}(\Delta_1)$. Indeed,  by the definitions of $N_n(\Delta)$ and $N_{n+1}(\Delta_1)$, it is easily seen that for every $i_1\ldots i_{n+1}\in \Sigma_{n+1}$ with $S_{i_1\ldots i_{n+1}}\in N_{n+1}(\Delta_1)$, we have $S_{i_1\ldots i_n}\in N_n(\Delta)$.  Then the above assertion follows by the same proof of Lemma \ref{concatenation lem}.
\end{rmk}

An essential part to prove Lemma \ref{lem: char vector} is the following result, which says that if  $\phi$ is a similitude which maps $\Lambda_{k}(\Delta_1)$ to $\Lambda_{\ell}(\Delta_2)$ and $N_{k}(\Delta_1)$ to $N_{\ell}(\Delta_2)$ and  preserves the order of the elements of $\Lambda_{k}(\Delta_1)$ and $N_k(\Delta_1)$, then $\phi$ also preserves the order of the elements of $\Lambda_{k+1}(\Delta)$ and $N_{k+1}(\Delta)$ whenever $\Delta$ is an offspring of $\Delta_1$ in $\xi_{k+1}$.

\begin{lem}\label{lemporder}
Let $k, \ell\in \N$, $\Delta_1\in\xi_k$ and  $\Delta_2\in\xi_{\ell}$.  Suppose that   $\Lambda_k(\Delta_1)=(f_1,\ldots,f_p)$, $N_k(\Delta_1)=(g_1,\ldots, g_q)$, $\Lambda_{\ell}(\Delta_2)=(h_1,\ldots, h_{p})$ and $N_{\ell}(\Delta_2)=(u_1,\ldots, u_{q})$, and there is a similitude $\phi$ such that $\phi\circ f_i=h_i$, $\phi\circ g_j=u_j$ for $1\leq i\leq p$ and $1\leq j\leq q$.
Let $\Delta\in\xi_{k+1}$ with $\Delta\subseteq \Delta_1$, and $v,\tilde{v}\in \Lambda_{k+1}(\Delta)$ with $v\neq \tilde{v}$. Then

{\rm (i)} $\phi(\Delta_1)=\Delta_2$.

{\rm (ii)} $v\prec \tilde{v}$ if and only if $ \phi\circ v\prec \phi\circ \tilde{v}$, and the same conclusion  holds if $\Lambda_{k+1}(\Delta)$ is replaced by $N_{k+1}(\Delta)$.
\end{lem}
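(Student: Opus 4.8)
The plan is to prove (i) and (ii) together, obtaining (i) directly and then bootstrapping (ii) from an explicit description of the first entries and the lexicographic bookkeeping provided by Lemma~\ref{concatenation lem} and Remark~\ref{rmk: concatenation}. For (i), I would simply feed the hypotheses into the defining identity \eqref{eqDeltaeqleft}: since $\phi$ is a similitude, $\phi$ commutes with taking images, so $\phi\bigl(\bigcap_{f\in\Lambda_k(\Delta_1)}f(K)\bigr)=\bigcap_{i}(\phi\circ f_i)(K)=\bigcap_i h_i(K)$ and likewise $\phi\bigl(\bigcup_{g\in N_k(\Delta_1)\setminus\Lambda_k(\Delta_1)}g(K)\bigr)=\bigcup_{g\in N_\ell(\Delta_2)\setminus\Lambda_\ell(\Delta_2)}g(K)$, using that $\phi$ matches the two lists entrywise and hence matches the sublists $N\setminus\Lambda$ as sets. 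Applying \eqref{eqDeltaeqleft} to $\Delta_1$ and to $\Delta_2$ then gives $\phi(\Delta_1)=\Delta_2$.

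For (ii), the key observation is that $\phi$ conjugates the two IFS-refinement pictures. Concretely: $f_1$ and $h_1=\phi\circ f_1$ are the first (i.e.\ $\prec$-minimal) entries of $\Lambda_k(\Delta_1)$ and $\Lambda_\ell(\Delta_2)$ respectively. I would first check that $\phi\circ\Lambda_{k+1}(\Delta)$, as an \emph{unordered} set, equals $\Lambda_{\ell+1}(\phi(\Delta))$ for the unique $\phi(\Delta)\in\xi_{\ell+1}$ with $\phi(\Delta)\subseteq\Delta_2$; this is again immediate from \eqref{eq: defining property} applied to $\Delta$ and to $\phi(\Delta)$, together with part (i) (and Lemma~\ref{lem: xi(n+1) refines xi(n)}(ii), which identifies $\Lambda_k(\widehat{\cdot})$ with the parents of the level-$(k+1)$ similitudes). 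The same argument with $N$ in place of $\Lambda$, using Lemma~\ref{lem2-3} / the definition of $N_{k+1}$ and the fact that $\phi$ sends $N_k(\Delta_1)$ onto $N_\ell(\Delta_2)$, shows $\phi\circ N_{k+1}(\Delta)=N_{\ell+1}(\phi(\Delta))$ as sets. So the real content is that $\phi$ \emph{preserves the $\prec$-order} on these level-$(k+1)$ lists, not merely the underlying sets.

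To get the order-preservation, take $v,\tilde v\in\Lambda_{k+1}(\Delta)$ with $v\neq\tilde v$. By Lemma~\ref{concatenation lem} there are smallest indices $i,i'\in\{1,\dots,p\}$ and letters $a,a'\in\{1,\dots,m\}$ with $v=f_i\circ S_a$, $\tilde v=f_{i'}\circ S_{a'}$, and moreover $\omega(v)=\omega(f_i)\,a$, $\omega(\tilde v)=\omega(f_{i'})\,a'$. Applying $\phi$: $\phi\circ v=h_i\circ S_a$ and $\phi\circ\tilde v=h_{i'}\circ S_{a'}$; I must check that $i$ (resp.\ $i'$) is still the \emph{smallest} index realizing $\phi\circ v$ as $h_\bullet\circ S_\bullet$, which follows because $\phi\circ f_j=h_j$ preserves the order $f_1\prec\cdots\prec f_p$ exactly, so $h_\iota\circ S_b=h_i\circ S_a$ with $\iota<i$ would pull back (via $\phi^{-1}$) to $f_\iota\circ S_b=f_i\circ S_a$, contradicting minimality of $i$; here one also uses that the $S_1,\dots,S_m$ are distinct so the letter is forced, $b=a$. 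Hence Lemma~\ref{concatenation lem} gives $\omega(\phi\circ v)=\omega(h_i)\,a$ and $\omega(\phi\circ\tilde v)=\omega(h_{i'})\,a'$. Now by Lemma~\ref{lembpoforder}, the comparison $v\prec\tilde v$ versus $\phi\circ v\prec\phi\circ\tilde v$ reduces to comparing $(\omega(f_i),a)$ with $(\omega(f_{i'}),a')$ lexicographically, versus comparing $(\omega(h_i),a)$ with $(\omega(h_{i'}),a')$; and since $i\leqslant i'\iff f_i\preccurlyeq f_{i'}\iff \omega(f_i)\leqslant_{\rm lex}\omega(f_{i'})\iff \omega(h_i)\leqslant_{\rm lex}\omega(h_{i'})\iff h_i\preccurlyeq h_{i'}$ (the middle equivalences being exactly that $\phi$ maps the ordered list $(f_j)$ to the ordered list $(h_j)$), the two lexicographic comparisons have the same outcome. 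The case $i=i'$ forces $a\neq a'$ and both comparisons reduce to $a<a'$. This proves the $\Lambda$-case; the $N$-case is identical, invoking Remark~\ref{rmk: concatenation} in place of Lemma~\ref{concatenation lem}.

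The main obstacle I anticipate is the minimality-of-index bookkeeping in the middle step: one must be careful that the \emph{smallest} $i$ with $v=f_i\circ S_a$ is mapped to the \emph{smallest} $i$ with $\phi\circ v=h_i\circ S_a$, i.e.\ that $\phi$ does not create new earlier factorizations. This is where the hypothesis that $\phi$ preserves the \emph{order} of the lists (not just their sets) is essential, and where distinctness of $S_1,\dots,S_m$ is used to pin down the last letter. Everything else is a routine unwinding of the definitions of $\omega$, $\preccurlyeq$, and $\leqslant_{\rm lex}$ via Lemmas~\ref{lembpoforder} and~\ref{concatenation lem}.
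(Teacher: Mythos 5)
Your proposal is correct and follows essentially the same route as the paper's proof: part (i) from the defining identity \eqref{eqDeltaeqleft}, and part (ii) by applying Lemma~\ref{concatenation lem} (and Remark~\ref{rmk: concatenation} for the $N$-case), checking via the pullback under $\phi^{-1}$ that the minimal index in the factorization $v=f_i\circ S_a$ is preserved, and then reducing both order comparisons to the pair $(i,a)$ versus $(i',a')$ through Lemma~\ref{lembpoforder}. The only difference is that you also sketch the set-level identity $\phi\circ\Lambda_{k+1}(\Delta)=\Lambda_{\ell+1}(\phi(\Delta))$, which is not needed for (ii) and is deferred in the paper to Lemma~\ref{lemDn+1}.
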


\begin{proof}
Part (i) of the lemma follows directly from the definition of the elements of $\xi_n(n\geq 0)$ (cf. \eqref{eqDeltaeqleft}) and the assumption that $\phi\circ f_i=h_i$, $\phi\circ g_j=u_j$ for $1\leq i\leq p$ and $1\leq j\leq q$. Below we prove (ii).

Let  $s$ be the smallest integer in $\{1,\ldots, p\}$   so that $v=f_{s}\circ S_{t}$ for some  $t\in \{1,\ldots, m\}$, and  $\tilde{s}$ be the smallest integer in $\{1,\ldots, p\}$  so that $\tilde{v}=f_{\tilde{s}}\circ S_{\tilde{t}}$ for some  $\tilde{t}\in \{1,\ldots, m\}$. Since  $\phi\circ f_i=h_i$ for $1\leq i\leq p$, it is easily seen from the minimality of $s$  that $s$ is also the smallest integer in $\{1,\ldots, p\}$  such that $\phi\circ v=h_{s}\circ S_{t}$.  Similarly, $\tilde{s}$ is the smallest integer in $\{1,\ldots, p\}$ such that $\phi\circ\tilde{v}=h_{\tilde{s}}\circ S_{\tilde{t}}$.  Then by Lemma \ref{concatenation lem}, we have 
\begin{equation}\label{eqomegaveq}
\omega(v)=\omega(f_{s})t, \quad \omega(\phi\circ v)=\omega(h_{s})t, \quad \omega(\tilde{v})=\omega(f_{\tilde{s}})\tilde{t}, \quad \omega(\phi\circ \tilde{v})=\omega(h_{\tilde{s}})\tilde{t}. 
\end{equation}

 Now  by \eqref{eqomegaveq} and  Lemma \ref{lembpoforder},  we have
\begin{align*}\label{eq: comparing order}
\begin{split}
v\prec \tilde{v}&\iff \omega(v)<_{\rm lex}\omega(\tilde{v})\\
&\iff \omega(f_{s})t<_{\rm lex}\omega(f_{\tilde{s}})\tilde{t} \quad  \text{(by \eqref{eqomegaveq})} \\
&\iff \omega(f_{s})<_{\rm lex}\omega(f_{\tilde{s}}), \text{ or } \omega(f_{s})=\omega(f_{\tilde{s}}) \text{ and }t<\tilde{t} \quad \text{(by Lemma \ref{lembpoforder})}\\
&\iff s<\tilde{s}, \text{ or }s=\tilde{s} \text{ and }t<\tilde{t}.
\end{split}
\end{align*}
A similar argument yields that  
\[\phi\circ v\prec \phi\circ\tilde{v}\iff s<\tilde{s}, \  \text{ or }  s=\tilde{s}  \text{ and }  t<\tilde{t}.\]
Therefore, $v\prec \tilde{v}\iff\phi\circ v\prec \phi\circ\tilde{v}$. 
In view of Remark \ref{rmk: concatenation}, it is easy to see that the same conclusion  holds if $\Lambda_{k+1}(\Delta)$ is replaced by  $N_{k+1}(\Delta)$. \qed \end{proof}

\begin{lem}\label{lemDn+1}
Under the assumptions of Lemma \ref{lemporder},   we have $\phi(\Delta)\subseteq \Delta_2$, $\phi(\Delta)\in\xi_{\ell+1}$, $\Lambda_{\ell+1}(\phi(\Delta))=\phi\circ \Lambda_{k+1}(\Delta)$ and $N_{\ell+1}(\phi(\Delta))=\phi\circ N_{k+1}(\Delta)$. \end{lem}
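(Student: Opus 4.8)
The plan is to leverage Lemma \ref{lemporder} to transport all the combinatorial data defining $\Delta$ through $\phi$, and then invoke Lemma \ref{lemcharac} to conclude that the transported set is genuinely an element of $\xi_{\ell+1}$ with the claimed neighbor sets. First I would recall from \eqref{eq:Delta n+1} that
\[
\Delta=\left(\bigcap_{h\in\Lambda_{k+1}(\Delta)}h(K)\right)\Big\backslash\left(\bigcup_{I\in\Sigma_{k+1}:\,S_I\notin\Lambda_{k+1}(\Delta)}S_I(K)\right),
\]
and analogously, by \eqref{eqDeltaeqleft}, $\Delta$ is cut out from $\bigcap_{h\in\Lambda_{k+1}(\Delta)}h(K)$ by removing $\bigcup_{g\in N_{k+1}(\Delta)\setminus\Lambda_{k+1}(\Delta)}g(K)$. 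Since $\phi$ is a bijection of $\R^d$ carrying each $h(K)$ to $(\phi\circ h)(K)$, applying $\phi$ to this representation gives
\[
\phi(\Delta)=\left(\bigcap_{h\in\phi\circ\Lambda_{k+1}(\Delta)}h(K)\right)\Big\backslash\left(\bigcup_{g\in\phi\circ\left(N_{k+1}(\Delta)\setminus\Lambda_{k+1}(\Delta)\right)}g(K)\right).
\]
So the first task is to identify $\phi\circ\Lambda_{k+1}(\Delta)$ and $\phi\circ N_{k+1}(\Delta)$ as the correct sets at level $\ell+1$ inside $\Delta_2$.

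Next I would argue that $\phi\circ\Lambda_{k+1}(\Delta)\subseteq\{S_I:I\in\Sigma_{\ell+1}\}$. Each $v\in\Lambda_{k+1}(\Delta)$ has the form $v=f_i\circ S_j$ by Lemma \ref{lem: xi(n+1) refines xi(n)}(ii), so $\phi\circ v=(\phi\circ f_i)\circ S_j=h_i\circ S_j$, and since $h_i=S_{\omega(h_i)}\in\Phi_*$ with $\omega(h_i)\in\Sigma_\ell$ (as $h_i\in\Lambda_\ell(\Delta_2)$), we get $\phi\circ v=S_{\omega(h_i)j}\in\Phi_{\ell+1}$; the analogous statement holds for $N_{k+1}(\Delta)$ using Remark \ref{rmk: concatenation}. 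Then I would set
\[
\mathcal{U}=\phi\circ\Lambda_{k+1}(\Delta),\qquad \mathcal{V}=\left\{S_J:J\in\Sigma_{\ell+1},\ S_J(K)\cap\left(\textstyle\bigcap_{f\in\mathcal{U}}f(K)\right)\neq\emptyset\right\},
\]
and check that $\mathcal{V}=\phi\circ N_{k+1}(\Delta)$. The inclusion $\phi\circ N_{k+1}(\Delta)\subseteq\mathcal{V}$ is immediate because $\phi$ is a bijection and $\bigcap_{f\in\mathcal{U}}f(K)=\phi\left(\bigcap_{h\in\Lambda_{k+1}(\Delta)}h(K)\right)$, so any $g\in N_{k+1}(\Delta)$ meeting this intersection maps to $\phi\circ g$ meeting the image; the reverse inclusion needs that every $S_J$ ($J\in\Sigma_{\ell+1}$) whose image meets $\bigcap_{f\in\mathcal{U}}f(K)$ is of the form $\phi\circ g$ for some $g\in N_{k+1}(\Delta)$ — here I would write $J=J'j$ with $S_{J'}\in\Sigma_\ell$, observe $S_{J'}(K)$ meets $\bigcap_{h\in\Lambda_\ell(\Delta_2)}h(K)$, hence $S_{J'}\in N_\ell(\Delta_2)=\{u_1,\dots,u_q\}=\phi\circ\{g_1,\dots,g_q\}$, so $S_{J'}=\phi\circ g_a=h'_a$ for some $g_a\in N_k(\Delta_1)$, giving $S_J=\phi\circ(g_a\circ S_j)$ with $g_a\circ S_j\in N_{k+1}(\Delta_1)$ and, because $S_J(K)$ meets the intersection, in fact $g_a\circ S_j\in N_{k+1}(\Delta)$. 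With $\mathcal{U}$ and $\mathcal{V}$ pinned down, $\phi(\Delta)$ is exactly the set appearing in Lemma \ref{lemcharac}; since $\phi(\Delta)\neq\emptyset$ (as $\Delta\neq\emptyset$), that lemma yields $\phi(\Delta)\in\xi_{\ell+1}$ with $\Lambda_{\ell+1}(\phi(\Delta))=\mathcal{U}=\phi\circ\Lambda_{k+1}(\Delta)$ and $N_{\ell+1}(\phi(\Delta))=\mathcal{V}=\phi\circ N_{k+1}(\Delta)$ \emph{as sets}. Finally, $\phi(\Delta)\subseteq\Delta_2$ follows since $\phi(\Delta)\subseteq\phi(\Delta_1)=\Delta_2$ by Lemma \ref{lemporder}(i).

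It remains to upgrade the set-equalities $\Lambda_{\ell+1}(\phi(\Delta))=\phi\circ\Lambda_{k+1}(\Delta)$ and $N_{\ell+1}(\phi(\Delta))=\phi\circ N_{k+1}(\Delta)$ to \emph{ordered-vector} equalities, i.e.\ that $\phi$ respects the order $\prec$ on the entries. This is precisely where Lemma \ref{lemporder}(ii) enters: for any two distinct $v,\tilde v\in\Lambda_{k+1}(\Delta)$ we have $v\prec\tilde v\iff\phi\circ v\prec\phi\circ\tilde v$, and likewise for $N_{k+1}(\Delta)$ by the second clause of that lemma. Hence listing $\Lambda_{k+1}(\Delta)$ in increasing $\prec$-order and applying $\phi$ produces exactly the increasing $\prec$-ordering of $\Lambda_{\ell+1}(\phi(\Delta))$, and similarly for the neighbor vectors, which gives the vector identities as stated. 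I expect the main obstacle to be the reverse inclusion $\mathcal{V}\subseteq\phi\circ N_{k+1}(\Delta)$ — that is, making sure no "new" neighbor appears at level $\ell+1$ that does not come from a level-$(k+1)$ neighbor of $\Delta$ — since this requires carefully descending from level $\ell+1$ to level $\ell$, using $N_\ell(\Delta_2)=\phi\circ N_k(\Delta_1)$, and re-ascending; everything else is a fairly mechanical application of Lemmas \ref{lemcharac}, \ref{lem: xi(n+1) refines xi(n)}, and \ref{lemporder} together with the bijectivity of $\phi$.
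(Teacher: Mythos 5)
Your proposal is correct and follows essentially the same route as the paper: both transport the level-$(k+1)$ data of $\Delta$ through $\phi$ by writing each element of $\Lambda_{k+1}(\Delta)$ (resp.\ $N_{k+1}(\Delta)$) as $f_i\circ S_j$ (resp.\ $g_a\circ S_j$) via Lemma \ref{lem: xi(n+1) refines xi(n)}(ii) and Remark \ref{rmk: concatenation}, use the hypothesis $\phi\circ f_i=h_i$, $\phi\circ g_j=u_j$ to identify the images as the correct level-$(\ell+1)$ data inside $\Delta_2$, and then invoke Lemma \ref{lemporder}(ii) to upgrade the set equalities to ordered-vector equalities. The only cosmetic difference is that you pin down $\phi(\Delta)$ globally and conclude via Lemma \ref{lemcharac}, whereas the paper verifies the pointwise identity $\phi\circ\Lambda_{k+1}(x)=\Lambda_{\ell+1}(\phi(x))$ for $x\in\Delta_1$; the substance is the same.
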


\begin{proof}
By Lemma \ref{lemporder}(i) and  $\Delta\subseteq\Delta_1$, we have  $\phi(\Delta)\subseteq\phi(\Delta_1)=\Delta_2$.   To prove the remaining statements of the lemma, we first show that for every $x\in \Delta_1$, 
\begin{equation}\label{eqScirc}
\phi\circ \Lambda_{k+1}(x)=\Lambda_{\ell+1}(\phi(x)).
\end{equation}
To see this, let $x\in \Delta_1$.  Notice that from Lemma \ref{lem: xi(n+1) refines xi(n)}(ii) we see that 
\begin{equation}\label{eqlambdanp1x}
\Lambda_{k+1}(x)=\left\{f_i\circ S: 1\leq i\leq p, S\in\Phi, x\in f_i\circ S(K)\right\},
\end{equation}
\begin{equation}\label{eqlambdanp1Sx}
\Lambda_{\ell+1}(\phi(x))=\left\{h_i\circ S: 1\leq i\leq p, S\in\Phi, \phi(x)\in h_i\circ S(K)\right\}.
\end{equation}
Since $\phi\circ f_i=h_i$ for $1\leq i\leq p$, \eqref{eqScirc} follows from  \eqref{eqlambdanp1x}-\eqref{eqlambdanp1Sx}.  Since  $\phi\circ g_j=u_j$ for  $1\leq j\leq q$, by \eqref{eqScirc} and a similar argument as above we easily  see that 
\begin{equation}\label{SSIin}
\begin{split}
&\left\{\phi\circ S_I:I\in\Sigma_{k+1}, S_I(K)\cap \left(\bigcap_{f\in\Lambda_{k+1}(x)}f(K)\right)\neq\emptyset\right\}\\
&=\left\{S_J:J\in\Sigma_{\ell+1}, S_J(K)\cap \left(\bigcap_{f'\in\Lambda_{\ell+1}(\phi(x))}f'(K)\right)\neq\emptyset\right\}.
\end{split}
\end{equation}
Now  the lemma  follows easily from \eqref{eqScirc}, \eqref{SSIin} and Lemma \ref{lemporder}. \qed  \end{proof}

We are ready to prove Lemma \ref{lem: char vector}.

\begin{proof}[Proof of Lemma \ref{lem: char vector}]
By Lemma \ref{lemDisch}, it suffices to show that for any $\Delta\in \xi_{k+1}$ with $\Delta\subseteq\Delta_1$, we can find $\Delta'\in \xi_{\ell+1}$ with $\Delta'\subseteq\Delta_2$   such that $\mathcal{C}_{k+1}(\Delta)=\mathcal{C}_{\ell+1}(\Delta')$.  

Write 
$\Lambda_k(\Delta_1)=(f_1,\ldots,f_p)$, $N_k(\Delta_1)=(g_1,\ldots, g_q)$, $\Lambda_{\ell}(\Delta_2)=(h_1,\ldots, h_{p'})$ and $N_{\ell}(\Delta_2)=(u_1,\ldots, u_{q'})$.
Let $\phi=h_1\circ f_1^{-1}$. Then since $\mathcal{C}_{k}(\Delta_1)=\mathcal{C}_{\ell}(\Delta_2)$, we have by definition that  $p=p'$,  $q=q'$,   
\begin{equation}\label{eqmapS}\phi\circ f_i=h_i \quad \text{ and } \quad \phi\circ g_j=u_j \quad \text{ for } 1\leq i\leq p, 1\leq j\leq q.
\end{equation}

Take $\Delta\in\xi_{k+1}$ with $\Delta\subseteq \Delta_1$ and assume that 
\begin{equation}\label{eq:Lambda_n+1 Delta1}
\Lambda_{k+1}(\Delta)=(v_1,\ldots, v_s),\quad N_{k+1}(\Delta)=(w_1,\ldots, w_t).
\end{equation}
Set $\Delta'=\phi(\Delta_1)$. Then by Lemma \ref{lemDn+1},  we have $\Delta'\subseteq \Delta_2$, $\Delta'\in\xi_{\ell+1}$ and 
\begin{equation}\label{eq:claim}
 \Lambda_{\ell+1}(\Delta')=(\phi\circ v_{1}, \ldots, \phi\circ v_{s}), \quad N_{\ell+1}(\Delta')=(\phi\circ w_1,\ldots, \phi\circ w_{t}).  
\end{equation}
It is straightforward to see  from \eqref{eqmapS}-\eqref{eq:claim} that  $\mathcal{C}_{k+1}(\Delta)=\mathcal{C}_{\ell+1}(\Delta')$. Since $\Delta\in\xi_{k+1}$ with $\Delta\subseteq \Delta_1$ is arbitrary,  we complete the proof of  the lemma. \qed
\end{proof}

Recall that $\Omega=\{\mathcal{C}_n(\Delta): \Delta\in \xi_n, n\geq 0\}$ is a finite set,  which in what follows we  view as an alphabet. We say that a word $\alpha_1\alpha_2\ldots\alpha_{\ell}$ over $\Omega$ is {\em admissible} if there exist $k\geq 0$ and $\Delta_i\in \xi_{k+i-1}$ ($i=1,\ldots, \ell$)  such that  
$\Delta_1\supseteq\cdots\supseteq\Delta_{\ell}$
and   $\mathcal{C}_{k+i-1}(\Delta_i)=\alpha_i$ for $1\leq i\leq \ell$. 
 From Lemma \ref{lem: char vector} we see that  the  notion of $\alpha_1\alpha_2\ldots\alpha_{\ell}$ being admissible is independent of the choice of the  sequence $\Delta_1, \Delta_2, \ldots, \Delta_{\ell}$.

Finally, we define the symbolic expression for  each element of $\{\Delta\in \xi_n: n\geq 0\}$. Given $n\geq 0$ and $\Delta\in\xi_n$, let $\Delta_0,\Delta_1,\ldots,\Delta_n$ be the unique sequence of sets satisfying 
\[K=\Delta_0\supseteq \Delta_1\supseteq\cdots\supseteq\Delta_n=\Delta\]
and $\Delta_i\in \xi_i$ for $i=0, 1,\ldots,n$. We call the sequence of characteristic vectors 
\[\mathcal{C}_0(\Delta_0),\mathcal{C}_1(\Delta_1), \ldots, \mathcal{C}_n(\Delta_n)\]
the {\em symbolic expression} of $\Delta$. 

By  Lemma \ref{lemDisch}, we immediately have the following.
\begin{lem}\label{lem: symbolic expression}For any $n\geq 1$ and  $\Delta_1,\Delta_2\in\xi_n $ with $\Delta_1\neq \Delta_2$, the symbolic expressions of $\Delta_1$ and $\Delta_2$ are different. Consequently, for any $n\in \N$ and $\Delta\in\xi_n$, $\Delta$ can be identified as an admissible word of length $n+1$ with initial letter $\mathcal{C}_0(K)$.
\end{lem}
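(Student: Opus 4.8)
The statement to prove is Lemma~\ref{lem: symbolic expression}, which asserts two things: first, that distinct elements of $\xi_n$ have distinct symbolic expressions, and second (as a consequence) that each $\Delta\in\xi_n$ can be identified with an admissible word of length $n+1$ beginning with $\mathcal{C}_0(K)$.

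\medskip

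The plan is to prove the first assertion by induction on $n$, invoking Lemma~\ref{lemDisch} at the inductive step. For $n=1$: any two distinct $\Delta_1,\Delta_2\in\xi_1$ are both contained in $K=\Delta_0\in\xi_0$, so by Lemma~\ref{lemDisch} (with the ambient set $\Delta=K$) we get $\mathcal{C}_1(\Delta_1)\neq\mathcal{C}_1(\Delta_2)$; since the symbolic expression of $\Delta_i$ is $\mathcal{C}_0(K),\mathcal{C}_1(\Delta_i)$, these already differ in the second letter. For the inductive step, suppose the claim holds for $n$ and let $\Delta_1,\Delta_2\in\xi_{n+1}$ be distinct. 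By Lemma~\ref{lem: xi(n+1) refines xi(n)}(i) each $\Delta_i$ is contained in a unique $\widehat{\Delta}_i\in\xi_n$. If $\widehat{\Delta}_1\neq\widehat{\Delta}_2$, then by the induction hypothesis the symbolic expressions of $\widehat{\Delta}_1$ and $\widehat{\Delta}_2$ already differ in one of the first $n+1$ letters, and since the symbolic expression of $\Delta_i$ is obtained from that of $\widehat{\Delta}_i$ by appending $\mathcal{C}_{n+1}(\Delta_i)$, the expressions of $\Delta_1$ and $\Delta_2$ differ too. If $\widehat{\Delta}_1=\widehat{\Delta}_2=:\widehat{\Delta}$, then $\Delta_1,\Delta_2$ are two distinct elements of $\xi_{n+1}$ both contained in $\widehat{\Delta}\in\xi_n$, so Lemma~\ref{lemDisch} gives $\mathcal{C}_{n+1}(\Delta_1)\neq\mathcal{C}_{n+1}(\Delta_2)$; the two symbolic expressions share their first $n+1$ letters but differ in the last one. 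In all cases the symbolic expressions differ, completing the induction.

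\medskip

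For the second assertion: by definition the symbolic expression of $\Delta\in\xi_n$ is the sequence $\mathcal{C}_0(\Delta_0),\mathcal{C}_1(\Delta_1),\ldots,\mathcal{C}_n(\Delta_n)$ attached to the unique chain $K=\Delta_0\supseteq\Delta_1\supseteq\cdots\supseteq\Delta_n=\Delta$ with $\Delta_i\in\xi_i$; this is precisely a word of length $n+1$ over the alphabet $\Omega$, it is admissible by the very definition of admissibility (taking $k=0$ and the witnessing chain to be this one), and its initial letter is $\mathcal{C}_0(\Delta_0)=\mathcal{C}_0(K)$. The first assertion shows the assignment $\Delta\mapsto(\text{symbolic expression of }\Delta)$ is injective on $\xi_n$, which is exactly what "$\Delta$ can be identified as" means. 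There is essentially no computation here; the content is entirely in Lemma~\ref{lemDisch} together with the net structure from Lemma~\ref{lem: xi(n+1) refines xi(n)}.

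\medskip

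I expect no serious obstacle: the proof is a short induction whose only real input is Lemma~\ref{lemDisch}, already established. The one point requiring a moment's care is the bookkeeping in the inductive step — namely that the symbolic expression of $\Delta_i\in\xi_{n+1}$ is genuinely the concatenation of the symbolic expression of its parent $\widehat{\Delta}_i$ with the single new letter $\mathcal{C}_{n+1}(\Delta_i)$, which follows because the defining chain for $\Delta_i$ is the defining chain for $\widehat{\Delta}_i$ extended by $\Delta_i$ (uniqueness of the chain, from the net property). Once that is noted, both the "parents differ" and "parents coincide" cases close immediately.
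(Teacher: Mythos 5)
Your proof is correct and is exactly the argument the paper has in mind: the paper dispatches this lemma with the single line ``By Lemma \ref{lemDisch}, we immediately have the following,'' and your induction on $n$ (using Lemma \ref{lemDisch} when the parents coincide and the induction hypothesis when they differ, via the net structure of Lemma \ref{lem: xi(n+1) refines xi(n)}) is just the explicit version of that. No gaps; the bookkeeping point you flag about the defining chain of $\Delta_i$ extending that of $\widehat{\Delta}_i$ is the right one and follows from uniqueness of the chain.
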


\section{Matrix representation of \texorpdfstring{$\mu$}{mu}: proof of Theorem \ref{main thm:matrix repn}}\label{section: distribution of mu}
In this section we  study the distribution of $\mu$ on each element of $\{\Delta\in\xi_n:n\geq 0\}$. For  $n\geq 0$ and $\Delta\in\xi_n$ with $\mu(\Delta)>0$, we will express $\mu(\Delta)$ as an inner product ${\bf u}_{n, \Delta}\cdot {\bf v}_{n, \Delta}$, where ${\bf u}_{n, \Delta}$, ${\bf v}_{n,\Delta}$ are positive row  vectors with ${\bf v}_{n, \Delta}$ being determined by $\mathcal{C}_n(\Delta)$. If $n\geq 1$, we let $\widehat{\Delta}$ be the unique element of $\xi_{n-1}$ which contains $\Delta$. Then we will construct a transitive matrix $T_{\widehat{\Delta}, \Delta}=T(\mathcal{C}_{n-1}(\widehat{\Delta}), \mathcal{C}_n(\Delta))$ which depends only on  $\mathcal{C}_{n-1}(\widehat{\Delta})$ and $\mathcal{C}_n(\Delta)$ such that ${\bf u}_{n,\Delta}={\bf u}_{n-1,\widehat{\Delta}}T_{\widehat{\Delta},\Delta}$. Below we will give the detailed definitions of ${\bf u}_{n,\Delta}$, ${\bf v}_{n, \Delta}$ and $T_{\widehat{\Delta}, \Delta}$.

For $n\geq 0$, set  
\begin{equation}\label{eqdefFn}
\mathcal{F}_n=\{\Delta\in\xi_n: \mu(\Delta)>0\}.
\end{equation}
 Let $\Delta\in\mathcal{F}_n$ and assume that 
\[\Lambda_n(\Delta)=(f_1,\ldots, f_k),\quad N_n(\Delta)=(g_1,\ldots,g_\ell).\]
Then by definition
\[V_n(\Delta)=(\varphi_1,\ldots, \varphi_k), \quad U_n(\Delta)=(\psi_1,\ldots, \psi_\ell),\]
where  $\varphi_i=f_1^{-1}\circ f_i$, $\psi_j=f_1^{-1}\circ g_j$ for $1\leq i\leq k$ and  $1\leq j\leq \ell$.
Then we define  
\[\Lambda_n^*(\Delta)=(h_1,\ldots, h_{\tilde{k}}), \quad V_n^*(\Delta)=(\phi_1,\ldots, \phi_{\tilde{k}}),\]
where $h_1,\ldots,h_{\tilde{k}}$ (ranked increasingly in the order $\prec$) are those  $f\in\{f_i\}_{i=1}^k$  satisfying $\mu(f^{-1}\Delta)>0$, and $\phi_i:=f_1^{-1}\circ h_i$ for $1\leq i\leq \tilde{k}$. Let $v_n^*(\Delta)$ denote the dimension of $V_n^*(\Delta)$, i.e. $v_n^*(\Delta)=\tilde{k}$. 

We point out that  $V_n^*(\Delta)$ is  determined by  $\mathcal{C}_n(\Delta)$. To see this, first observe that $V_n^*(\Delta)$ is  obtained  by removing those entries $\varphi$ of $V_n(\Delta)$ which satisfy $\mu(\varphi^{-1}\circ f_1^{-1}\Delta)=0$ and keeping the relative positions of the other entries of $V_n(\Delta)$ unchanged. Meanwhile, we note that  $f_1^{-1}(\Delta)$ is determined by $V_n(\Delta)$ and $U_n(\Delta)$, as
\[f^{-1}_1(\Delta)=\left(\bigcap_{i=1}^k\varphi_i(K)\right)\backslash \left(\bigcup_{\psi\in\{\psi_j\}_{j=1}^{\ell}\backslash\{\varphi_i\}_{i=1}^k}\psi(K)\right).\]
Hence $V^*_n(\Delta)$ is determined by $V_n(\Delta)$ and $U_n(\Delta)$, and thus by $\mathcal{C}_n(\Delta)$.

For $I=i_1\ldots i_n\in \Sigma_n$,  write $p_I=p_{i_1}\cdots p_{i_n}$.   Iterating   \eqref{eq: similarity identity} for $n$ times gives 
\begin{equation}\label{eq:self-similar id2}
\mu=\sum_{I\in\Sigma_n}p_I\mu\circ S_I^{-1}.
\end{equation}
Then by \eqref{eq:self-similar id2} and the definition of $\Lambda_n^*(\Delta)$, we have
\begin{align}\label{eq: inner product}
\mu(\Delta)&=\sum_{I\in \Sigma_n}p_I\mu(S_I^{-1}\Delta) \nonumber\\
&=\sum_{I\in\Sigma_n: \  \mu(S_I^{-1}\Delta)>0}p_I\mu(S_I^{-1}\Delta)\nonumber \\
&=\sum_{i=1}^{\tilde{k}}\left(\sum_{I\in\Sigma_n: \ S_I=h_i}p_I\right)\mu(h_i^{-1}\Delta) \nonumber \\
&={\bf u}_{n,\Delta}\cdot {\bf v}_{n,\Delta},
\end{align}
where ${\bf u}_{n,\Delta}=\left(\sum_{I\in \Sigma_n: \ S_I=h_i}p_I\right)_{i=1}^{\tilde{k}}$ and  ${\bf v}_{n,\Delta}=\left(\mu(h_i^{-1}\Delta)\right)_{i=1}^{\tilde{k}}.$ Notice that both ${\bf u}_{n,\Delta}$ and ${\bf v}_{n,\Delta}$ are positive vectors.  Since  $h_i^{-1}(\Delta)=\phi_i^{-1}\circ f_1^{-1}(\Delta)$ for $i=1,\ldots, \tilde{k}$,  by our  argument in the preceding paragraph, we see that ${\bf v}_{n,\Delta}$ is determined  by $\mathcal{C}_n(\Delta)$.

For $n\geq 1$ and $\Delta\in\mathcal{F}_n$, let  $\widehat{\Delta}$ be the unique element of $\mathcal{F}_{n-1}$ which contains $\Delta$. Assume that 
\[\Lambda_{n-1}(\widehat{\Delta})=(u_1,\ldots, u_{k'}), \quad  \Lambda_{n-1}^*(\widehat{\Delta})=(v_1,\ldots, v_{\hat{k}}), \quad V_{n-1}^*(\widehat{\Delta})=(w_1,\ldots, w_{\hat{k}}).\]
Then we define a $v_{n-1}^*(\widehat{\Delta})\times v_{n}^*(\Delta)$ matrix $T_{\widehat{\Delta},\Delta}=(t_{j,i})_{1\leq j\leq \hat{k},\ 1\leq i\leq\tilde{k}}$
by setting
\begin{equation}\label{eq: t_ji}t_{j,i}=\begin{cases}p_r &\text{ if } \exists  r\in\{1,\ldots, m\} \text{ such that } S_r=v_{j}^{-1}\circ h_i,\\
0 &\text{ otherwise},
\end{cases}
\end{equation}
for  $1\leq j\leq \hat{k}$ and $1\leq i\leq\tilde{k}$. 

We claim  that 
\begin{equation}\label{eq: transitive identity}
{\bf u}_{n,\Delta}={\bf u}_{n-1,\widehat{\Delta}}T_{\widehat{\Delta},\Delta}.\end{equation}
To see this, for each $i\in \{1, \ldots, \tilde{k}\}$, let $J\in\Sigma_{n-1}$ and $r\in\{1,\ldots, m\}$ be such that $S_J\circ S_r=h_i$. Notice that $S_J\in\{u_j\}_{j=1}^{k'}$ by  Lemma \ref{lem: xi(n+1) refines xi(n)}(ii). Moreover, by $\Delta\subseteq\widehat{\Delta}$ and  \eqref{eq: similarity identity}, $\mu(S_J^{-1}\widehat{\Delta})\geq\mu(S_J^{-1}\Delta)\geq p_r\mu(S_r^{-1}\circ S_J^{-1}\Delta)=p_r\mu(h_i^{-1}\Delta)>0$. Hence $S_J\in \{v_j\}_{j=1}^{\hat{k}}$ by the definition of $\Lambda^*_{n-1}(\widehat{\Delta})$.
By this fact and \eqref{eq: t_ji},
\begin{align*}\sum_{I\in\Sigma_n: \ S_I=h_i}p_I=\sum_{J\in\Sigma_{n-1}, \ r\in\{1,\ldots, m\}:\  S_J\circ S_r=h_i}p_Jp_r=\sum_{j=1}^{\hat{k}}t_{j, i}\sum_{J\in\Sigma_{n-1}:\  S_J=v_j}p_J.
\end{align*}
Since  $i\in\{1,\ldots, \tilde{k}\}$ is arbitrary, this proves \eqref{eq: transitive identity}. 

Notice that for  $i\in\{1,\ldots,\tilde{k}\}$ and $j\in \{1,\ldots,\hat{k}\}$, we have
\begin{equation}\label{eq29}
 v_j^{-1}\circ h_i=w_j^{-1}\circ u_1^{-1}\circ f_1\circ \phi_i=w_j^{-1}\circ r_n(\Delta)\circ \phi_i.  
\end{equation}
Since we have shown that $V_n^*(\Delta), V_{n-1}^*(\widehat{\Delta})$ are determined by $\mathcal{C}_n(\Delta)$ and $\mathcal{C}_{n-1}(\widehat{\Delta})$ respectively, by \eqref{eq29} and the definition of $t_{j, i}$ (cf. \eqref{eq: t_ji}) we see that $T_{\widehat{\Delta},\Delta}$ is determined by   $\mathcal{C}_n(\Delta)$ and $\mathcal{C}_{n-1}(\widehat{\Delta})$. So we write $T_{\widehat{\Delta},\Delta}=T(\mathcal{C}_{n-1}(\widehat{\Delta}),\mathcal{C}_n(\Delta))$.

Let 
\[\widetilde{\Omega}=\{\mathcal{C}_n(\Delta): \Delta\in\mathcal{F}_n, n\geq 0\},\]
where  $\mathcal{F}_n$ is defined as in \eqref{eqdefFn}. 
For $\alpha\in\widetilde{\Omega}$, pick $n\geq 0$ and $\Delta\in\mathcal{F}_n$ with $\mathcal{C}_n(\Delta)=\alpha$. We have shown that $V_n^*(\Delta)$ and  ${\bf v}_{n,\Delta}$ are determined by $\alpha$ (independent of the choice of $n$ and $\Delta$).  So we can  write  ${\bf v}(\alpha)={\bf v}_{n,\Delta}$ and $v^*(\alpha)=v^*_n(\Delta)$; recall that $v^*_n(\Delta)$ is the dimension of $V_n^*(\Delta)$ and  ${\bf v}_{n,\Delta}$.

For any $\alpha,\beta\in\widetilde{\Omega}$ with  $\alpha\beta$ being  admissible, we have constructed a $v^*(\alpha)\times v^*(\beta)$ dimensional non-negative matrix $T(\alpha,\beta)$. Since $\widetilde{\Omega}$ is finite, we see that
\[\left\{T(\alpha, \beta): \alpha,\beta\in\widetilde{\Omega} \text{ and }\alpha\beta \text{ is admissible}\right\}\]
is a finite family of non-negative matrices.

Recall that ${\bf u}_{0,K}=1$. Applying \eqref{eq: transitive identity} repeatedly and  \eqref{eq: inner product},   we obtain the following.  
\begin{lem}\label{lem: induction}For any $k\geq 1$ and $\Delta\in\mathcal{F}_k$,  we have 
\[{\bf u}_{k,\Delta}=T(\gamma_0,\gamma_1)\cdots T(\gamma_{k-1},\gamma_k),\quad \mu(\Delta)=T(\gamma_0,\gamma_1)\cdots T(\gamma_{k-1},\gamma_k){\bf v}(\gamma_k)^T,
\]
where $\gamma_0\gamma_1\ldots\gamma_k$ is the symbolic expression of $\Delta$ and ${\bf v}(\gamma_k)^T$ is the transpose of ${\bf v}(\gamma_k)$.
\end{lem}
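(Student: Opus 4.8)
\textbf{Proof proposal for Lemma \ref{lem: induction}.}

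The plan is to argue by induction on $k$, unwinding the recursive identity \eqref{eq: transitive identity} down to the base case ${\bf u}_{0,K}=1$, and then combining with the inner-product formula \eqref{eq: inner product} to get the expression for $\mu(\Delta)$. Fix $k\geq 1$ and $\Delta\in\mathcal{F}_k$, and let $\gamma_0\gamma_1\ldots\gamma_k$ be the symbolic expression of $\Delta$. By definition of the symbolic expression (see Section \ref{section: symbolic expression}), there is a unique nested chain $K=\Delta_0\supseteq\Delta_1\supseteq\cdots\supseteq\Delta_k=\Delta$ with $\Delta_i\in\xi_i$ and $\mathcal{C}_i(\Delta_i)=\gamma_i$ for $0\leq i\leq k$. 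Since $\mu(\Delta)>0$ and the $\Delta_i$ are nested, each $\Delta_i$ satisfies $\mu(\Delta_i)\geq\mu(\Delta)>0$, so in fact $\Delta_i\in\mathcal{F}_i$ for every $i$; this is what lets us apply \eqref{eq: transitive identity} at each level. Applying \eqref{eq: transitive identity} with the pair $(\widehat{\Delta},\Delta)=(\Delta_{i-1},\Delta_i)$ gives ${\bf u}_{i,\Delta_i}={\bf u}_{i-1,\Delta_{i-1}}T_{\Delta_{i-1},\Delta_i}={\bf u}_{i-1,\Delta_{i-1}}T(\gamma_{i-1},\gamma_i)$, the last equality because $T_{\widehat{\Delta},\Delta}$ depends only on $\mathcal{C}_{i-1}(\Delta_{i-1})$ and $\mathcal{C}_i(\Delta_i)$, as established just before the lemma. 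Iterating from $i=k$ down to $i=1$ and using ${\bf u}_{0,K}=1$ (the base case, with $\gamma_0=\mathcal{C}_0(K)$) yields
\[
{\bf u}_{k,\Delta}=T(\gamma_0,\gamma_1)T(\gamma_1,\gamma_2)\cdots T(\gamma_{k-1},\gamma_k),
\]
which is the first assertion. (One should check that all the intermediate matrix products are dimensionally compatible: $T(\gamma_{i-1},\gamma_i)$ has $v^*(\gamma_{i-1})$ rows and $v^*(\gamma_i)$ columns, so adjacent factors multiply, and the product is a $1\times v^*(\gamma_k)$ row vector matching ${\bf u}_{k,\Delta}$, whose dimension is $v_k^*(\Delta)=v^*(\gamma_k)$.)

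For the second assertion, recall from \eqref{eq: inner product} that $\mu(\Delta)={\bf u}_{k,\Delta}\cdot{\bf v}_{k,\Delta}={\bf u}_{k,\Delta}\,{\bf v}_{k,\Delta}^T$, and that ${\bf v}_{k,\Delta}$ is determined by $\mathcal{C}_k(\Delta)=\gamma_k$, so ${\bf v}_{k,\Delta}={\bf v}(\gamma_k)$. Substituting the formula for ${\bf u}_{k,\Delta}$ just obtained gives
\[
\mu(\Delta)=T(\gamma_0,\gamma_1)\cdots T(\gamma_{k-1},\gamma_k)\,{\bf v}(\gamma_k)^T,
\]
as claimed. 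The only mild subtlety is that the word $\gamma_0\gamma_1\ldots\gamma_k$ must be admissible with each consecutive pair $\gamma_{i-1}\gamma_i$ admissible in $\widetilde{\Omega}$, so that each matrix $T(\gamma_{i-1},\gamma_i)$ is among the finite family constructed above; but this is immediate from the nested chain $\Delta_{i-1}\supseteq\Delta_i$ realizing the symbolic expression, together with the definition of admissibility and the fact that each $\Delta_i\in\mathcal{F}_i$.

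There is no real obstacle here — the lemma is essentially a bookkeeping consequence of the identity \eqref{eq: transitive identity}, which does all the work. The one point demanding a word of care is the observation that every element of the defining nested chain lies in $\mathcal{F}_i$ (not merely in $\xi_i$), which is needed because \eqref{eq: transitive identity} and the definitions of ${\bf u}_{n,\Delta}$, $\Lambda_n^*(\Delta)$ are only set up for elements of positive $\mu$-measure; this follows at once from monotonicity of $\mu$ along the chain. Everything else is a routine induction and a check of matrix dimensions.
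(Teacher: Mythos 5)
Your proof is correct and follows exactly the route the paper takes: the paper's entire proof is the remark ``Recall that ${\bf u}_{0,K}=1$. Applying \eqref{eq: transitive identity} repeatedly and \eqref{eq: inner product}, we obtain the following,'' which is precisely your induction unwound from the base case plus the inner-product formula. Your additional checks (that each $\Delta_i$ in the nested chain lies in $\mathcal{F}_i$ by monotonicity, and that the matrix dimensions match) are correct and merely make explicit what the paper leaves implicit.
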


Let $\alpha_1\ldots\alpha_n$ be an admissible word with $\alpha_i\in\widetilde{\Omega}$ for $i=1,\ldots,n$. By the same  proof of \cite[Corollary 3.4]{feng1},
\begin{equation}\label{eq: positive vector}
{\bm e}(\alpha_1)T(\alpha_1,\alpha_2)\cdots T(\alpha_{n-1},\alpha_n)
\end{equation}
is a $v^*(\alpha_n)$-dimensional positive row vector, where ${\bm e}(\alpha_1)$ is the row vector consisting of  $v^*({\alpha_1})$ many $1$'s.

Pick $k\geq 0$ and $\Delta_i\in \xi_{k+i-1}$ ($i=1,\ldots, n$)  such that  
$\Delta_1\supseteq\cdots\supseteq\Delta_{n}$
and   $\mathcal{C}_{k+i-1}(\Delta_i)=\alpha_i$ for $1\leq i\leq n$. Assume that $\Lambda^*_{k}(\Delta_1)=(h_1,\ldots, h_{v^*(\alpha_1)})$ and $\Lambda^*_{k+n-1}(\Delta_n)=\left(h_1',\ldots,h'_{v^*(\alpha_n)}\right)$.  
 By \eqref{eq: t_ji} and induction, we have the following.
\begin{lem}\label{lem: entry expression}For  $i\in\{1,\ldots, v^*(\alpha_1)\}$ and $j\in\{1,\ldots, v^*(\alpha_n)\}$, the $(i,j)$-entry of the matrix $$T(\alpha_1,\alpha_2)\cdots T(\alpha_{n-1},\alpha_n)$$ is given by 
\[\sum_{I\in\Sigma_{n-1}: \ S_I=h_i^{-1}\circ h'_j}p_I.\]
\end{lem}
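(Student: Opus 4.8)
The plan is to prove Lemma \ref{lem: entry expression} by induction on $n$, tracking how the entries of the product $T(\alpha_1,\alpha_2)\cdots T(\alpha_{n-1},\alpha_n)$ relate to compositions of the similitudes in the associated $\Lambda^*$-vectors. The base case $n=1$ is the empty product, i.e. the identity matrix, for which the asserted formula reads $\sum_{I\in\Sigma_0:\ S_I=h_i^{-1}\circ h_j'}p_I$; since $k=k+n-1$ here we have $\Delta_1=\Delta_n$ and $h_i,h_j'$ range over the same tuple $\Lambda_k^*(\Delta_1)$, so $h_i^{-1}\circ h_j'=S_\varepsilon$ precisely when $h_i=h_j'$, i.e. $i=j$ (the entries of $\Lambda_k^*$ are distinct), giving the identity matrix as required.

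For the inductive step, I would write the product as $\big(T(\alpha_1,\alpha_2)\cdots T(\alpha_{n-2},\alpha_{n-1})\big)\cdot T(\alpha_{n-1},\alpha_n)$ and apply the induction hypothesis to the first factor, whose $(i,\ell)$-entry is $\sum_{J\in\Sigma_{n-2}:\ S_J=h_i^{-1}\circ h''_\ell}p_J$, where $\Lambda^*_{k+n-2}(\Delta_{n-1})=(h''_1,\ldots,h''_{v^*(\alpha_{n-1})})$. Multiplying by the last factor $T(\alpha_{n-1},\alpha_n)=T_{\Delta_{n-1},\Delta_n}$, whose $(\ell,j)$-entry by \eqref{eq: t_ji} equals $p_r$ if $S_r=(h''_\ell)^{-1}\circ h'_j$ for some $r\in\{1,\ldots,m\}$ and $0$ otherwise, the $(i,j)$-entry of the full product becomes
\[
\sum_{\ell}\Bigg(\sum_{J\in\Sigma_{n-2}:\ S_J=h_i^{-1}\circ h''_\ell}p_J\Bigg)\cdot t^{(n-1)}_{\ell,j},
\]
where $t^{(n-1)}_{\ell,j}$ is the entry just described. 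The key point is that $S_J\circ S_r = h_i^{-1}\circ h''_\ell \circ (h''_\ell)^{-1}\circ h'_j = h_i^{-1}\circ h'_j$, so each nonzero term contributes $p_J p_r = p_{Jr}$ with $Jr\in\Sigma_{n-1}$ and $S_{Jr}=h_i^{-1}\circ h'_j$; conversely every $I=Jr\in\Sigma_{n-1}$ with $S_I=h_i^{-1}\circ h'_j$ arises this way, with $S_J=h_i^{-1}\circ h''_\ell$ for a \emph{unique} $\ell$ because the entries $h''_\ell$ of $\Lambda^*_{k+n-2}(\Delta_{n-1})$ are distinct — one must check $S_J$ does land in this tuple, which follows from Lemma \ref{lem: xi(n+1) refines xi(n)}(ii) together with the positivity argument used in establishing \eqref{eq: transitive identity} (namely $\mu(S_J^{-1}\Delta_{n-1})\geq p_r\mu(h_j'^{-1}\Delta_n)>0$, using $\Delta_n\subseteq\Delta_{n-1}$). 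Summing over $\ell$ and regrouping by $I=Jr$ yields exactly $\sum_{I\in\Sigma_{n-1}:\ S_I=h_i^{-1}\circ h'_j}p_I$, completing the induction.

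The main obstacle I anticipate is the bookkeeping around well-definedness: ensuring that the intermediate similitude $S_J$ genuinely belongs to $\Lambda^*_{k+n-2}(\Delta_{n-1})$ (not merely to $\Lambda_{k+n-2}(\Delta_{n-1})$ or to $\Phi_*$ abstractly), so that the index $\ell$ exists, and that it is unique so no double-counting occurs. Both rely on the distinctness of the entries of the $\Lambda^*$-vectors and on the chain of measure-positivity inequalities from the proof of \eqref{eq: transitive identity}; once these are in place the combinatorial identification of $\Sigma_{n-1}$-words with pairs $(J,r)\in\Sigma_{n-2}\times\{1,\ldots,m\}$ and the factorization $p_{Jr}=p_Jp_r$ are routine. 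I would phrase the argument so that it mirrors the derivation of \eqref{eq: transitive identity} as closely as possible, citing that computation rather than repeating it.
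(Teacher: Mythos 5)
Your proof is correct and follows exactly the route the paper intends: the paper justifies this lemma only by the phrase ``By \eqref{eq: t_ji} and induction,'' and your argument is that induction carried out in full, including the two points the paper leaves implicit (that the intermediate similitude $h_i\circ S_J$ lies in $\Lambda^*_{k+n-2}(\Delta_{n-1})$, via Lemma \ref{lem: xi(n+1) refines xi(n)}(ii) and the measure-positivity chain from \eqref{eq: transitive identity}, and that the index $\ell$ is unique by distinctness of the entries). No gaps.
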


In the rest of this section, we prove Theorem \ref{main thm:matrix repn} by using Lemma \ref{lem: induction} and a strategy employed  in  the proof of \cite[Lemma 4.1]{feng1}.

Write $\widetilde{\Omega}=\{\eta_1,\ldots, \eta_s\}$ and  let  $N=\sum_{i=1}^sv^*(\eta_i)$. Without loss of generality, assume $\eta_1=\mathcal{C}_0(K)$.  In the following, we construct a family of   $N\times N$ non-negative matrices $\{M_i\}_{i=1}^s$ and a family of  $N$-dimensional positive row vectors $\{\mathbf{w}_i\}_{i=1}^s$. 

For  $i\in \{1,\ldots,s\}$,   we define $M_i$ to  be the partitioned matrix 
\[M_i=(U_{k,j}^i)_{1\leq k, j\leq s},\]
where for $k, j\in\{1,\ldots,s\}$, $U_{k, j}^i$ is a $v^*(\eta_k)\times v^*(\eta_j)$ matrix defined by 
\[U_{k,j}^i=\begin{cases}T(\eta_k,\eta_j) & \text{ if }j=i \text{ and }\eta_k\eta_j \text{ is admissible,}\\
{\bm 0} &\text{ otherwise.}\end{cases}\]
We define a partitioned row vector ${\bf w}_i=(W_j^i)_{1\leq j\leq s}$, where for $j\in\{1,\ldots,s\}$, 
\[W_j^i=\begin{cases}{\bf v}(\eta_j) & \text{ if }j=i,\\
{\bm e}(\eta_j) &\text{ otherwise,}
\end{cases}\]
where ${\bm e}(\eta_j)$ denotes the row vector  consisting of $v^*(\eta_j)$ many $1$'s. It is clear that  ${\bf w}_i$ is an $N$-dimensional positive row  vector. 

\begin{proof}[Proof of Theorem \ref{main thm:matrix repn}]
It follows directly from Lemma \ref{lem: induction}, the  definitions of $\{M_i\}_{i=1}^s$ and $\{\mathbf{w}_i\}_{i=1}^s$, and the product formula of
partitioned matrices. \qed
\end{proof}

\section{Application to \texorpdfstring{$L^q$}{Lq}-spectrum: proof of Theorem \ref{main thm}}\label{section: squared matrices}

To prove Theorem \ref{main thm}, we adopt the same strategy as in \cite{feng1}. Our main target in this section is to show that $\tau(q)=\frac{P(q)}{\log\rho}$ for $q>0$ (cf. Theorem \ref{thm: t(q) and P(q)}), where $P(q)$ is the pressure function for a certain family of squared matrices and $\rho\in (0,1)$ is the common similarity ratio of the similitudes in the IFS. Then Theorem \ref{main thm} follows from a result of Feng and Lau \cite{fenglau1} on the differentiability of $P(q)$ on $(0,\infty)$ under an irreducibility condition.

For this purpose, in Subsection \ref{subs4.1} we construct a family of squared matrices from an essential class $\widehat{\Omega}$ of $\widetilde{\Omega}$ and prove Lemma \ref{lem: uniform vector}. Then we prove in Subsection \ref{subs4.2} that the sum of these matrices is irreducible, which is needed in applying the result of Feng and Lau. Finally, in the last subsection of this section, we prove Theorem \ref{thm: t(q) and P(q)} and completes the proof of Theorem \ref{main thm}. 

\subsection{Essential class \texorpdfstring{$\widehat{\Omega}$}{Omega} of \texorpdfstring{$\widetilde{\Omega}$}{Omega1}}\label{subs4.1}

Recall that  $$\widetilde{\Omega}=\{\mathcal{C}_n(\Delta): \Delta\in\F_n, n\geq 0\}=\{\eta_1,\ldots,\eta_s\},$$ where $\F_n=\{\Delta\in\xi_n: \mu(\Delta)>0\}$. We  call a non-empty subset $\widehat{\Omega}$ of $\widetilde{\Omega}$ an {\em essential class} of $\widetilde{\Omega}$ if $\widehat{\Omega}$  satisfies: (i) $\{\beta\in\widetilde{\Omega}: \alpha\beta\text{ is admissible}\}\subseteq\widehat{\Omega}$ whenever $\alpha\in\widehat{\Omega}$; (ii) for any $\alpha,\beta\in\widehat{\Omega}$, there exists an admissible word $\alpha_1\ldots\alpha_n$ such that $\alpha_1=\alpha$, $\alpha_n=\beta$ and $\alpha_i\in\widehat{\Omega}$ for $1\leq i\leq n$. Such $\widehat{\Omega}$ always exists, see e.g. \cite[Lemma 1.1]{Seneta}.
From now on, we fix an essential class $\widehat{\Omega}$ of $\widetilde{\Omega}$. 

Without loss of generality, we assume that  $\widehat{\Omega}=\{\eta_1,\ldots, \eta_t\}$ for some $1\leq t\leq s$. Let $L=\sum_{i=1}^tv^*(\eta_i)$. Using the same method as in  Section \ref{section: distribution of mu},  we construct a family of  $L\times L$ non-negative matrices $\{M_i\}_{i=1}^t$ and a family of  $L$-dimensional positive row vectors $\{\mathbf{w}_i\}_{i=1}^t$.

Pick $n_0\geq 1$ and $\Delta_0\in\F_{n_0}$ so that $\mathcal{C}_{n_0}(\Delta_0)=\eta_1$. In the following, we will consider the distribution of $\mu$ on the elements of $\F_n$ for $n\geq n_0$ which are contained in $\Delta_0$. 

For $n\geq n_0$ and $\Delta\in\F_n$ with  $\Delta\subseteq\Delta_0$, we define a partitioned vector 
\[\widehat{{\bf u}}_{n,\Delta}=(U_1,\ldots, U_t),\]
where for $i\in\{1,\ldots, t\}$, $U_i$ is a $v^*(\eta_i)$-dimensional row vector defined by
\[U_i=\begin{cases}{\bf u}_{n,\Delta}  & \text{ if }\mathcal{C}_n(\Delta)=\eta_i,\\
{\bm 0} &\text{ otherwise}.
\end{cases}\]
Clearly, $\widehat{{\bf u}}_{n,\Delta}$ is of $L$-dimensional. 

Let $\Theta$ denote the symbolic expression of $\Delta_0$. By Lemma \ref{lem: induction}, the  definitions of $\{M_i\}_{i=1}^t$ and $\{\mathbf{w}_i\}_{i=1}^t$, and the product formula of partitioned matrices, we have
\begin{lem}\label{lem: uniform vector}{\rm (i)} Let $n\geq 0$ and $\Delta\in\F_{n+n_0}$  with $\Delta\subseteq \Delta_0$. Then
\[\mu(\Delta)=\widehat{{\bf u}}_{n_0,\Delta_0}M_{i_1}\cdots M_{i_n}{\bf w}_{i_n}^T,\]
where $\Theta \eta_{i_1}\ldots\eta_{i_n}$ is the symbolic expression of $\Delta$.

{\rm (ii)} A word $\eta_{j_1}\ldots \eta_{j_k}$ over $\widehat{\Omega}$ is admissible if and only if $M_{j_1}\cdots M_{j_k}\neq {\bm 0}$. 
\end{lem}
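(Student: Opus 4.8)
The plan is to prove Lemma \ref{lem: uniform vector} in two parts, exploiting the matrix-representation machinery already developed in Section \ref{section: distribution of mu} together with the definition of the essential class $\widehat{\Omega}$.

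\textbf{Part (i).} First I would unwind the definitions. Fix $n\geq 0$ and $\Delta\in\F_{n+n_0}$ with $\Delta\subseteq\Delta_0$, and let $\Theta\eta_{i_1}\ldots\eta_{i_n}$ be the symbolic expression of $\Delta$, where $\Theta=\gamma_0\gamma_1\ldots\gamma_{n_0}$ is the symbolic expression of $\Delta_0$ (so $\gamma_{n_0}=\eta_1$). Since $\widehat{\Omega}$ is an essential class and $\eta_1\in\widehat{\Omega}$, property (i) of an essential class forces $\eta_{i_1},\ldots,\eta_{i_n}\in\widehat{\Omega}$, so the letters $i_1,\ldots,i_n$ really do index matrices among $\{M_j\}_{j=1}^t$. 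By Lemma \ref{lem: induction} applied to $\Delta\in\F_{n+n_0}$,
\[
\mu(\Delta)=T(\gamma_0,\gamma_1)\cdots T(\gamma_{n_0-1},\gamma_{n_0})\,T(\eta_1,\eta_{i_1})\,T(\eta_{i_1},\eta_{i_2})\cdots T(\eta_{i_{n-1}},\eta_{i_n})\,{\bf v}(\eta_{i_n})^T,
\]
while the same lemma applied to $\Delta_0\in\F_{n_0}$ gives ${\bf u}_{n_0,\Delta_0}=T(\gamma_0,\gamma_1)\cdots T(\gamma_{n_0-1},\gamma_{n_0})$. Thus $\mu(\Delta)={\bf u}_{n_0,\Delta_0}\,T(\eta_1,\eta_{i_1})\cdots T(\eta_{i_{n-1}},\eta_{i_n})\,{\bf v}(\eta_{i_n})^T$. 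It then remains to check that the block-matrix bookkeeping converts this into $\widehat{{\bf u}}_{n_0,\Delta_0}M_{i_1}\cdots M_{i_n}{\bf w}_{i_n}^T$. This is exactly the computation already carried out in the proof of Theorem \ref{main thm:matrix repn}: $\widehat{{\bf u}}_{n_0,\Delta_0}$ has ${\bf u}_{n_0,\Delta_0}$ in the block indexed by $\eta_1$ (since $\mathcal{C}_{n_0}(\Delta_0)=\eta_1$) and zeros elsewhere; by the definition of $M_i$, right-multiplication by $M_{i_1}$ picks out the block $T(\eta_1,\eta_{i_1})$ and places the result in the $\eta_{i_1}$-block (using that $\eta_1\eta_{i_1}$ is admissible, which holds because $\Delta\subseteq\Delta_0$ realises this subword); inductively, after multiplying by $M_{i_1}\cdots M_{i_n}$ the only nonzero block is the $\eta_{i_n}$-block, equal to ${\bf u}_{n_0,\Delta_0}T(\eta_1,\eta_{i_1})\cdots T(\eta_{i_{n-1}},\eta_{i_n})$; finally pairing with ${\bf w}_{i_n}$, whose $\eta_{i_n}$-block is ${\bf v}(\eta_{i_n})$, recovers $\mu(\Delta)$. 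Here one also uses \eqref{eq: positive vector} to know these vectors are genuinely positive, though positivity is not strictly needed for the identity itself.

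\textbf{Part (ii).} For the ``only if'' direction, suppose $\eta_{j_1}\ldots\eta_{j_k}$ is admissible. By the argument of Part (i) (or directly from Lemma \ref{lem: induction}), there is a chain $\Delta_1\supseteq\cdots\supseteq\Delta_k$ with $\mathcal{C}_{\cdot}(\Delta_\ell)=\eta_{j_\ell}$ and $\mu(\Delta_k)>0$ expressible through $T(\eta_{j_1},\eta_{j_2})\cdots T(\eta_{j_{k-1}},\eta_{j_k}){\bf v}(\eta_{j_k})^T$ times a positive vector; since $\mu(\Delta_k)>0$, this product is nonzero, hence $M_{j_1}\cdots M_{j_k}\neq{\bm 0}$ by the block structure. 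For the ``if'' direction I would argue contrapositively: if $\eta_{j_1}\ldots\eta_{j_k}$ is not admissible, I claim $M_{j_1}\cdots M_{j_k}={\bm 0}$. The key point is that admissibility is ``subword-closed'' in the sense that $\eta_{j_1}\ldots\eta_{j_k}$ fails to be admissible precisely when some length-two subword $\eta_{j_\ell}\eta_{j_{\ell+1}}$ fails to be admissible — this follows from Lemma \ref{lem: char vector}, since admissibility of a word is detected level-by-level by the characteristic vectors, so if every consecutive pair extends then the whole chain of nested $\Delta$'s can be built. Given a non-admissible pair $\eta_{j_\ell}\eta_{j_{\ell+1}}$, the corresponding block in $M_{j_\ell}M_{j_{\ell+1}}$-type product vanishes by the definition of the $M_i$ (the relevant $U^i_{k,j}$ entry is ${\bm 0}$ since $\eta_k\eta_j$ is not admissible), and a nonzero block can never be created by multiplying block-monomial matrices of this special form; hence the full product is ${\bm 0}$.

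\textbf{Main obstacle.} The routine block-matrix computation in Part (i) is the longest but not the subtle part; the genuine content is the ``if'' direction of Part (ii), where I need the combinatorial fact that non-admissibility of a word is witnessed by a non-admissible consecutive pair. This requires invoking Lemma \ref{lem: char vector} carefully: admissibility was \emph{defined} via the existence of a nested chain of cells with prescribed characteristic vectors, and I must verify that local compatibility (each $\eta_{j_\ell}\eta_{j_{\ell+1}}$ admissible, i.e. realisable by \emph{some} nested pair) can be glued into a global chain, which is exactly what Lemma \ref{lem: char vector} guarantees by letting one reuse the same cell $\Delta_{\ell+1}$ at level $\ell+1$ regardless of which earlier chain produced the matching characteristic vector. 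I expect verifying this gluing step, and confirming it is symmetric to the ``only if'' direction, to be where the proof needs the most care; everything else is bookkeeping already modeled on \cite[Corollary 3.4, Lemma 4.1]{feng1} and the proof of Theorem \ref{main thm:matrix repn}.
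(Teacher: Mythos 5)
Your Part (i) and the ``if'' direction of Part (ii) are correct and follow exactly the route the paper intends (the paper's own proof is a one-line appeal to Lemma \ref{lem: induction}, the definitions of $\{M_i\}$ and $\{{\bf w}_i\}$, and block multiplication); in particular your gluing argument via Lemma \ref{lem: char vector}, showing that non-admissibility of a word is always witnessed by a non-admissible consecutive pair, is the right key observation.

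There is, however, a genuine jump in the ``only if'' direction of Part (ii). A word of $k$ letters $\eta_{j_1}\ldots\eta_{j_k}$ is matched with a product of $k$ matrices $M_{j_1}\cdots M_{j_k}$, and every potentially nonzero block of this product has the form $T(\eta_\ell,\eta_{j_1})T(\eta_{j_1},\eta_{j_2})\cdots T(\eta_{j_{k-1}},\eta_{j_k})$ --- one more $T$-factor than the $(k-1)$-fold product $T(\eta_{j_1},\eta_{j_2})\cdots T(\eta_{j_{k-1}},\eta_{j_k})$ whose nonvanishing you deduce from $\mu(\Delta_k)>0$. That extra factor is present only if there exists some $\eta_\ell\in\widehat{\Omega}$ with $\eta_\ell\eta_{j_1}$ admissible; if no such predecessor existed, $M_{j_1}$ itself would be the zero matrix (the case $k=1$ makes this plain: the lemma asserts $M_{j_1}\neq{\bm 0}$ for every $j_1\leq t$). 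So ``hence $M_{j_1}\cdots M_{j_k}\neq{\bm 0}$ by the block structure'' does not close the argument as written. The fix is to use property (ii) of the essential class to produce $\eta_\ell\in\widehat{\Omega}$ with $\eta_\ell\eta_{j_1}$ admissible (take an admissible path inside $\widehat{\Omega}$ ending at $\eta_{j_1}$ of length at least two, which exists because every letter of $\widehat{\Omega}$ has a successor in $\widehat{\Omega}$ and $\widehat{\Omega}$ is strongly connected), glue it onto $\eta_{j_1}\ldots\eta_{j_k}$ via Lemma \ref{lem: char vector}, and then apply \eqref{eq: positive vector} to the extended admissible word $\eta_\ell\eta_{j_1}\ldots\eta_{j_k}$ to conclude that the $(\ell,j_k)$ block is nonzero. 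With that insertion your proof is complete.
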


\subsection{Irreducibility of \texorpdfstring{$\sum_{i=1}^tM_i$}{sum Mi}}\label{subs4.2}

 This subsection is devoted to proving the following result, which is a key step to prove Theorem \ref{main thm}.

\begin{prop}\label{prop: irreducible}Let $H=\sum_{i=1}^t M_i$. Then $H$ is irreducible. That is, there exists a positive  integer $r$ such that all the entries of  $\sum_{i=1}^r H^i$ are positive.
\end{prop}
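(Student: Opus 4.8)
The plan is to show that the matrix $H=\sum_{i=1}^t M_i$, indexed by the "slots" attached to the states $\eta_1,\ldots,\eta_t$ of the essential class $\widehat{\Omega}$, is irreducible in the usual sense of nonnegative matrix theory: for every pair of indices there is a power of $H$ whose corresponding entry is positive. By Lemma~\ref{lem: uniform vector}(ii), a word $\eta_{j_1}\ldots\eta_{j_k}$ over $\widehat{\Omega}$ is admissible precisely when $M_{j_1}\cdots M_{j_k}\neq\bm 0$, and by \eqref{eq: positive vector} (the analogue of \cite[Corollary~3.4]{feng1}) whenever a product $M_{j_1}\cdots M_{j_k}$ is nonzero, the appropriate sub-block is in fact strictly positive. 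Combined with the irreducibility property (ii) of an essential class — any two $\alpha,\beta\in\widehat{\Omega}$ are joined by an admissible word staying in $\widehat{\Omega}$ — this will reduce the claim to a purely combinatorial statement about reachability, plus a lemma guaranteeing that each block $v^*(\eta_i)$ is genuinely "connected" internally.

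First I would fix the block structure: $H$ is an $L\times L$ matrix partitioned into $t^2$ blocks $H_{k,j}$ of size $v^*(\eta_k)\times v^*(\eta_j)$, with $H_{k,j}=T(\eta_k,\eta_j)$ when $\eta_k\eta_j$ is admissible and $\bm 0$ otherwise. I want to show: given indices $a$ (in block $k$) and $b$ (in block $j$), there is $n$ with $(H^n)_{a,b}>0$. Using essential-class property (ii), pick an admissible word $\eta_k=\alpha_1\ldots\alpha_n=\eta_j$ in $\widehat{\Omega}$; then $M_{\alpha_2}\cdots M_{\alpha_n}$ contributes to $H^{n-1}$ a block which, by the positivity statement \eqref{eq: positive vector}, has the $(k,j)$-block entirely positive — so in particular the $(a,b)$ entry is positive. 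The one subtlety is that \eqref{eq: positive vector} asserts positivity of a vector $\bm e(\alpha_1)T(\alpha_1,\alpha_2)\cdots T(\alpha_{n-1},\alpha_n)$, i.e. that \emph{some} row-combination is positive; to get every entry of the $(k,j)$-block positive I would instead argue directly via Lemma~\ref{lem: entry expression}: the $(i,j)$-entry of $T(\alpha_1,\alpha_2)\cdots T(\alpha_{n-1},\alpha_n)$ equals $\sum_{I\in\Sigma_{n-1},\ S_I=h_i^{-1}\circ h_j'}p_I$, which is positive exactly when $h_i^{-1}\circ h_j'\in\Phi_{n-1}$; and the defining property of $\Lambda^*$, together with the fact that $\mu$ gives positive mass to the relevant subpieces, forces such an $I$ to exist for each $i,j$ once the word is admissible. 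This is exactly the argument behind \cite[Corollary~3.4]{feng1}, adapted.

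So the key steps, in order, are: (1) record the partitioned form of $H$ and of its powers; (2) invoke essential-class property (ii) to get, for any two states $\eta_k,\eta_j\in\widehat{\Omega}$, an admissible connecting word within $\widehat{\Omega}$; (3) translate admissibility of that word into non-vanishing of the corresponding product of $M_i$'s via Lemma~\ref{lem: uniform vector}(ii); (4) upgrade non-vanishing to strict positivity of the entire relevant block using the $\Lambda^*$/Lemma~\ref{lem: entry expression} argument (the $d$-dimensional analogue of \cite[Corollary~3.4]{feng1}); (5) conclude that $\sum_{i=1}^r H^i$ has all entries positive for $r=L$ (or any bound exceeding the needed path lengths), which is the stated definition of irreducibility. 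A clean way to package (5) is: for each ordered pair $(a,b)$ there is $n_{ab}\le L$ with $(H^{n_{ab}})_{a,b}>0$ — here one may need to pad the connecting word, using that $\eta_j$ lies in an essential class so $\eta_j\eta_{j'}$ is admissible for some $j'\in\widehat{\Omega}$, to adjust lengths — hence $\big(\sum_{i=1}^{L}H^i\big)_{a,b}>0$.

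The main obstacle I anticipate is step (4): making sure that admissibility of a word over $\widehat{\Omega}$ really yields \emph{full} positivity of the associated block $T(\alpha_1,\alpha_2)\cdots T(\alpha_{n-1},\alpha_n)$, not merely positivity of one distinguished row or of the product against $\bm e(\alpha_1)$. This requires tracing through how $\Lambda_n^*$ is defined (only keeping $f\in\Lambda_n(\Delta)$ with $\mu(f^{-1}\Delta)>0$) and checking that for admissible words the set $\{I\in\Sigma_{n-1}: S_I=h_i^{-1}\circ h_j'\}$ is nonempty for \emph{every} valid $(i,j)$ — i.e. that every surviving "left neighbor" $h_i$ of $\Delta_1$ is connected through the IFS to every surviving piece $h_j'$ of $\Delta_n$. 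This is where the finite type condition and the net structure of $\{\xi_n\}$ (Lemma~\ref{lem: xi(n+1) refines xi(n)}) do the real work, exactly as in \cite{feng1}; the argument should go through verbatim once the bookkeeping of the linear order $\prec$ and Lemma~\ref{lemporder} is in place, but it is the step that needs care rather than citation. A secondary, minor obstacle is the length-padding in step (5), which uses essential-class property (i)–(ii) to keep all intermediate letters in $\widehat{\Omega}$ while adjusting the word length.
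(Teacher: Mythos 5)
Your overall architecture (essential-class connectivity, positivity of blocks, then padding word lengths to a common bound) matches the paper's, and you correctly isolate the crux at your step (4). But your proposed resolution of that step is where the proof breaks down. You claim that once $\eta_{i_1}\ldots\eta_{i_n}$ is admissible, the set $\{I\in\Sigma_{n-1}: S_I=h_i^{-1}\circ h_j'\}$ is nonempty for \emph{every} pair $(i,j)$, so that the whole block $T(\eta_{i_1},\eta_{i_2})\cdots T(\eta_{i_{n-1}},\eta_{i_n})$ is strictly positive. This is not true in general, and it is not what \eqref{eq: positive vector} (or \cite[Corollary 3.4]{feng1}) says: that statement only asserts positivity of ${\bm e}(\alpha_1)T(\alpha_1,\alpha_2)\cdots T(\alpha_{n-1},\alpha_n)$, i.e.\ that every \emph{column} of the product has at least one positive entry. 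For a fixed admissible word there is no reason why a given surviving map $h_i\in\Lambda^*_k(\Delta_1)$ should factor every surviving map $h_j'$ of the descendant as $h_j'=h_i\circ S_I$; an individual entry of the product can perfectly well vanish. So ``admissible $\Rightarrow$ full-block positivity'' is a genuine gap, not bookkeeping.

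What the paper actually proves is weaker and requires a construction your proposal does not contain. Lemma \ref{lem: k row is positive} produces, for each fixed row index $k$, a \emph{specially chosen} admissible word (depending on $k$) whose associated product has positive $k$-th row; this already suffices for irreducibility via the essential-class connectivity. The construction rests on Lemma \ref{lem: mu inverse =1}: for each $f\in\Lambda^*_n(\Delta)$ there exists $I\in\Sigma_*$ with $\mu(S_I^{-1}\circ f^{-1}\Delta)=1$. That lemma is proved by maximizing $\mu(S_J^{-1}\circ f^{-1}\Delta)$ over the finite value set supplied by Lemma \ref{lem: finite many mu} and then invoking the Borel density lemma (Lemma \ref{lem: mu(E)=1}) on the coding space. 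One then picks $\Delta_2$ maximizing $\#\Lambda$ and $\#N$ among descendants of $\Delta_0$ and shows that $\Delta=h_k\circ S_I(\Delta_2)$ is itself an element of the partition, all of whose $\Lambda^*$-maps factor through $h_k$ --- which is exactly what makes the entire $k$-th row positive. The paper explicitly flags this Borel-density step as the new ingredient needed in $\R^d$ (it is unnecessary in \cite{feng1} because there each basic net interval has interior meeting $K$); your proposal omits it entirely, and without it your step (4) cannot be completed.
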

 
 To prove Proposition \ref{prop: irreducible}, we first give several lemmas. The following result is a consequence of the net structure of $\{\xi_n\}_{n=0}^{\infty}$ and the FTC.
 \begin{lem}\label{lem: finite many mu} $\#\left\{\mu(S_I^{-1}\circ f^{-1}\Delta): I\in\Sigma_*, f\in\Lambda_n(\Delta), \Delta\in\xi_n, n\in\N\right\}<\infty$.
 \end{lem}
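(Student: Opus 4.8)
The plan is to show that the quantity $\mu(S_I^{-1}\circ f^{-1}\Delta)$, as $I$ ranges over $\Sigma_*$, $f\in\Lambda_n(\Delta)$, $\Delta\in\xi_n$ and $n\in\N$, can only take finitely many values, by reducing each such number to a value of $\mu$ on a ``normalized'' piece that depends only on finitely much data. First I would fix $n$, $\Delta\in\xi_n$ and $f\in\Lambda_n(\Delta)$, and observe that $f^{-1}\Delta$ is, up to the similitude $f_1^{-1}$ (where $f_1$ is the first entry of $\Lambda_n(\Delta)$), determined by the pair $(V_n(\Delta),U_n(\Delta))$ via the formula $f_1^{-1}(\Delta)=\bigl(\bigcap_i\varphi_i(K)\bigr)\setminus\bigl(\bigcup_{\psi}\psi(K)\bigr)$ already recorded in Section \ref{section: distribution of mu}. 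Since $\{(V_n(\Delta),U_n(\Delta)):\Delta\in\xi_n,n\ge0\}$ is a finite set, the family of sets $\{f_1^{-1}\Delta\}$ is finite up to this reindexing, and each $f^{-1}\Delta=\varphi^{-1}\circ f_1^{-1}\Delta$ for some $\varphi\in\Gamma$; hence $\{\mu(f^{-1}\Delta)\}$ ranges over a finite set.

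Next I would handle the extra $S_I^{-1}$. Using the self-similarity identity \eqref{eq:self-similar id2}, for $I\in\Sigma_j$ one has $\mu(S_I^{-1}\circ f^{-1}\Delta)$ related to $\mu$ evaluated on $S_I^{-1}$ applied to a set from the finite family above; but more usefully, I would argue that $S_I(K)\cap f^{-1}\Delta$ is either empty (giving value $0$) or, by the FTC applied at level $j$ together with the fact that $f^{-1}\Delta\subseteq K=\bigcup_{J\in\Sigma_j}S_J(K)$, the set $S_I^{-1}(f^{-1}\Delta\cap S_I(K))$ coincides with $g^{-1}(f^{-1}\Delta\cap S_I(K))$ for suitable $g=S_J$ with $S_I^{-1}\circ S_J\in\Gamma$, reducing again to $\Gamma$-translates of members of the finite family of normalized pieces $f_1^{-1}\Delta$. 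The key point is that the FTC forces all the relevant ``relative positions'' $S_I^{-1}\circ S_J$ to lie in the fixed finite set $\Gamma$, so that after normalization only finitely many distinct sets (hence finitely many distinct $\mu$-values) can occur. Concretely, I expect to show $\mu(S_I^{-1}\circ f^{-1}\Delta)\in\{\mu(\gamma^{-1}\circ f_1^{-1}\Delta'):\gamma\in\Gamma\cup\Gamma\Gamma,\ (V_n(\Delta'),U_n(\Delta'))\text{ in the finite set}\}$, which is finite.

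The main obstacle I anticipate is making the second step precise: one must check that intersecting $f^{-1}\Delta$ with a cylinder $S_I(K)$ and then pulling back by $S_I^{-1}$ genuinely lands in the finite list, rather than producing a new set outside it. This requires carefully invoking the net/partition structure from Lemma \ref{lem: xi(n+1) refines xi(n)} and Lemma \ref{lem2-3} to see that $f^{-1}\Delta\cap S_I(K)$, when nonempty, is a union of elements of $\xi_{n+j}$ sitting inside $\Delta$, and then applying the FTC at the appropriate level to identify $S_I^{-1}$ with an element of $\Gamma$ composed with one of the finitely many ``shapes.'' Once this bookkeeping is in place, finiteness is immediate from the finiteness of $\Gamma$ and of the set of characteristic-vector shapes. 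I would phrase the argument as: there is a finite collection $\mathcal{E}$ of subsets of $K$ such that every set of the form $S_I^{-1}\circ f^{-1}\Delta$ (when of positive measure) equals $\gamma(E)$ for some $\gamma$ in a fixed finite subset of $\mathcal{S}$ and some $E\in\mathcal{E}$; then $\mu(\gamma(E))$ takes finitely many values because both $\gamma$ and $E$ do, using $\mu=\sum_{i}p_i\mu\circ S_i^{-1}$ to absorb any residual similitude factor into a bounded product of the $p_i$'s times a $\mu$-value on $\mathcal{E}$.
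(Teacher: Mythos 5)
Your overall strategy --- normalize $f^{-1}\Delta$ via the finitely many characteristic vectors and use the FTC to control relative positions --- is the same as the paper's, but as written the argument has a concrete error and a missing counting step at its core. Write $k=|I|$ and note $S_I^{-1}\circ f^{-1}=(f\circ S_I)^{-1}$ with $f\circ S_I\in\{S_W:W\in\Sigma_{n+k}\}$. Since $\Delta$ is the disjoint union of its offspring in $\xi_{n+k}$ (Lemma \ref{lem: xi(n+1) refines xi(n)}), the quantity in question is the \emph{sum}
\[
\mu(S_I^{-1}\circ f^{-1}\Delta)=\sum_{\Delta'\in\F_{n+k}:\ \Delta'\subseteq\Delta,\ f\circ S_I\in\Lambda^*_{n+k}(\Delta')}\mu\bigl((f\circ S_I)^{-1}\Delta'\bigr),
\]
and each summand does lie in the finite set $\left\{\mu(g^{-1}\Delta'): g\in\Lambda_\ell^*(\Delta'),\ \Delta'\in\F_\ell,\ \ell\in\N\right\}$, exactly as your first paragraph argues. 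But your concrete claim that $\mu(S_I^{-1}\circ f^{-1}\Delta)$ itself equals a single value $\mu(\gamma^{-1}\circ f_1^{-1}\Delta')$ is false in general: several offspring can contribute. Finiteness of the set of possible \emph{sums} does not follow from finiteness of the set of possible summands unless you also bound the \emph{number} of summands uniformly; this is the step your proposal never supplies, and it is where the FTC is really used. By \eqref{SFTC}, $f\circ S_I(K)$ meets at most $\#\Gamma$ of the sets $S_J(K)$ with $J\in\Sigma_{n+k}$, hence $f\circ S_I\in\Lambda_{n+k}(\Delta')$ for at most $2^{\#\Gamma}$ elements $\Delta'$ of $\xi_{n+k}$, so the sum has at most $2^{\#\Gamma}$ terms and the conclusion follows.

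A secondary problem is the assertion that ``$S_I^{-1}(f^{-1}\Delta\cap S_I(K))$ coincides with $g^{-1}(f^{-1}\Delta\cap S_I(K))$ for suitable $g=S_J$'': two similitudes agreeing on a (non-degenerate) set must be equal, so this identity is not meaningful, and the FTC should be applied at level $n+k$ to compare $f\circ S_I$ with the elements of $\Lambda_{n+k}(\Delta')$, not at level $k$ inside $K$. Your set-level reformulation (every $S_I^{-1}\circ f^{-1}\Delta\cap K$ belongs to a fixed finite collection $\mathcal{E}$) can be made correct, but only after taking $\mathcal{E}$ to consist of unions of at most $2^{\#\Gamma}$ normalized offspring shapes --- i.e., after the same counting bound. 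The closing appeal to $\mu=\sum_{i}p_i\mu\circ S_i^{-1}$ to ``absorb a residual similitude'' is neither needed nor a substitute for that bound.
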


\begin{proof} First, fix $n\in\N, \Delta\in\xi_n$ and $f\in\Lambda_n(\Delta)$. Let $k\in \N$ and $I\in\Sigma_k$. By Lemma \ref{lem: xi(n+1) refines xi(n)}, we have 
\begin{equation}\label{eqDD'}
\Delta=\bigcup_{\Delta'\in\xi_{n+k}:\ \Delta'\subseteq\Delta}\Delta',
\end{equation}
 and the sets in the above union are mutually disjoint.  

If $\mu(S_I^{-1}\circ f^{-1}\Delta)>0$, then  we have
\begin{align}\mu(S_I^{-1}\circ f^{-1}\Delta)&=\sum_{\Delta'\in\xi_{n+k}:\ \Delta'\subseteq \Delta}\mu(S_I^{-1}\circ f^{-1}\Delta') \qquad (\text{by }\eqref{eqDD'})\nonumber\\
&=\sum_{\Delta'\in\xi_{n+k}:\ \Delta'\subseteq\Delta, \ \mu(S_I^{-1}\circ f^{-1}\Delta')>0}\mu(S_I^{-1}\circ f^{-1}\Delta') \nonumber\\
&=\sum_{\Delta'\in\F_{n+k}:\ \Delta'\subseteq\Delta,\ f\circ S_I\in\Lambda_{n+k}^*(\Delta')}\mu(S_I^{-1}\circ f^{-1}\Delta'), \label{eq725}
\end{align}
where \eqref{eq725} is due to  the definitions of $\Lambda_{n+k}^*(\Delta')$ for $\Delta'\in\mathcal{F}_{n+k}$. 
By the FTC \eqref{SFTC}, $f\circ S_I(K)$ intersects at most $\#\Gamma$ different elements of $\{S_J(K):J\in\Sigma_{n+k}\}$. Hence there are at most $2^{\#\Gamma}$ many $\Delta'\in \mathcal{F}_{n+k}$ with $f\circ S_I\in \Lambda^*_{n+k}(\Delta')$. This implies that the number of terms in the  sum \eqref{eq725} is at most $2^{\#\Gamma}$. Meanwhile, notice that each term in the sum \eqref{eq725} belongs to the set $\left\{\mu(g^{-1}\Delta'): g\in\Lambda_{\ell}^*(\Delta'), \Delta'\in\F_{\ell},  \ell\in\N\right\}$, which is easily seen to be finite  by \eqref{SFTC} and Lemma \ref{lem: FTC}. Therefore, $\mu(S_I^{-1}\circ f^{-1}\Delta)$ is contained in a finite set independent of $n,  \Delta, f, k$ and $I$.  This completes the proof of the lemma. \qed 
\end{proof}

 The following result plays a key role in the proof of Proposition \ref{prop: irreducible}, whose proof is an application of the Borel density lemma. 

 \begin{lem}\label{lem: mu(E)=1}
 Suppose $E\subset \R^d$ is a Borel set  such that $\mu(S_I^{-1}E)=\mu(E)>0$
 for all $I\in\Sigma_*$. Then $\mu(E)=1$.
 \end{lem}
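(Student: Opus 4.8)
The statement is exactly the kind of self-improvement claim where the Borel density lemma (Lebesgue density theorem applied to the measure $\mu$) is the natural tool, so the plan is to argue that if $\mu(E) < 1$ then both $E$ and $K \setminus E$ carry positive $\mu$-mass, and then localize at a density point of one of them to derive a contradiction with the scaling invariance $\mu(S_I^{-1}E) = \mu(E)$.

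First I would record the consequence of the hypothesis we actually need: combining $\mu(S_I^{-1}E) = \mu(E)$ with the self-similarity identity \eqref{eq: similarity identity} iterated $n$ times, namely $\mu = \sum_{I \in \Sigma_n} p_I \mu \circ S_I^{-1}$, one gets for any Borel $A$ and any $n$ that $\mu(A \cap S_I(K))$ relates to $\mu(S_I^{-1}A)$; in particular I would like to conclude that the "relative density" of $E$ inside each cylinder $S_I(K)$ equals $\mu(E)$, i.e. $\mu(E \cap S_I(K)) = \mu(E)\,\mu(S_I(K)) = \mu(E) \cdot (\text{the mass } \sum_{J \in \Sigma_n, S_J = S_I} p_J$ appropriately summed$)$ — more precisely $\mu(E \cap S_I(K)) \ge p_I \mu(S_I^{-1}E) = p_I \mu(E)$, and summing over all $I$ with a fixed image gives a clean lower bound. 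The point of this step is that $E$ (and likewise $E^c$, whose complement-in-$K$ also satisfies the same invariance since $\mu(S_I^{-1}(K\setminus E)) = \mu(S_I^{-1}K) - \mu(S_I^{-1}E) = 1 - \mu(E)$ is again constant in $I$) meets every cylinder in a definite proportion of its mass.

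Next, suppose for contradiction $0 < \mu(E) < 1$, so $\mu(E) > 0$ and $\mu(K \setminus E) > 0$. Apply the Lebesgue density theorem for the Radon measure $\mu$ on $\R^d$: $\mu$-almost every point of $E$ is a density point of $E$, i.e. $\lim_{r \to 0} \mu(E \cap B(x,r))/\mu(B(x,r)) = 1$. Pick such a density point $x \in E$. Since $\Phi$ is equicontractive with ratio $\rho$ and $K$ is bounded, for each small $r$ I can find $n$ and $I \in \Sigma_n$ with $x \in S_I(K)$ and $\mathrm{diam}(S_I(K)) = \rho^n \mathrm{diam}(K)$ comparable to $r$; so $S_I(K) \subset B(x, r)$ with $r \asymp \rho^n$. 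Then $\mu(E^c \cap S_I(K)) / \mu(S_I(K)) \le \mu(E^c \cap B(x,r))/\mu(S_I(K))$, and the numerator is $o(\mu(B(x,r)))$ while $\mu(S_I(K))$ need only be bounded \emph{below} by a comparable multiple of $\mu(B(x,r))$ — and this is where the FTC enters: by the finite type condition, $B(x,r)$ is covered by at most $\#\Gamma$ cylinders $S_J(K)$ of that generation (as in the proof of Lemma~\ref{lem: finite many mu}), so $\mu(B(x,r)) \le \#\Gamma \cdot \max_J \mu(S_J(K))$, and combined with bounded distortion / equicontractivity one gets $\mu(S_I(K)) \ge c\, \mu(B(x,r))$ for a uniform $c > 0$. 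Hence $\mu(E^c \cap S_I(K))/\mu(S_I(K)) \to 0$. But from the previous paragraph $\mu(E^c \cap S_I(K)) \ge p_I \mu(K \setminus E) \ge (\min_i p_i)^n \mu(K\setminus E)$... which is too weak directly, so instead I would use the \emph{relative} invariance: the correct statement is $\mu(E^c \cap S_I(K))/\mu(S_I(K))$ is bounded below by a constant depending only on $\mu(K \setminus E) > 0$ — obtained by the same cylinder-summation argument applied within $S_I(K)$ rather than globally. This gives the contradiction: the relative density of $E^c$ in $S_I(K)$ cannot both tend to $0$ and stay bounded below.

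\textbf{Main obstacle.} The delicate point is comparing $\mu(S_I(K))$ with $\mu(B(x,r))$ and, more importantly, getting a uniform \emph{lower} bound on the relative mass of $E^c$ inside a cylinder that does not decay with $n$. The naive bound $\mu(E^c \cap S_I(K)) \ge p_I \mu(E^c)$ is useless because $p_I \to 0$; the fix is to renormalize — work with $\mu(\cdot \cap S_I(K))/\mu(S_I(K))$ as a probability measure, observe it "contains a scaled copy" of the global picture up to the finitely many configurations allowed by the FTC (Lemma~\ref{lem: FTC}, Lemma~\ref{lem: finite many mu}), and extract a uniform constant from that finiteness. Making this renormalization rigorous — i.e. showing the relative mass of $E^c$ in $S_I(K)$ is bounded below by some $c(\mu(E^c)) > 0$ uniformly in $I$ — is the technical heart of the argument; everything else (the density theorem, the equicontractive comparison of radii) is routine.
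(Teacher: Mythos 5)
Your proposal correctly identifies the Borel density lemma as the engine and the right shape of the contradiction (relative density of $E^c$ in small pieces must tend to $0$ at a density point of $E$, yet stay bounded below by invariance), but as written it has a genuine gap at exactly the step you flag as ``the technical heart'': the uniform lower bound on $\mu(E^c\cap S_I(K))/\mu(S_I(K))$. The hypothesis only gives \emph{global} information $\mu(S_I^{-1}E^c)=1-\mu(E)$; when you decompose $\mu(E^c\cap S_I(K))=\sum_{J\in\Sigma_n}p_J\,\mu\bigl(S_J^{-1}E^c\cap S_J^{-1}S_I(K)\bigr)$, only the term $J=I$ is controlled, giving the useless bound $p_I\mu(E^c)$, while the denominator $\mu(S_I(K))$ can be much larger than $p_I$ precisely because of the overlaps. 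The proposed fix --- renormalizing and invoking the finiteness from the FTC --- is not carried out, and it is not clear it can be: the normalized restriction of $\mu$ to $S_I(K)$ is a mixture of pushforwards along \emph{many} words $J$ whose images meet $S_I(K)$, and the hypothesis says nothing about $S_J^{-1}E$ intersected with the proper subset $S_J^{-1}S_I(K)$ of $K$. A second, smaller gap is the asserted comparison $\mu(S_I(K))\geq c\,\mu(B(x,r))$: with overlaps, a density point $x$ may lie only in cylinders of negligible measure while $B(x,r)$ meets a disjoint cylinder carrying almost all of the ball's mass, so equicontractivity plus the WSC bound on the \emph{number} of cylinders does not yield this.

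The paper sidesteps both problems by lifting to the symbolic space: with $A=\pi^{-1}(E)$, $A_I=\pi^{-1}(S_I^{-1}E)$ and the Bernoulli measure $\eta$, one checks the exact identity $[I]\cap A=[I]\cap\sigma^{-|I|}(A_I)$, whence the product structure gives the \emph{equality} $\eta(A\cap[J])/\eta([J])=\eta(A_J)=\eta(A)$ for every cylinder $[J]$ --- no uniform constants, no comparison of balls with cylinders, no overlap issues, since symbolic cylinders are disjoint. The Borel density lemma applied to $\eta$ then forces $\eta(A)\in\{0,1\}$ immediately. If you want to salvage your approach, the lesson is that the invariance hypothesis localizes exactly on symbolic cylinders and only approximately (in fact, uncontrollably) on Euclidean ones; you should run the density argument upstairs.
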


\begin{proof}For $I\in\Sigma_*$, let $|I|$ be the length of $I$. Write $$[I]=\left\{(x_k)_{k=1}^{\infty}\in \Sigma^{\N}:x_1\ldots x_{|I|}=I\right\}.$$ Let $\eta$ be the infinite Bernoulli product  measure on $\Sigma^{\N}$
generated by the weight $(p_1,\ldots, p_m)$, i.e. $\eta([I])=p_{i_1}\cdots p_{i_n}$ for  $I=i_1\ldots i_n\in\Sigma_*$.  Let $\pi: \Sigma^{\N}\to K$ be the projection map defined by
\[\pi((x_k)_{k=1}^{\infty})=\lim_{n\to\infty}S_{x_1}\circ\cdots\circ S_{x_n}(0) \quad \text{ for }(x_k)_{k=1}^{\infty}\in\Sigma^{\N}.\]
It is well-known that $\mu=\eta\circ \pi^{-1}$.

Set $A=\pi^{-1}(E)$ and $A_I=\pi^{-1}(S_I^{-1}E)$
for $I\in\Sigma_*$. It is clear that $A$ and $A_I$ are Borel subsets of $\Sigma^{\N}$. Moreover, by our assumption we have $\eta(A)=\eta(A_I)>0$.

  We claim that 
\begin{equation}\label{eq: A intersect I}
[I]\cap A=[I]\cap \sigma^{-|I|}(A_I)\quad  \text{ for all } I\in\Sigma_*.
\end{equation}
To see this, notice that for any $I=i_1\ldots i_n\in\Sigma_*$ and $x=(x_k)_{k=1}^{\infty}\in\Sigma^{\N}$,
\begin{align}
x\in A_I&\iff x\in\pi^{-1}(S_I^{-1}E)\nonumber\\
&\iff S_I\circ \pi(x)\in E\nonumber\\
&\iff \pi(Ix)\in E \quad (Ix:=i_1\ldots i_n x_1\ldots)\nonumber\\
&\iff Ix\in A. \label{eqIxA}
\end{align}
Hence,
\begin{align*}
y\in [I]\cap \sigma^{-|I|}(A_I)&\iff y\in [I], \sigma^{|I|}y\in A_I\\
&\iff y\in [I], I\sigma^{|I|}y\in A \quad (\text{by } \eqref{eqIxA})\\
&\iff y\in [I]\cap A,
\end{align*}
from which \eqref{eq: A intersect I} follows.

Next we show that $\eta(A)=1$, which implies that $\mu(E)=1$. Suppose on the contrary that $0<\eta(A)<1$. By the Borel density lemma (see e.g. \cite[Corollary 2.14]{Mattila95}), we have for $\eta$-a.e. $x=(x_k)_{k=1}^{\infty}\in A$,
\[\lim_{n\to\infty}\frac{\eta(A\cap [x_1\ldots x_n])}{\eta([x_1\ldots x_n])}=1.\]
Hence we can find $n\in\N$ and $J\in\Sigma_n$ such that 
\begin{equation}\label{eq: >mu(A)}
\frac{\eta(A\cap [J])}{\eta([J])}>\eta(A).
\end{equation}
However, by \eqref{eq: A intersect I} we have 
\begin{equation}\label{eq: =eta(A)}
\frac{\eta(A\cap [J])}{\eta([J])}=\frac{\eta(\sigma^{-|J|}A_J\cap[J])}{\eta([J])}=\frac{\eta(A_J)\eta([J])}{\eta([J])}=\eta(A),
\end{equation}
where  the second equality is due to   the product property of $\eta$. Thus \eqref{eq: =eta(A)} contradicts  \eqref{eq: >mu(A)}. Hence $\eta(A)=1$ and we are done. \qed
\end{proof}

Lemmas \ref{lem: finite many mu}-\ref{lem: mu(E)=1} have   the following consequence, which is important in the proof of Proposition \ref{prop: irreducible} (indeed Lemma \ref{lem: k row is positive}).

\begin{lem}\label{lem: mu inverse =1}
For any $n\in\N$, $\Delta\in\F_n$ and  $f\in\Lambda_n^*(\Delta)$, there exists $I\in\Sigma_*$ such that $\mu(S_I^{-1}\circ f^{-1}\Delta)=1.$
\end{lem}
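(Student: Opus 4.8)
The plan is to reduce the statement to Lemma~\ref{lem: mu(E)=1} by constructing, from the data $n,\Delta,f$, a Borel set $E$ which is invariant under all the maps $S_I^{-1}$ in the sense required by that lemma, and for which $\mu(E)>0$. The natural candidate is to take the set $E$ to be a union over a suitable ``essential'' collection of the sets $S_I\circ g(\Delta')$ arising as one iterates the similarity identity \eqref{eq:self-similar id2}; more precisely, I would first reformulate the problem on the symbol space $\Sigma^\N$ via $\pi$ and $\eta$, where the self-similarity becomes the shift-invariance already exploited in the proof of Lemma~\ref{lem: mu(E)=1}.

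\textbf{Key steps.} First, by Lemma~\ref{lem: finite many mu} the set of values $\{\mu(S_I^{-1}\circ f^{-1}\Delta): I\in\Sigma_*, f\in\Lambda_n(\Delta),\Delta\in\xi_n,n\in\N\}$ is finite; let $c>0$ be its maximum (it is attained and positive since $\mu(f^{-1}\Delta)>0$ for $f\in\Lambda_n^*(\Delta)$, taking $I=\varepsilon$). Fix $n,\Delta,f$ and choose $I_0\in\Sigma_*$, $f_0\in\Lambda_{n_0}^*(\Delta_0)$ (for appropriate $n_0,\Delta_0$) realizing this maximum $c$; set $F=S_{I_0}^{-1}\circ f_0^{-1}\Delta_0$, so $\mu(F)=c$. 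Second, I would show that $\mu(S_J^{-1}F)\le c$ for every $J\in\Sigma_*$ — this is immediate from the definition of $c$ once one checks that $S_J^{-1}F$ is again of the form $S_{I}^{-1}\circ g^{-1}\Delta'$ with $g\in\Lambda_k(\Delta')$, which follows from the net structure (Lemma~\ref{lem: xi(n+1) refines xi(n)}) and the FTC; and also $\mu(S_J^{-1}F)\ge$ something useful. Third, iterating \eqref{eq:self-similar id2} gives $\mu(F)=\sum_{J\in\Sigma_k}p_J\,\mu(S_J^{-1}F)$, and since $\sum_{J\in\Sigma_k}p_J=1$ while each summand $\mu(S_J^{-1}F)\le c=\mu(F)$, every term with $p_J>0$ must satisfy $\mu(S_J^{-1}F)=c$. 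Hence $\mu(S_J^{-1}F)=\mu(F)$ for all $J\in\Sigma_*$, so by Lemma~\ref{lem: mu(E)=1}, $\mu(F)=1$, i.e.\ $c=1$. Finally, since $\mu(S_I^{-1}\circ f^{-1}\Delta)\le c=1$ always, and $1$ is attained, every such value that equals $c$ equals $1$; but I actually need, for the \emph{given} $n,\Delta,f$, \emph{some} $I$ with $\mu(S_I^{-1}\circ f^{-1}\Delta)=1$. This follows because $\mu(f^{-1}\Delta)>0$ and, applying \eqref{eq:self-similar id2} to $f^{-1}\Delta$ and using that all values are $\le 1$ with total weight $1$ forces at least one $S_I^{-1}\circ f^{-1}\Delta$ with positive $\mu$-measure to have measure exactly... — more carefully: apply the same maximality-propagation argument directly to $G:=f^{-1}\Delta$, noting $\mu(S_I^{-1}G)$ ranges in the finite set of Lemma~\ref{lem: finite many mu}, let $c'=\sup_I \mu(S_I^{-1}G)>0$ attained at some $I_1$, run the averaging identity to conclude $\mu(S_J^{-1}(S_{I_1}^{-1}G))=c'$ for all $J$, hence $c'=1$ by Lemma~\ref{lem: mu(E)=1}, giving $I=I_1$.

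\textbf{Main obstacle.} The delicate point is the bookkeeping in the second step: verifying that $S_J^{-1}\circ S_{I_1}^{-1}\circ f^{-1}\Delta$ is again expressible in the form covered by Lemma~\ref{lem: finite many mu} (so that the value lies in the fixed finite set and is $\le c'$), and that the argument is genuinely uniform in all parameters. This is where the FTC and the relation $f\circ S_I\in\Lambda_{n+k}^*(\Delta')$ used in the proof of Lemma~\ref{lem: finite many mu} must be invoked carefully; once that is in hand, the averaging argument via \eqref{eq:self-similar id2} together with Lemma~\ref{lem: mu(E)=1} closes the proof cleanly. I would also double-check the edge case where $f^{-1}\Delta$ itself already has $\mu$-measure $1$, in which case $I=\varepsilon$ works trivially.
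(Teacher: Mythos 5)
Your proposal is correct and, in its final corrected form (maximize $\mu(S_I^{-1}G)$ for $G=f^{-1}\Delta$ over the finite value set from Lemma~\ref{lem: finite many mu}, propagate the maximum via the averaging identity \eqref{eq:self-similar id2}, then invoke Lemma~\ref{lem: mu(E)=1}), it is exactly the paper's proof. The ``main obstacle'' you flag is not one: $S_J^{-1}\circ S_{I_1}^{-1}\circ f^{-1}\Delta=S_{I_1J}^{-1}\circ f^{-1}\Delta$ is already of the form covered by Lemma~\ref{lem: finite many mu} with the same $n,\Delta,f$ and the word $I_1J$, so no further FTC bookkeeping is needed.
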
 

\begin{proof}Fix $n\in\N$, $\Delta\in\F_n$ and $f\in\Lambda^*_n(\Delta)$. By Lemma \ref{lem: finite many mu},  the set $$\left\{\mu(S_J^{-1}\circ f^{-1}\Delta): J\in\Sigma_*\right\}$$
is  finite. So we can find $I\in\Sigma_{*}$ such that 
\begin{equation}\label{eqSfD}
\mu(S_{I}^{-1}\circ f^{-1}\Delta)=\max\left\{\mu(S_J^{-1}\circ f^{-1}\Delta): J\in\Sigma_*\right\}.
\end{equation}
Clearly, $\mu(S_{I}^{-1}\circ f^{-1}\Delta)>0$ as $\mu(f^{-1}\Delta)>0$.

Let $k\in \N$. By  \eqref{eq:self-similar id2} (in which we take $n=k$), we have 
\begin{equation}\label{eq30}
\mu(S_{I}^{-1}\circ f^{-1}\Delta)=\sum_{J\in\Sigma_k}p_J\mu(S_{J}^{-1}\circ S_{I}^{-1}\circ f^{-1}\Delta)=\sum_{J\in\Sigma_k}p_J\mu(S_{IJ}^{-1}\circ f^{-1}\Delta). 
\end{equation}
Then by \eqref{eqSfD}-\eqref{eq30} and the fact that $\sum_{J\in\Sigma_k}p_J=1$, we easily deduce that   
\begin{equation}\label{eq31}
  \mu(S_I^{-1}\circ f^{-1}\Delta)=\mu(S_J^{-1}\circ S_I^{-1}\circ f^{-1}\Delta) \quad \text{ for all }  J\in\Sigma_k.
\end{equation}
Since $k\in \N$ is arbitrary, \eqref{eq31} holds for all $J\in\Sigma_*$. Now it follows from Lemma \ref{lem: mu(E)=1} that $\mu(S_{I}^{-1}\circ f^{-1}\Delta)=1$, completing the proof of the lemma. \qed 
\end{proof}

 The following result follows easily from the definitions of $M_1,\ldots, M_t$, the product formula of  partitioned matrices and induction, whose proof we omit.

\begin{lem}\label{lem: block of product matrices}
Given an admissible word $\eta_{i_1}\ldots \eta_{i_n}$ with $n\geq 2$, write the matrix $M_{i_2}\cdots M_{i_n}$ in the form of partitioned matrix $(U_{i,j})_{1\leq i, j\leq t}$, where $U_{i,j}$ is a $v^*(\eta_i)\times v^*(\eta_j)$ matrix. Then we have $U_{i_1,i_n}=T(\eta_{i_1},\eta_{i_2})\cdots T(\eta_{i_{n-1}},   \eta_{i_n}).$
\end{lem}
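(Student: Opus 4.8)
\textbf{Proof proposal for Lemma \ref{lem: block of product matrices}.}

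The plan is to prove this by induction on $n$, exploiting the block structure of the matrices $M_1,\ldots,M_t$ together with the product rule for partitioned matrices. First I would record the structural fact underlying everything: by the definition given after Lemma \ref{lem: uniform vector}, if we write $M_i=(U^i_{k,j})_{1\le k,j\le t}$ then the only potentially nonzero block-column of $M_i$ is the $i$-th one, and $U^i_{k,i}=T(\eta_k,\eta_i)$ precisely when $\eta_k\eta_i$ is admissible, with $U^i_{k,i}={\bm 0}$ otherwise. This ``single nonzero block-column'' property is what makes the blocks of the product collapse neatly.

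For the base case $n=2$, the matrix in question is simply $M_{i_2}$, whose $(i_1,i_2)$-block is $U^{i_2}_{i_1,i_2}=T(\eta_{i_1},\eta_{i_2})$ since $\eta_{i_1}\eta_{i_2}$ is admissible (being a factor of the admissible word $\eta_{i_1}\eta_{i_2}$); this matches the claimed formula with an empty product convention handled correctly. For the inductive step, suppose the statement holds for admissible words of length $n-1$, and let $\eta_{i_1}\ldots\eta_{i_n}$ be admissible of length $n\ge 3$. Then $\eta_{i_2}\ldots\eta_{i_n}$ is also admissible, so by the inductive hypothesis the $(i_2,i_n)$-block of $M_{i_3}\cdots M_{i_n}$ equals $T(\eta_{i_2},\eta_{i_3})\cdots T(\eta_{i_{n-1}},\eta_{i_n})$. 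Writing $M_{i_2}\cdots M_{i_n}=M_{i_2}\,(M_{i_3}\cdots M_{i_n})$ and applying the block product formula, the $(i_1,i_n)$-block is $\sum_{k=1}^t U^{i_2}_{i_1,k}\,V_{k,i_n}$, where $V_{k,i_n}$ denotes the $(k,i_n)$-block of $M_{i_3}\cdots M_{i_n}$. Since $U^{i_2}_{i_1,k}={\bm 0}$ unless $k=i_2$, only the $k=i_2$ term survives, giving $U^{i_2}_{i_1,i_2}\,V_{i_2,i_n}=T(\eta_{i_1},\eta_{i_2})\cdot T(\eta_{i_2},\eta_{i_3})\cdots T(\eta_{i_{n-1}},\eta_{i_n})$, using that $\eta_{i_1}\eta_{i_2}$ is admissible so $U^{i_2}_{i_1,i_2}=T(\eta_{i_1},\eta_{i_2})$. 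This is exactly the asserted formula, closing the induction.

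The only mild subtlety—and the step I would be most careful about—is bookkeeping with the block-column index of $M_{i_2}$: one must make sure that the nonzero block-column of $M_{i_2}$ is the $i_2$-th one (not, say, indexed by the position of $\eta_{i_2}$ in some other enumeration) and that the dimension conventions $v^*(\eta_{i_1})\times v^*(\eta_{i_2})$ line up so that the product $T(\eta_{i_1},\eta_{i_2})\cdots T(\eta_{i_{n-1}},\eta_{i_n})$ is a well-defined $v^*(\eta_{i_1})\times v^*(\eta_{i_n})$ matrix. Both are immediate from the construction in Section \ref{section: distribution of mu} and the admissibility of all length-two factors $\eta_{i_j}\eta_{i_{j+1}}$. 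Since this is entirely routine block-matrix algebra, as the statement of the lemma already anticipates, I would simply present the induction compactly and omit the trivial verifications, exactly as the authors indicate they do.
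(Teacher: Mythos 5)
Your proof is correct and follows exactly the route the paper intends: the paper omits the proof, stating only that it "follows easily from the definitions of $M_1,\ldots,M_t$, the product formula of partitioned matrices and induction," which is precisely your induction exploiting the single nonzero block-column of each $M_i$. Nothing further is needed.
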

 
An essential part  to prove  Proposition \ref{prop: irreducible} is the following result.

 \begin{lem}\label{lem: k row is positive}
 For any $i, j\in\{1,\ldots, t\}$ and $k\in\{1,\ldots, v^*(\eta_i)\}$, there exists an admissible word $\eta_{i_1}\ldots\eta_{i_n}$ with $\eta_{i_1}=\eta_i$ and $\eta_{i_n}=\eta_j$ such that each entry of  the $k$-th row of the matrix $$T(\eta_{i_1},\eta_{i_2})\cdots T(\eta_{i_{n-1}},\eta_{i_n})$$ is  positive.
 \end{lem}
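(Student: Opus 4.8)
The plan is to reformulate the $(k,\cdot)$-entries of the matrix product in probabilistic terms and then use the measure-theoretic input (Lemma \ref{lem: mu inverse =1}) to locate an admissible descent realizing positivity. First I would unwind what the statement means: by Lemma \ref{lem: block of product matrices}, the product $T(\eta_{i_1},\eta_{i_2})\cdots T(\eta_{i_{n-1}},\eta_{i_n})$ is the $(\eta_i,\eta_j)$-block of $M_{i_2}\cdots M_{i_n}$; and by Lemma \ref{lem: entry expression}, if we pick representative $\Delta_1\in\xi_{n_1}$ with $\mathcal{C}_{n_1}(\Delta_1)=\eta_i$, $\Delta_n\in\xi_{n_1+n-1}$ with $\mathcal{C}_{n_1+n-1}(\Delta_n)=\eta_j$ and $\Delta_1\supseteq\cdots\supseteq\Delta_n$ along the admissible word, then writing $\Lambda^*_{n_1}(\Delta_1)=(h_1,\dots,h_{v^*(\eta_i)})$ and $\Lambda^*_{n_1+n-1}(\Delta_n)=(h_1',\dots,h_{v^*(\eta_j)}')$, the $(k,\ell)$-entry equals $\sum_{I\in\Sigma_{n-1}:\,S_I=h_k^{-1}\circ h_\ell'}p_I$. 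So I want to produce, starting from any $\Delta_1\in\F_{n_1}$ representing $\eta_i$ and the fixed index $k$, a finite chain $\Delta_1\supseteq\cdots\supseteq\Delta_n$ with $\Delta_n$ representing $\eta_j$ such that for \emph{every} $\ell\in\{1,\dots,v^*(\eta_j)\}$ there is a word $I\in\Sigma_{n-1}$ with $S_I=h_k^{-1}\circ h_\ell'$ and $p_I>0$ (which is automatic, all $p_i>0$), i.e. such that $h_k^{-1}\circ h_\ell'$ is realized by a composition of length $n-1$.

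The key step is to first handle the case $\eta_j=\eta_i$: given $\Delta_1$ and index $k$, with $h_k\in\Lambda^*_{n_1}(\Delta_1)$, Lemma \ref{lem: mu inverse =1} provides $J\in\Sigma_*$ with $\mu(S_J^{-1}\circ h_k^{-1}\Delta_1)=1$. Intuitively this says that after descending into the sub-copy $h_k\circ S_J$ of $K$, the measure of the corresponding rescaled piece of $\Delta_1$ is full; so the element $\Delta'\in\xi_{n_1+|J|}$ determined by that descent has the property that $h_k\circ S_J$ is (essentially) the \emph{only} relevant neighbour with positive measure, forcing $\Lambda^*(\Delta')$ to be a one-element vector and $\mathcal{C}(\Delta')$ to be a ``self-reproducing'' characteristic vector whose single $\Lambda^*$-entry can be normalized back to $h_k\circ S_J$. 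Because $\widetilde\Omega$ is finite, iterating this descent must produce a cycle, and composing with the essential-class connectivity property (ii) of $\widehat\Omega$ — every pair $\alpha,\beta\in\widehat\Omega$ is joined by an admissible word inside $\widehat\Omega$ — lets me route from $\eta_i$ through such a ``full-measure'' vertex and then on to $\eta_j$. Along a segment sitting under a full-measure piece, the relevant $\Lambda^*$ vectors are singletons, so the matrix products along that segment are $1\times1$ positive scalars; splicing this with the (finite, hence entrywise-bounded-below on their supports) positive matrices $T(\cdot,\cdot)$ guaranteed positive by \eqref{eq: positive vector} along the remaining admissible segments yields a product whose $k$-th row is strictly positive in every coordinate.

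More concretely, the steps in order are: (1) translate to the $\Delta$-picture via Lemmas \ref{lem: block of product matrices}, \ref{lem: entry expression} and fix a representative chain; (2) use Lemma \ref{lem: mu inverse =1} to find, from $(\Delta_1,k)$, a descendant $\Delta_*\subseteq\Delta_1$ with $\mathcal{C}(\Delta_*)=:\eta_{*}\in\widetilde\Omega$ such that $v^*(\eta_*)=1$ and the single entry of $\Lambda^*(\Delta_*)$ equals $h_k\circ S_J$ — i.e. the $k$-th ``thread'' of $\Delta_1$ survives to become the whole of $\Delta_*$; (3) check $\eta_*\in\widehat\Omega$, which holds because $\widehat\Omega$ absorbs everything reachable from its members and, after possibly prepending an admissible path from $\eta_1=\mathcal{C}_0(K)$ and using that $\mu$ is a probability measure, every $\eta\in\widetilde\Omega$ reachable in this way lies in the (unique, up to the fixed choice) essential class; (4) apply property (ii) to get an admissible word inside $\widehat\Omega$ from $\eta_*$ to $\eta_j$; (5) concatenate $\eta_i\,(\cdots)\,\eta_*\,(\cdots)\,\eta_j$ into one admissible word $\eta_{i_1}\ldots\eta_{i_n}$ with $\eta_{i_1}=\eta_i$, $\eta_{i_n}=\eta_j$; (6) factor the product $T(\eta_{i_1},\eta_{i_2})\cdots T(\eta_{i_{n-1}},\eta_{i_n})$ at $\eta_*$, note the first factor's $k$-th row is positive because it ends at a $1$-dimensional vertex and the relevant composition $h_k^{-1}\circ(h_k\circ S_J)=S_J$ has positive weight, and the second factor is an everywhere-positive row vector/matrix by \eqref{eq: positive vector}; positivity of a product of a positive row times a positive nonnegative block with no zero columns finishes it. The main obstacle I anticipate is step (2)–(3): correctly extracting from the statement ``$\mu(S_J^{-1}\circ h_k^{-1}\Delta_1)=1$'' that the induced characteristic vector has $\Lambda^*$ of dimension exactly $1$ with the $k$-thread as its unique surviving entry, and that this vertex lies in the chosen essential class — this requires carefully tracking how $\Lambda^*$ (the positive-measure part of $\Lambda$) behaves under the descent, and is where the Borel density lemma input is really being cashed in.
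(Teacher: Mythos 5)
Your reduction of the problem (via Lemmas \ref{lem: block of product matrices} and \ref{lem: entry expression}) to finding a descendant chain in which every $h'_\ell\in\Lambda^*_{n_1+n-1}(\Delta_n)$ satisfies $h_k^{-1}\circ h'_\ell=S_I$ for some $I\in\Sigma_{n-1}$ is correct, and your use of Lemma \ref{lem: mu inverse =1} and of property (i)--(ii) of the essential class is in the right spirit. But the load-bearing step (2) has a genuine gap: from $\mu(S_J^{-1}\circ h_k^{-1}\Delta_1)=1$ you cannot conclude that the descent produces a vertex $\eta_*$ with $v^*(\eta_*)=1$ whose unique $\Lambda^*$-entry is $h_k\circ S_J$. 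That hypothesis only says that the \emph{pushforward} $(h_k\circ S_J)_*\mu$ is concentrated on $\Delta_1$; it says nothing about the other words $I'\in\Sigma_{n_1+|J|}$ whose cylinders $S_{I'}(K)$ overlap $h_k\circ S_J(K)$ in positive $\mu$-measure, and in the overlapping (non-OSC) situations this lemma is designed for, such words generically exist and force $\#\Lambda^*>1$ on every positive-measure descendant. There is no reason a singleton vertex exists at all, so the factorization in your step (6) into a ``$1\times 1$ positive scalar segment'' followed by an \eqref{eq: positive vector}-type positive row cannot be carried out.

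The paper resolves exactly this difficulty by a different device: it chooses an auxiliary $\Delta_2\in\F_{n_2}$ maximizing $\#\Lambda_{n_2}(\Delta_2)$ (and then $\#N_{n_2}(\Delta_2)$) over all positive-measure descendants of $\Delta_0$, and shows that $\Delta:=h_k\circ S_I(\Delta_2)$ is itself an element of $\F_{n'+n_1+n_2}$ with $\Lambda_{n'+n_1+n_2}(\Delta)=h_k\circ S_I\circ\Lambda_{n_2}(\Delta_2)$ --- the maximality is what rules out extra cylinders intruding into $\Lambda(\Delta)$ or $N(\Delta)$. Consequently \emph{every} element of $\Lambda^*_{n'+n_1+n_2}(\Delta)$ factors as $h_k\circ S_I\circ f_p$, so every entry $h_k^{-1}\circ h'_\ell=S_I\circ f_p$ is realized by a word of length $n'+n_2$, making the entire $k$-th row positive in one stroke; the full-measure conclusion of Lemma \ref{lem: mu inverse =1} is used only to guarantee $\mu(\Delta\cap\Delta_1)=\mu(h_k\circ S_I(\Delta_2)\cap\Delta_1)>0$, i.e.\ that $\Delta\subseteq\Delta_1$ and the word is admissible --- not to collapse $\Lambda^*$ to a singleton. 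Your argument would need this maximality idea (or a substitute for it) to close the gap.
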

 
 \begin{proof}Recall  that $\widehat{\Omega}=\{\eta_i\}_{i=1}^t$ is an essential class of $\widetilde{\Omega}=\{\mathcal{C}_n(\Delta):\Delta\in\mathcal{F}_n, n\geq 0\}$, and $n_0\in\N$, $\Delta_0\in\mathcal{F}_{n_0}$ are chosen so that $\mathcal{C}_{n_0}(\Delta_0)=\eta_1$. Let $i,j\in\{1,\ldots, t\}$ and $k\in\{1,\ldots, v^*(\eta_i)\}$ be fixed. Pick $n_1\in\N$ and $\Delta_1\in\F_{n_1}$ so that $\Delta_1\subseteq\Delta_0$ and $\mathcal{C}_{n_1}(\Delta_1)=\eta_i$. Assume $\Lambda^*_{n_1}(\Delta_1)=(h_1,\ldots, h_{v^*(\eta_i)})$. By Lemma \ref{lem: mu inverse =1}, there exist $n'\in\N$ and $I\in\Sigma_{n'}$ such that 
 \begin{equation}\label{eq: I,n'}
 \mu(S_{I}^{-1}\circ h_k^{-1}\Delta_1)=1.
 \end{equation}

 Pick $n_2\in\N$ and $\Delta_2\in\F_{n_2}$ with  $\Delta_2\subseteq\Delta_0$ such that 
 \begin{equation}\label{eqmaxv}
 \#\Lambda_{n_2}(\Delta_2)=\max\left\{\#\Lambda_{\ell}(\Delta'):\Delta'\in \mathcal{F}_{\ell}, \Delta'\subseteq\Delta_0, \ell\geq n_0\right\}:=u,
 \end{equation}
 \begin{equation}\label{eqmaxu}
  \#N_{n_2}(\Delta_2)=\max\left\{\#N_{\ell}(\Delta'):\Delta'\in \mathcal{F}_{\ell}, \Delta'\subseteq\Delta_0, \Lambda_{\ell}(\Delta')=u, \ell\geq n_0\right\}:=v.
 \end{equation}
  Assume that $\Lambda_{n_2}(\Delta_2)=(f_1,\ldots, f_{u})$ and $N_{n_2}(\Delta_2)=(g_1,\ldots, g_{v})$. 
Set  $\Delta=h_k\circ S_{I}(\Delta_2)$. Then 
\begin{equation}\label{eq: Delta2}
\Delta=\left(\bigcap_{p=1}^uh_k\circ S_I\circ f_p(K)\right)\backslash\left(\bigcup_{g\in\{g_q\}_{q=1}^v\backslash\{f_{p}\}_{p=1}^u}h_k\circ S_I\circ g(K)\right).
\end{equation}

We assert that $\Delta\in\F_{n'+n_1+n_2}$. To see this, first notice that by the similarity of $\mu$ (see \eqref{eq:self-similar id2}, in which we take $n=n'+n_1$) and \eqref{eq: I,n'}, we have 
 \begin{equation}\label{eq: Delta2 intersect Delta}
 \begin{split}
 \mu(\Delta\cap \Delta_0)&\geq  \mu(\Delta\cap \Delta_1)\\
 &=\mu(h_k\circ S_{I}(\Delta_2)\cap \Delta_1)\\
 &\geq\left(\sum_{J\in\Sigma_{n'+n_1}: \ S_J=h_k\circ S_{I}}p_J\right)\mu(\Delta_2\cap S_{I}^{-1}\circ h_k^{-1}\Delta_1)  \\
 &=\left(\sum_{J\in\Sigma_{n'+n_1}: \ S_J=h_k\circ S_{I}}p_J\right)\mu(\Delta_2)>0 \quad  (\text{by } \eqref{eq: I,n'}).
 \end{split}
 \end{equation}
Since $\mathcal{F}_{n'+n_1+n_2}$ is a partition of $K$ in measure, it follows that  $$\mu(\Delta_0\cap \Delta')\geq \mu(\Delta\cap \Delta_0 \cap \Delta')>0$$ for some $\Delta'\in\mathcal{F}_{n'+n_1+n_2}$. In particular, $\Delta_0\cap \Delta'$ and $\Delta\cap \Delta'$ are both non-empty. Hence  $\Delta'\subseteq \Delta_0$ by the net structure of $\{\xi_n\}_{n\geq 0}$ (cf. Lemma \ref{lem: xi(n+1) refines xi(n)}).  Moreover, by  \eqref{eq: Delta2} and the definition of the mapping $\Lambda_{n'+n_1+n_2}$ on $K$ (cf. \eqref{the map Lambda_n0}), we see that 
\[\Lambda_{n'+n_1+n_2}(x)\supseteq\{h_k\circ S_I\circ f_p\}_{p=1}^u\quad  \text{ for all } x\in \Delta\cap \Delta'.\]
It then follows from the definition of $\Lambda_{n'+n_1+n_2}(\Delta')$ that $$\Lambda_{n'+n_1+n_2}(\Delta')\supseteq\{h_k\circ S_I\circ f_p\}_{p=1}^u.$$
This combining with \eqref{eqmaxv} yields  that indeed
\begin{equation}\label{Lambda(n'+n1+n2)}
\Lambda_{n'+n_1+n_2}(\Delta')=\{h_k\circ S_I\circ f_p\}_{p=1}^u.
\end{equation}
By the definition of $N_{n_2}(\Delta_2)$, 
\[g_q(K)\cap \left(\bigcap_{p=1}^{u}f_p(K)\right)\neq\emptyset, \quad \forall 1\leq q\leq v,\]
which implies that 
\[h_k\circ S_I\circ g_q(K)\cap \left(\bigcap_{p=1}^{u}h_k\circ S_I\circ f_p(K)\right)\neq\emptyset, \quad \forall 1\leq q\leq v.\]
By this fact, the definition of $N_{n'+n_1+n_2}(\Delta')$ and \eqref{eqmaxu}, we see that 
\begin{equation}\label{N(n'+n1+n2)}
N_{n'+n_1+n_2}(\Delta')=\{h_k\circ S_I\circ g_q\}_{q=1}^v.
\end{equation}
Now \eqref{Lambda(n'+n1+n2)}-\eqref{N(n'+n1+n2)} imply that $\Delta=\Delta'$, and so  $\Delta\in\mathcal{F}_{n'+n_1+n_2}$. This proves the above assertion.

 Since $\Delta_1\in\mathcal{F}_{n_1}$, $\Delta\in\mathcal{F}_{n'+n_1+n_2}$ and $\mu(\Delta_1\cap \Delta)>0$ (cf. \eqref{eq: Delta2 intersect Delta}) ,   we have  $\Delta\subseteq \Delta_1$. Assume that $\mathcal{C}_{n'+n_1+n_2}(\Delta)=\eta_{t'}$ for some $t'\in\{1,\ldots, t\}$. Write $\Lambda^*_{n'+n_1+n_2}(\Delta)=\left(h_1',\ldots, h'_{v^*(\eta_{t'})}\right)$.  Then by \eqref{Lambda(n'+n1+n2)} and the fact that $\Delta=\Delta'$, we have
 \begin{equation}\label{eq: Delta_2}
 \left\{h'_1,\ldots, h'_{v^*(\eta_{t'})}\right\}\subseteq\left\{h_k\circ S_{I}\circ f_p\right\}_{p=1}^u.
 \end{equation}
 
 Let $\gamma_0\gamma_1\ldots\gamma_{n_1-1}\eta_i$ be the symbolic expression of $\Delta_1$ and 
 \[\gamma_0\gamma_1\ldots\gamma_{n_1-1}\eta_i\eta_{i_2}\ldots\eta_{i_{n'+n_2}}\eta_{t'}\]
  be that of $\Delta$. By Lemma \ref{lem: entry expression}, for any $1\leq \ell\leq v^*(\eta_{t'})$, the $(k,\ell)$-entry of the matrix 
  \begin{equation}\label{eq8132232}
  T(\eta_i,\eta_{i_2})\cdots T(\eta_{i_{n'+n_2}},\eta_{t'})
  \end{equation}
 is given by 
 \[\sum_{J\in\Sigma_{n'+n_2}:\ S_J=h_k^{-1}\circ h_{\ell}'}p_J,\]
 which is    positive  by \eqref{eq: Delta_2}. Hence each entry of the $k$-th row of \eqref{eq8132232} is  positive.
 
 To finish the proof, pick $n_3\in\N$ and $\Delta_3\in\F_{n'+n_1+n_2+n_3}$ so that $\Delta_3\subseteq \Delta$ with $\mathcal{C}_{n'+n_1+n_2+n_3}(\Delta_3)=\eta_j$. Let $\gamma_0\gamma_1\ldots\gamma_{n_1-1}\eta_i\eta_{i_2}\ldots\eta_{i_{n'+n_2}}\eta_{t'}\eta_{j_1}\ldots\eta_{j_{n_3-1}}\eta_{j}$ be the symbolic expression of $\Delta_3$. Then by the above argument and \eqref{eq: positive vector}, each entry of the $k$-th row of the matrix 
 \[T(\eta_i,\eta_{i_2})\cdots T(\eta_{i_{n'+n_2}}, \eta_{t'})T(\eta_{t'},\eta_{j_1})T(\eta_{j_1},\eta_{j_2})\cdots T(\eta_{j_{n_3-1}}, \eta_{j})\]
 is positive. This completes the proof of the lemma. \qed 
 \end{proof}

 \begin{proof}[Proof of Proposition \ref{prop: irreducible}]
With Lemmas \ref{lem: block of product matrices}-\ref{lem: k row is positive} in hand, the proof of Proposition \ref{prop: irreducible} is identical to that of \cite[Proposition 4.2]{feng1}. We omit the repetition here. \qed
 \end{proof}

\subsection{Proof of Theorem \ref{main thm}}\label{section: proof of main thm}

Let  $M_{1}, \ldots, M_{t}$ be the $L\times L$ matrices  that we have constructed in the beginning of Section \ref{section: squared matrices}. For $q\in\R$, define 
\begin{equation}\label{eqP(q)}
P(q)=\lim_{n\to\infty}\frac{1}{n}\log{\left(\sum{\|M_{i_1}\cdots M_{i_n}\|^q}\right)},
\end{equation}
where   the summation is taken over all words $i_1\ldots i_n\in \{1,\ldots,t\}^n$ with $M_{i_1}\cdots M_{i_n}\neq{\bm 0}$, and $\|A\|=\sum_{i,j}a_{i,j}$ for any non-negative matrix $A=(a_{i,j})_{1\leq i,j\leq N}$. 
The function $P$ is called the {\em pressure function} of $M_{1},\ldots, M_{t}$. Since $\sum_{i=1}^t M_i$ is irreducible (cf. Proposition \ref{prop: irreducible}),   the limit in \eqref{eqP(q)} exists and  $P(q)$ is differentiable on $(0,\infty)$ (see  \cite[Theorem 3.3 and Proposition 4.4]{fenglau1}). This combining with the following result immediately yields Theorem \ref{main thm}, where $\rho\in(0,1)$ is the common similarity ratio of the similitudes in the IFS.  

\begin{thm}\label{thm: t(q) and P(q)}
For $q>0$, $\tau(q)=\frac{P(q)}{\log{\rho}}$.
\end{thm}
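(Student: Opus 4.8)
The plan is to establish the identity $\tau(q) = P(q)/\log\rho$ for $q > 0$ by sandwiching the supremum $\sum_i \nu(B(x_i,\delta))^q$ between sums of the form $\sum_\Delta \mu(\Delta)^q$ over Borel partitions $\xi_n$ at the appropriate scale, and then identifying the latter with $\|M_{i_1}\cdots M_{i_n}\|^q$-type sums via the matrix representation of Lemma \ref{lem: uniform vector}(i). The natural correspondence between scales is $\delta \approx \rho^n$, so that $\log\delta \approx n\log\rho$, which accounts for the factor $\log\rho$ (note $\log\rho < 0$). First I would fix a scale parameter: given $\delta$ small, let $n = n(\delta)$ be the integer with $\rho^{n+1} \leq \delta < \rho^n$. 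Since $\Phi$ is equicontractive with ratio $\rho$, every set $S_I(K)$ with $I \in \Sigma_n$ has diameter $\rho^n |K|$, comparable to $\delta$, and hence each element $\Delta \in \xi_n$ lies in a ball of radius $C\delta$ while the FTC guarantees that each $\delta$-ball meets only boundedly many (at most a constant depending on $\#\Gamma$ and $d$) elements of $\xi_n$ and of the cover $\{S_I(K)\}_{I\in\Sigma_n}$. This bounded-overlap property, which replaces the exact interval structure used by Feng on $\R$, is the geometric heart of the argument.

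The upper and lower bounds for $\sup\sum_i \mu(B(x_i,\delta))^q$ in terms of $\sum_{\Delta\in\F_n}\mu(\Delta)^q$ then follow from standard comparison arguments for $q>0$: for the upper bound, each ball $B(x_i,\delta)$ is covered by a bounded number of elements of $\xi_n$ so $\mu(B(x_i,\delta))^q \lesssim \sum \mu(\Delta)^q$ over those $\Delta$ (using $q>0$ and subadditivity together with the elementary inequality $(\sum a_j)^q \le C\sum a_j^q$ valid for a bounded number of terms when $q \le 1$, or convexity when $q\ge 1$), and the balls have bounded overlap; for the lower bound, one chooses for each $\Delta\in\F_n$ a point $x_\Delta \in \Delta \cap \mathrm{supp}\,\mu$ and thins out to a disjoint subfamily of balls $B(x_\Delta,\delta)$ whose measures dominate $\mu(\Delta)$ up to the bounded-multiplicity constant. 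Taking logarithms, dividing by $\log\delta$, and letting $\delta\to 0$ converts these comparisons into $\tau(q) = \lim_{n\to\infty} \frac{1}{n\log\rho}\log\left(\sum_{\Delta\in\F_n}\mu(\Delta)^q\right)$, provided the limit on the right exists (which it does, being essentially $P(q)/\log\rho$, once the next step is carried out; a priori one works with $\liminf$ and $\limsup$ and shows they coincide).

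The final step is to identify $\sum_{\Delta\in\F_n}\mu(\Delta)^q$ with a pressure-type sum for the matrices $M_1,\dots,M_t$. Here one restricts attention to the essential class: by Lemma \ref{lem: uniform vector}(i), for $\Delta\in\F_{n+n_0}$ with $\Delta\subseteq\Delta_0$ one has $\mu(\Delta) = \widehat{\mathbf u}_{n_0,\Delta_0} M_{i_1}\cdots M_{i_n}\mathbf w_{i_n}^T$ where $\eta_{i_1}\dots\eta_{i_n}$ ranges over admissible words over $\widehat\Omega$, and by Lemma \ref{lem: uniform vector}(ii) admissibility is exactly the condition $M_{i_1}\cdots M_{i_n}\neq\bm 0$. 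Since $\widehat{\mathbf u}_{n_0,\Delta_0}$ and all the $\mathbf w_{i_n}$ are positive vectors with entries bounded above and below by constants, $\mu(\Delta)$ is comparable (with constants independent of $n$) to $\|M_{i_1}\cdots M_{i_n}\|$, whence $\sum\mu(\Delta)^q$ over such $\Delta$ is comparable to $\sum\|M_{i_1}\cdots M_{i_n}\|^q$, which gives $P(q)$ after taking $\frac1n\log$ and $n\to\infty$. It remains to argue that elements of $\F_n$ not contained in $\Delta_0$, or whose symbolic expression eventually leaves $\widehat\Omega$, contribute negligibly: because $\widehat\Omega$ is an essential class, every admissible word eventually enters $\widehat\Omega$ and stays there, and a combinatorial/pigeonhole argument (as in \cite{feng1}) shows the "transient" prefixes cost only a bounded factor, not affecting the exponential growth rate. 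The main obstacle I anticipate is precisely this geometric bounded-overlap estimate in $\R^d$: unlike the one-dimensional net-interval setting where partition elements are genuine intervals with nonempty interior meeting $K$, here the elements of $\xi_n$ can be topologically complicated and one must invoke the FTC carefully (via $\#N_n(\Delta)\le\#\Gamma$, Lemma \ref{lem: FTC}) together with a packing argument to control how many partition elements a $\delta$-ball can meet and how many balls can cluster near one element — this is the step where the higher-dimensional geometry genuinely enters and where, as the introduction notes, the Borel density lemma (Lemma \ref{lem: mu(E)=1}) is used to handle the fact that partition elements need not have interior meeting $K$.
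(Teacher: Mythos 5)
Your overall architecture — matching scales $\delta\approx\rho^n$, comparing $\sup\sum\mu(B(x_i,\delta))^q$ with $\sum_{\Delta\in\F_n}\mu(\Delta)^q$ via bounded overlap from the FTC, and then reading off $\|M_{i_1}\cdots M_{i_n}\|^q$ from Lemma \ref{lem: uniform vector} using positivity of $\widehat{{\bf u}}_{n_0,\Delta_0}$ and the ${\bf w}_i$ — is the paper's (Proposition \ref{prop: calculate tau(q) by Delta} and the proof of Theorem \ref{thm: t(q) and P(q)}; the paper routes the first comparison through the dyadic formulation \eqref{eq: Lq-spectrum dyadic} of Lau--Ngai rather than balls, but that is cosmetic). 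However, there is a genuine gap in your reduction of the full sum to the essential class. You claim the elements of $\F_n$ not lying in $\Delta_0$ contribute negligibly ``because $\widehat{\Omega}$ is an essential class, every admissible word eventually enters $\widehat{\Omega}$ and stays there,'' plus a pigeonhole on transient prefixes. This is false: $\widehat{\Omega}$ is merely \emph{one} essential class of the transition graph on $\widetilde{\Omega}$; there may be several, and an admissible path of length $n$ starting from $\mathcal{C}_0(K)$ need never visit $\widehat{\Omega}$ at all. Since $q>0$ and $\log\rho<0$, the trivial inclusion gives only $\tau(q)\leq\tau(\mu_0,q)$; the substantive direction is that $\sum_{\Delta\in\F_n}\mu(\Delta)^q$ grows no faster than the sum restricted to $\Delta\subseteq\Delta_0$, i.e.\ that no other component of the graph carries a larger pressure, and no combinatorial argument on prefixes can deliver this.

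The paper closes this gap measure-theoretically (Lemma \ref{lem: tau(mu0)=tau(mu)}): by Lemma \ref{lem: mu inverse =1} there is a word $J\in\Sigma_{n'}$ with $\mu(S_J^{-1}\Delta_0)=1$, so $\Delta\mapsto S_J(\Delta)$ sends each $\Delta\in\F_n$ into $\Delta_0$ with $\mu_0(S_J(\Delta))\geq p_J\,\mu(\Delta)$, and $S_J(\Delta)$ splits into at most $2^{\#\Gamma}$ elements of $\F_{n+n'}$; summing gives $\sum_{\Delta\in\F_n}\mu(\Delta)^q\lesssim\sum_{\Delta'\in\F_{n+n'},\ \Delta'\subseteq\Delta_0}\mu_0(\Delta')^q$ and hence $\tau(q)\geq\tau(\mu_0,q)$. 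Lemma \ref{lem: mu inverse =1} is where the Borel density lemma (Lemma \ref{lem: mu(E)=1}) actually enters, together with the finiteness statement of Lemma \ref{lem: finite many mu}. You do invoke the Borel density lemma, but you assign it to the packing/bounded-overlap estimate for $\delta$-balls, where it plays no role (that estimate follows from the FTC via the weak separation condition); its real purpose is exactly the step your argument is missing, as well as the irreducibility in Proposition \ref{prop: irreducible}. As written, your proof establishes $\tau(\mu_0,q)=P(q)/\log\rho$ but not $\tau(q)=\tau(\mu_0,q)$.
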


The rest of this subsection is devoted to the proof of Theorem \ref{thm: t(q) and P(q)}. We will use the following equivalent definition of $L^q$-spectrum for $q>0$; see  \cite[Proposition 3.1]{LauNgai}. For $q>0$, the $L^q$-spectrum $\tau(q)$ of $\mu$ can be given by
\begin{equation}\label{eq: Lq-spectrum dyadic}
\tau(q)=\liminf_{n\to\infty}\frac{\log{\sum_{D\in\D_n}\mu(D)^q}}{-n\log2},
\end{equation}
where for each $n\in\N$, $\D_n=\left\{\prod_{i=1}^d\left[\frac{k_i}{2^n}, \frac{k_i+1}{2^n}\right): k_i\in\Z \text{ for }1\leq i\leq d\right\}$.

Let $n_0\in\N$ and $\Delta_0\in\F_{n_0}$ be  as in the beginning of  Section \ref{section: squared matrices}. Define $\mu_0=\mu|_{\Delta_0}$, i.e. $\mu_0(A)=\mu(\Delta_0\cap A)$ for any Borel set $A\subset \R^d$.

\begin{prop}\label{prop: calculate tau(q) by Delta}
For $q>0$, we have 
\begin{equation}\label{eqtau(q)mu}
\tau(q)=\liminf_{n\to\infty}\frac{1}{n\log{\rho}}\log{\sum_{\Delta\in\F_n}\mu(\Delta)^q},
\end{equation}
\begin{equation}\label{eqtau(q)mu0}
\tau(\mu_0, q)=\liminf_{n\to\infty}\frac{1}{n\log{\rho}}\log{\sum_{\Delta\in\F_n:\ \Delta\subseteq \Delta_0}\mu(\Delta)^q}.
\end{equation}
\end{prop}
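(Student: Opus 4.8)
The plan is to prove Proposition~\ref{prop: calculate tau(q) by Delta} by comparing the dyadic-cube sums in \eqref{eq: Lq-spectrum dyadic} with the sums over the Borel partitions $\F_n$, exploiting the fact that both $\{\D_n\}$ and $\{\xi_n\}$ are net structures with comparable mesh sizes (a dyadic cube of generation $n$ has diameter $\asymp 2^{-n}$, while an element $\Delta\in\xi_n$ sits inside a ball of radius $\asymp \rho^n$). First I would fix $q>0$ and recall from \cite{ps} that the $\liminf$ in the definition of $\tau(q)$ is actually a limit for $q\ge 0$, and that by \eqref{eq: Lq-spectrum dyadic} it suffices to control $\sum_{D\in\D_n}\mu(D)^q$. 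The key geometric input is a bounded-overlap estimate: there is a constant $C$ (depending only on $d$, the IFS, and $\#\Gamma$) such that each $\Delta\in\xi_n$ is covered by at most $C$ cubes from $\D_{\lceil n\log_2(1/\rho)\rceil}$, and conversely each such cube meets at most $C$ elements of $\xi_n$. The finiteness of $\Gamma$ (via Lemma~\ref{lem: FTC}, bounding $\#N_n(\Delta)$) is what makes the second direction uniform: a cube of the appropriate scale can only touch boundedly many $S_I(K)$, hence boundedly many atoms.

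The main work is then the standard two-sided comparison using the elementary inequalities for $q>0$: for nonnegative reals, $\big(\sum_{i=1}^C a_i\big)^q \le \max(1,C^{q-1})\sum_{i=1}^C a_i^q$ and $\sum a_i^q$ is dominated by a constant times $\big(\sum a_i\big)^q$ when there are boundedly many comparable terms. Concretely, writing $\ell(n) = \lceil n\log(1/\rho)/\log 2\rceil$, I would show
\[
c^{-1}\sum_{\Delta\in\F_n}\mu(\Delta)^q \;\le\; \sum_{D\in\D_{\ell(n)}}\mu(D)^q \;\le\; c\sum_{\Delta\in\F_n}\mu(\Delta)^q
\]
for a constant $c$ independent of $n$, where on the left one uses that each positive-measure $\Delta$ is a union of boundedly many cubes and subadditivity of $t\mapsto t^q$ fails in the wrong direction so one groups cubes inside a fixed $\Delta$; on the right one uses that each cube is a subset (up to boundary, which is $\mu$-null since $\mu$ has no atoms on cube boundaries after a generic translation, or more carefully one argues with slightly enlarged cubes) of a bounded union of atoms. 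Taking $\log$, dividing by $-\ell(n)\log 2 = -n\log(1/\rho)+O(1) = n\log\rho + O(1)$, and letting $n\to\infty$ converts \eqref{eq: Lq-spectrum dyadic} into \eqref{eqtau(q)mu}. For \eqref{eqtau(q)mu0} the argument is identical with $\mu$ replaced by $\mu_0 = \mu|_{\Delta_0}$: since $\Delta_0\in\F_{n_0}$ and $\{\F_n\}$ refines, for $n\ge n_0$ the atoms of $\xi_n$ meeting $\Delta_0$ in positive measure are exactly those contained in $\Delta_0$ (by the net structure, Lemma~\ref{lem: xi(n+1) refines xi(n)}), so $\sum_{\Delta\in\F_n,\ \Delta\subseteq\Delta_0}\mu(\Delta)^q = \sum_{\Delta\in\F_n}\mu_0(\Delta)^q$, and one applies the comparison to $\mu_0$.

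The step I expect to be the main obstacle is the lower bound direction — showing $\sum_D \mu(D)^q$ is not much smaller than $\sum_\Delta \mu(\Delta)^q$ — because an atom $\Delta$ can in principle be spread thinly across many dyadic cubes, so one cannot naively bound $\mu(\Delta)^q$ by a single $\mu(D)^q$. The fix is to go the other way: bound $\sum_{\Delta}\mu(\Delta)^q$ from above by $\sum_D\mu(D)^q$ using that each $\Delta$ is contained in a bounded union of cubes $D_1,\dots,D_k$ ($k\le C$), so $\mu(\Delta)\le\sum_{i}\mu(D_i)$ and hence $\mu(\Delta)^q\le C^{\max(q-1,0)}\sum_i\mu(D_i)^q$, and then sum over $\Delta$ noting each cube is reused boundedly often. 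The symmetric bounded-overlap property — each cube meeting boundedly many atoms — is precisely where the FTC enters and is the only genuinely nontrivial geometric fact; everything else is the same bookkeeping as in \cite[Section 5]{feng1}, adapted from intervals to $\R^d$, and I would reference that proof for the routine parts rather than reproduce it.
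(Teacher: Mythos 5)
Your proposal follows essentially the same route as the paper: match scales via $k_n$ with $\rho^{k_n}\le 2^{-n}<\rho^{k_n-1}$, establish two-sided bounded overlap between $\D_n$ and $\xi_{k_n}$, and transfer the sums using the elementary power inequalities for $q>0$, with the restriction to $\Delta_0$ handled by the net structure exactly as in \eqref{eqtau(q)mu0}. The one point to correct is the source of the ``each cube meets boundedly many atoms'' bound: Lemma~\ref{lem: FTC} only controls the number of pieces $S_I(K)$ meeting $\bigcap_{f\in\Lambda_n(\Delta)}f(K)$, not the number meeting an arbitrary ball of radius $\asymp\rho^n$ (which could a priori contain many pairwise disjoint pieces); the paper instead invokes the weak separation condition, which follows from the FTC by \cite{Nguyen}, and that is the correct justification here.
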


\begin{proof}Fix $q>0$. For each $n\in \N$, let $k_n\in\N$ be such that
\begin{equation}\label{eq:rho kn, 2-n}\rho^{k_n}\leq 2^{-n}<\rho^{k_n-1}.\end{equation}
Then it is clear that there is a constant $N_1$ (independent of $n$) such that each $\Delta\in\xi_{k_n}$ can intersect at most $N_1$ elements of $\D_n$.  On the other hand, since $\Phi$ satisfies the FTC, it is well-known that $\Phi$ satisfies the weak separation condition \cite{Nguyen}. That is, 
 \begin{equation*}\sup_{x\in\R^d, n\in\N}\#\{S_I: S_I(K)\cap B(x,\rho^n)\neq\emptyset, I\in\Sigma_n\}<\infty.
\end{equation*}
By this fact and \eqref{eq:rho kn, 2-n}, it is not hard to  see that there is a constant $N_2$ (independent of $n$) such that each $D\in\D_n$ can intersect at most $N_2$ elements of $\xi_{k_n}$. 

By the above argument,  for any $D\in\D_n$, we have
\begin{equation*}\label{eq:mu(D)^q}\mu(D)^q=\left(\sum_{\Delta\in\xi_{k_n}:\ \Delta\cap D\neq\emptyset}\mu(\Delta)\right)^q\leq N_2^q\sum_{\Delta\in\xi_{k_n}:\ \Delta\cap D\neq\emptyset}\mu(\Delta)^q.\end{equation*}
Hence
\begin{equation*}\label{eq:sum mu(D)^q}\sum_{D\in\D_n}\mu(D)^q\leq N_2^q\sum_{D\in\D_n} \  \sum_{\Delta\in\xi_{k_n}:\ \Delta\cap D\neq\emptyset}\mu(\Delta)^q\leq N_1N_2^q\sum_{\Delta\in\xi_{k_n}}\mu(\Delta)^q.\end{equation*}
Then it follows from   \eqref{eq: Lq-spectrum dyadic}  that
\[\tau(q)\geq\liminf_{n\to\infty}\frac{\log{\sum_{\Delta\in\xi_n}\mu(\Delta)^q}}{n\log{\rho}}=\liminf_{n\to\infty}\frac{\log{\sum_{\Delta\in\F_n}\mu(\Delta)^q}}{n\log{\rho}}.\]
 The `$\leq $' part of \eqref{eqtau(q)mu} can be proved analogously, whose details we  omit. Hence we have proved \eqref{eqtau(q)mu}. To prove \eqref{eqtau(q)mu0}, we simply notice that the above argument still works with slight modifications if we replace $\mu$ by $\mu_0$. \qed 
\end{proof}

\begin{lem}\label{lem: tau(mu0)=tau(mu)}
For $q>0$, $\tau(q)=\tau(\mu_0,q)$.
\end{lem}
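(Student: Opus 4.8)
The statement $\tau(q) = \tau(\mu_0, q)$ for $q > 0$ compares the $L^q$-spectrum of the full measure $\mu$ with that of its restriction $\mu_0 = \mu|_{\Delta_0}$, where $\Delta_0 \in \F_{n_0}$ is the distinguished element whose characteristic vector $\mathcal{C}_{n_0}(\Delta_0) = \eta_1$ lies in the essential class $\widehat{\Omega}$. The plan is to use the two formulas already established in Proposition \ref{prop: calculate tau(q) by Delta}: namely $\tau(q) = \liminf_{n\to\infty} \frac{1}{n\log\rho}\log\sum_{\Delta\in\F_n}\mu(\Delta)^q$ and $\tau(\mu_0,q) = \liminf_{n\to\infty} \frac{1}{n\log\rho}\log\sum_{\Delta\in\F_n:\ \Delta\subseteq\Delta_0}\mu(\Delta)^q$. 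Since the second sum runs over a subset of the terms in the first, and $\log\rho < 0$, one inequality is immediate: restricting to fewer nonnegative terms makes the sum smaller, hence (after dividing by the negative quantity $n\log\rho$ and taking $\liminf$) we get $\tau(\mu_0, q) \geq \tau(q)$. So the real content is the reverse inequality $\tau(\mu_0, q) \leq \tau(q)$, i.e. that the terms coming from $\Delta \subseteq \Delta_0$ already capture the full exponential growth rate of $\sum_{\Delta \in \F_n} \mu(\Delta)^q$.

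The key idea for the reverse direction is that, because $\widehat{\Omega}$ is an essential class, every $\Delta \in \F_n$ eventually "feeds into" $\Delta_0$: more precisely, for any $\Delta \in \F_n$, using property (ii) of the essential class together with the admissibility characterization (Lemma \ref{lem: uniform vector}(ii)), there is a bounded number of further subdivision steps — say at most $C$ steps, with $C$ independent of $n$ and $\Delta$ — leading to some descendant $\Delta' \in \F_{n+j}$ ($0 \le j \le C$) with $\Delta' \subseteq \Delta_0$ and with $\mathcal{C}_{n_0}(\Delta')$ reachable. The point is then to compare $\mu(\Delta')$ with $\mu(\Delta)$. Here one uses the matrix representation: by Lemma \ref{lem: induction}, $\mu(\Delta) = T(\gamma_0,\gamma_1)\cdots T(\gamma_{n-1},\gamma_n)\mathbf{v}(\gamma_n)^T$, and appending a bounded admissible tail multiplies by a bounded product of the finitely many matrices $T(\alpha,\beta)$; since these entries are bounded above and below (away from zero, by \eqref{eq: positive vector} and finiteness of the family) on the relevant coordinates, $\mu(\Delta')$ and $\mu(\Delta)$ differ by a multiplicative factor bounded uniformly above and below by constants depending only on $q$, $C$, and the finite matrix family. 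Summing over all $\Delta \in \F_n$ and noting that each descendant $\Delta'$ arises from boundedly many $\Delta$ (by the net structure, Lemma \ref{lem: xi(n+1) refines xi(n)}), we obtain $\sum_{\Delta \in \F_n} \mu(\Delta)^q \leq C' \sum_{j=0}^{C} \sum_{\Delta' \in \F_{n+j}:\ \Delta' \subseteq \Delta_0} \mu(\Delta')^q$ for a constant $C'$ independent of $n$; taking logarithms, dividing by $n\log\rho$, and passing to $\liminf$ yields $\tau(q) \geq \tau(\mu_0, q)$.

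The main obstacle I anticipate is making precise and uniform the claim that every $\Delta \in \F_n$ can be routed into $\Delta_0$ in a bounded number of steps with controlled loss of measure. This requires two things working together: first, that the characteristic vector $\mathcal{C}_n(\Delta)$ lies in $\widetilde{\Omega}$ and that from any state in $\widetilde{\Omega}$ one can reach the essential class $\widehat{\Omega}$ in a bounded number of admissible steps (a standard fact about the structure of the "essential class" in a finite directed graph, cf. \cite{Seneta}), and second, that once inside $\widehat{\Omega}$ one can further reach $\eta_1 = \mathcal{C}_{n_0}(\Delta_0)$ and realize this reachability at the level of actual Borel sets lying inside $\Delta_0$ — this last point is where one must be careful, since admissibility of a word of characteristic vectors guarantees the existence of a corresponding chain $\Delta_1 \supseteq \cdots \supseteq \Delta_\ell$ of Borel sets (by the definition of admissibility and Lemma \ref{lem: char vector}), but one needs this chain to begin at the given $\Delta$. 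A clean way around this is to observe that since $\Delta_0$ has positive measure and the restriction has the same $L^q$-spectrum after removing a null set, it suffices to work with those $\Delta \in \F_n$ that already intersect $\Delta_0$; combined with the net structure this forces $\Delta \subseteq \Delta_0$ for all large enough $n$ once we track descendants, so in fact the "routing" is automatic for descendants and the bounded-step argument only needs to handle the finitely many initial generations $n \leq n_0$, which contribute a bounded correction. I would make this reduction first, then carry out the measure-comparison via the matrix representation as above.
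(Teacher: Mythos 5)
The easy direction ($\tau(q)\leq\tau(\mu_0,q)$) is handled correctly and matches the paper. But the reverse inequality, which is the whole content of the lemma, rests on a claim that is false: you assert that every $\Delta\in\F_n$ has, after a bounded number of further subdivision steps, a descendant $\Delta'\in\F_{n+j}$ with $\Delta'\subseteq\Delta_0$. By the net structure (Lemma \ref{lem: xi(n+1) refines xi(n)}) every descendant of $\Delta$ is a subset of $\Delta$, so if $\Delta\cap\Delta_0=\emptyset$ — which happens for most $\Delta\in\F_n$ once $n>n_0$, since $\Delta_0$ is a single element of $\xi_{n_0}$ with $\mu(\Delta_0)$ typically strictly less than $1$ — then no descendant of $\Delta$ can ever lie in $\Delta_0$. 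Reaching the characteristic vector $\eta_1=\mathcal{C}_{n_0}(\Delta_0)$ along an admissible word only produces a descendant with the same \emph{shape} as $\Delta_0$, not one contained in $\Delta_0$; conflating these two is the core error. (Even at the purely symbolic level the routing claim is shaky: from a state of $\widetilde{\Omega}$ one can reach \emph{some} essential class, but not necessarily the fixed one $\widehat{\Omega}$.) Your proposed repair — ``it suffices to work with those $\Delta\in\F_n$ that already intersect $\Delta_0$'' — begs the question: the $\Delta$ disjoint from $\Delta_0$ carry a fixed positive fraction of the total mass, and showing that discarding them does not change the $\liminf$ is exactly the statement to be proved.

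The paper closes this gap by a genuinely different, geometric mechanism that your proposal does not contain. Lemma \ref{lem: mu inverse =1} (whose proof rests on the Borel density lemma, Lemma \ref{lem: mu(E)=1}) supplies a word $J\in\Sigma_{n'}$ with $\mu(S_J^{-1}\Delta_0)=1$. Then for every $\Delta\in\F_n$, the self-similarity identity gives
\[
\mu_0(S_J(\Delta))=\mu(S_J(\Delta)\cap\Delta_0)\geq p_J\,\mu(\Delta\cap S_J^{-1}\Delta_0)=p_J\,\mu(\Delta),
\]
so the map $S_J$ transports the mass of an \emph{arbitrary} $\Delta$ into $\Delta_0$ at the cost of the fixed factor $p_J$. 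Since $S_J(\Delta)$ is a union of at most $2^{\#\Gamma}$ elements of $\F_{n+n'}$ (Lemma \ref{lem2-3} and the FTC), summing over $\Delta$ gives $\sum_{\Delta\in\F_n}\mu(\Delta)^q\leq p_J^{-q}N_0^q\sum_{\Delta'\in\F_{n+n'}:\ \Delta'\subseteq\Delta_0}\mu(\Delta')^q$, which yields $\tau(q)\geq\tau(\mu_0,q)$ via Proposition \ref{prop: calculate tau(q) by Delta}. Some device of this kind — transferring mass from all of $K$ into $\Delta_0$ by an actual similitude, not by subdividing — is indispensable, and it is the missing idea in your argument.
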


\begin{proof}Let $q>0$ be fixed.  It is clear from  Proposition \ref{prop: calculate tau(q) by Delta} that $\tau(q)\leq \tau(\mu_0,q)$. Below we prove that $\tau(q)\geq \tau(\mu_0,q)$.

 By Lemma \ref{lem: mu inverse =1}, there exist $n'\in\N$ and $J\in\Sigma_{n'}$ so that $\mu(S_{J}^{-1}\Delta_0)=1$.
 Let $n\geq 1$ and $\Delta\in\mathcal{F}_n$. Then we have
 \begin{equation}\label{eq: mu_0(SI0(Delta))}
\mu_0(S_{J}(\Delta))^q=\mu(S_{J}(\Delta)\cap\Delta_0)^q\geq p_{J}^q\mu(\Delta\cap  S_{J}^{-1}(\Delta_0))^q=p_{J}^q\mu(\Delta)^q,
\end{equation}
where the second inequality follows from \eqref{eq:self-similar id2} (in which we take $n=n'$) and the last equality holds since $\mu(S_{J}^{-1}\Delta_0)=1$. On the other hand, by Lemma \ref{lem2-3},  $S_{J}(\Delta)$ is a union of some elements of $\xi_{n+n'}$.  Moreover, by the FTC \eqref{SFTC}, it is not hard to see that $S_J(\Delta)$ contains at most $N_0:=2^{\#\Gamma}$ elements of $\mathcal{F}_{n+n'}$.

By the above argument,  for any $n\geq 1$ and $\Delta\in\mathcal{F}_{n}$, 
\[\mu_0(S_{J}(\Delta))^q=\left(\sum_{\Delta'\in\mathcal{F}_{n+n'}:\  \Delta'\subseteq S_{J}(\Delta)}\mu_0(\Delta')\right)^q\leq N_0^q\sum_{\Delta'\in\mathcal{F}_{n+n'}:\ \Delta'\subseteq S_{J}(\Delta)}\mu_0(\Delta')^q.\]
 Therefore,
\begin{equation}\label{eq: sum of mu_0}
\sum_{\Delta\in\mathcal{F}_n}\mu_0(S_{J}(\Delta))^q\leq N_0^q\sum_{\Delta\in\mathcal{F}_n} \ \sum_{\Delta'\in\mathcal{F}_{n+n'}:\ \Delta'\subseteq S_{J}(\Delta)}\mu_0(\Delta')^q\leq N_0^q\sum_{\Delta'\in\mathcal{F}_{n+n'}}\mu_0(\Delta')^q.
\end{equation}
It follows from \eqref{eq: mu_0(SI0(Delta))}-\eqref{eq: sum of mu_0} and Proposition \ref{prop: calculate tau(q) by Delta} that $\tau(q)\geq \tau(\mu_0,q)$, as desired. \qed 
\end{proof}

\begin{proof}[Proof of Theorem \ref{thm: t(q) and P(q)}]
By Lemma \ref{lem: tau(mu0)=tau(mu)}, it suffices to prove that for $q>0$,  $\tau(\mu_0,q)=P(q)/\log{\rho}.$
The proof  is a slight modification of that of \cite[Proposition 5.7]{feng1}. 

 For two vectors ${\bm a}=(a_1,\ldots,a_L)$, ${\bm b}=(b_1,\ldots, b_L)$,  write ${\bm a}\approx{\bm b}$ if there is a constant $C\geq1$ such that $C^{-1}a_i\leq b_i\leq Ca_i$ for all $i$. Let ${\bm e}$ be the row vector consisting of $L$ $1$'s. 
By  the definitions of $\widehat{{\bf u}}_{n_0,\Delta_0}$,  $M_1$ and $\{{\bf w}_i\}_{i=1}^t$, it is direct to see that 
\begin{equation}\label{eq:vnoI}
\widehat{{\bf u}}_{n_0,\Delta_0}\approx {\bm e}M_1 \quad \text{ and } \quad {\bf w}_i\approx {\bm e} \quad \text{ for }  1\leq i\leq t.
\end{equation}

Let $q>0$ be fixed. By Proposition \ref{prop: calculate tau(q) by Delta} and Lemma \ref{lem: uniform vector},  we have
\begin{equation}\label{eq: end}\tau(\mu_0,q)=\liminf_{n\to\infty}\frac{1}{n\log{\rho}}\log\sum{\left(\widehat{{\bf u}}_{n_0,\Delta_0}M_{i_1}\cdots M_{i_n}{\bf w}_{i_n}^T\right)^q},
\end{equation}
where the summation is taken over all admissible words $\eta_1\eta_{i_1}\ldots \eta_{i_n}$. Then  by \eqref{eq:vnoI}-\eqref{eq: end} and Lemma \ref{lem: uniform vector}(ii), we easily see that
\[\tau(\mu_0,q)=\liminf_{n\to\infty}\frac{1}{n\log{\rho}}\log{\sum_{i_1,\ldots, i_n\in\{1,\ldots, t\}:\ M_{1}M_{i_1}\cdots M_{i_n}\neq {\bm 0}}\|M_{1}M_{i_1}\cdots M_{i_n}\|^q}.\]
Now the remaining part of the proof is exactly the same as that of \cite[Proposition 5.7]{feng1}, so we omit the repetition here. \qed 
\end{proof}

\section{The case that \texorpdfstring{$\Phi$}{Phi} is commensurable}\label{sec:comensurable case}

As mentioned in the introduction,  Theorems \ref{main thm:matrix repn}-\ref{main thm} can be extended to  the case that  the IFS is  commensurable and satisfies a more general form of the FTC.

We say that an IFS  $\Phi=\{S_i\}_{i=1}^m$  on $\R^d$ is {\em commensurable
} if there exist $r\in (0,1)$ and positive integers $k_1,\ldots, k_m$ such that   $\rho_i=r^{k_i}$ for $1\leq i\leq m$, where $\rho_i$ is the similarity ratio of $S_i$. Let $K$ be the self-similar set generated by $\Phi$. 

Let $\rho=\min_{1\leq i\leq m}\rho_i$.  Let $\mathcal{A}_0=\{\varepsilon\}$, where $\varepsilon$ is the empty word. For  $n\geq 1$, set
\[\A_n=\left\{i_1\ldots i_{k}\in \Sigma_*: \rho_{i_1}\cdots\rho_{i_k}\leq \rho^n<\rho_{i_1}\cdots\rho_{i_{k-1}}\right\}.\]
We say that $\Phi$ satisfies the finite type condition if there exists a finite set $\Gamma$ such that for any $n\geq 1$ and $I, J\in\A_n$,
\begin{equation}\label{FTC2}
\text{either } \quad  S_I(K)\cap S_J(K)=\emptyset \quad   \text{ or } \quad  S_I^{-1}\circ S_J\in\Gamma.
\end{equation}

We define for each $n\geq 0$, a Borel partition $\xi_n$ of $K$ by using $\mathcal{A}_n$ instead of $\Sigma_n$. Precisely, for  $n\geq 0$, define $\Lambda_n: K\to 2^{\mathcal{S}}$  by 
$$\Lambda_n(x)=\{S_I: I\in \A_n \text{ with }x\in S_I(K)\} \quad  \text{ for }  x\in K.$$
Then  set 
$$\xi_n=\left\{\Lambda_n^{-1}(\mathcal{U}):\mathcal{U}\in\Lambda_n(K)\right\}.$$
It is clear  that $\xi_n$ is a finite Borel partition of $K$. For $\Delta\in \xi_n$, let $\Lambda_n(\Delta)$ be the value of $\Lambda_n$ on $\Delta$. Then define $N_n(\Delta)$ by 
\[N_n(\Delta)=\left\{S_I: I\in\A_n \text{ with } S_I(K)\cap\left(\bigcap_{f\in\Lambda_n(\Delta)}f(K)\right)\neq \emptyset\right\}.\]

Our result in this  section is the following. 

\begin{thm}\label{mainthm:com case}
Let  $\Phi=\{S_i\}_{i=1}^m$ be a commensurable IFS  on $\R^d$  satisfying  \eqref{FTC2}. Let $\mu$ be
the self-similar  measure generated by $\Phi$ and a probability vector $(p_1,\ldots,p_m)$. Then the conclusions of Theorems \ref{main thm:matrix repn}-\ref{main thm} hold.
\end{thm}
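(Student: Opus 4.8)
The plan is to show that the entire development of Sections \ref{section: symbolic expression}--\ref{section: squared matrices} goes through verbatim once one replaces the alphabet blocks $\Sigma_n$ by the ``stopping-time'' sets $\A_n$, and the key point is to isolate exactly which structural facts about $\{\xi_n\}$ were actually used. The basic identity \eqref{eq: similarity identity} still iterates: for $n\geq 1$ one has $\mu=\sum_{I\in\A_n}p_I\,\mu\circ S_I^{-1}$, because $\A_n$ is a complete prefix-free section of the tree $\Sigma^{\N}$ (every infinite word has a unique prefix in $\A_n$), so the analogue of \eqref{eq:self-similar id2} holds. What one must re-examine is Lemma \ref{lem: xi(n+1) refines xi(n)} (the net structure), since now passing from level $n$ to level $n+1$ no longer corresponds to appending a single symbol $S_i\in\Phi$. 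Instead, for $I\in\A_n$ there is a finite set of ``one-step extensions'' $W(I)=\{J: IJ\in\A_{n+1}\}$, all of uniformly bounded length (by commensurability, $\rho_i=r^{k_i}$, so the ratios drop by a controlled amount), and one has $S_I(K)=\bigcup_{J\in W(I)}S_{IJ}(K)$. With $\Phi$ replaced by the finite family $\{S_J: J\in W(I),\ I\in\A_n,\ n\geq 0\}$ — which is again a finite set of similitudes with ratios in a finite set — Lemma \ref{lem: xi(n+1) refines xi(n)}, Lemma \ref{lem: FTC} (using \eqref{FTC2}), and the whole construction of characteristic vectors $\mathcal{C}_n(\Delta)=(V_n(\Delta),U_n(\Delta),r_n(\Delta))$ carry over; the only change is that $r_n(\Delta)$ now lies in the finite set $\{S_J\circ f: J\in W(I)\text{ for some }I, f\in\Gamma\}$, which is still finite, so $\Omega$ remains finite.

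The subtle point, and the one I expect to be the main obstacle, is the linear order $\preccurlyeq$ on $\Phi_*$ and Lemmas \ref{concatenation lem}--\ref{lemDn+1}, which are what let one define symbolic expressions unambiguously and prove $\mathcal{C}_k(\Delta_1)=\mathcal{C}_\ell(\Delta_2)$ forces the offspring characteristic vectors to agree. The lexicographic order on $\Sigma_*$ still works, and $\omega:\Phi_*\to\Sigma_*$ is still well-defined. But Lemma \ref{concatenation lem} — which says $\omega(h)=\omega(f_i)j$ when $h=f_i\circ S_j$ with $i$ minimal — used in an essential way that the extension symbols $S_j$ are precisely the generators $S_1,\dots,S_m$ of the IFS and are distinct. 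In the commensurable setting, the extension words $J\in W(I)$ are words of varying length, and $W(I)$ can depend on $I$ (different $I\in\A_n$ can have different allowed one-step extensions). One needs the statement that if $h=f_i\circ S_J$ with $i$ minimal among those in $\Lambda_n(\Delta)$ admitting such a factorization with $J$ a legal extension, then $\omega(h)=\omega(f_i)J$; the proof should still work provided (a) the map $I\mapsto W(I)$ is such that $IJ\in\A_{n+1}$ is determined by $I$ and the ratios, and (b) distinct legal extensions $J,J'\in W(I)$ give distinct similitudes $S_{IJ}\neq S_{IJ'}$ — the latter needs a small argument or a harmless non-degeneracy assumption, and the former follows from the definition of $\A_{n+1}$. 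I would state a ``commensurable analogue'' of Lemma \ref{concatenation lem} and check its proof line by line, paying attention to the case analysis in Lemma \ref{lembpoforder}, which must be replaced by: $IJ<_{\rm lex}IJ'$ iff ($I<_{\rm lex}I'$) or ($I=I'$ and $J<_{\rm lex}J'$), with $J,J'$ now words rather than single letters — this is still immediate from the definition of $\leqslant_{\rm lex}$.

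For the matrix representation (Theorem \ref{main thm:matrix repn}), Section \ref{section: distribution of mu} adapts with the transition matrices $T(\mathcal{C}_{n-1}(\widehat\Delta),\mathcal{C}_n(\Delta))$ now built from the weights $p_J$ ($J\in W(I)$) rather than single $p_r$: in \eqref{eq: t_ji} one sets $t_{j,i}=p_J$ if there is a legal extension word $J$ with $S_J=v_j^{-1}\circ h_i$, and $0$ otherwise. The identity \eqref{eq: transitive identity} and Lemma \ref{lem: induction} then follow exactly as before from $\mu=\sum_{J\in W}p_{J}\mu\circ S_J^{-1}$ applied over one level of the partition. The positivity statement \eqref{eq: positive vector} uses only admissibility and the structure of $T$, so it is unchanged, and the partitioned-matrix packaging of Section \ref{section: distribution of mu} (the $M_i$, $\mathbf w_i$) is purely formal. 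For Theorem \ref{main thm} (differentiability of $\tau$), Section \ref{section: squared matrices} needs: Lemma \ref{lem: finite many mu} (uses only the net structure and \eqref{FTC2}); Lemma \ref{lem: mu(E)=1} (the Borel density argument, which uses only that $\mu=\eta\circ\pi^{-1}$ with $\eta$ a Bernoulli product, and the cylinder identity \eqref{eq: A intersect I} — still valid since $\A_n$ is a prefix-free section); Lemma \ref{lem: mu inverse =1}; and Lemma \ref{lem: k row is positive} / Proposition \ref{prop: irreducible}. The construction of $\Delta=h_k\circ S_I(\Delta_2)$ with maximal $\#\Lambda$ and $\#N$ goes through once we note $\Delta\in\F_{|I|+n_1+n_2}$ still lies in the range of some $\A_\ell$-based partition — one picks $n_2$ so that the relevant levels align, which is possible because the $\A_n$ exhaust finer and finer scales. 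Finally Theorem \ref{thm: t(q) and P(q)}: the equivalence of $L^q$-spectrum via dyadic cubes \eqref{eq: Lq-spectrum dyadic} is general; the comparison of $\D_n$ with $\xi_{k_n}$ uses that $\Phi$ satisfies the weak separation condition, which still follows from \eqref{FTC2} (FTC implies WSC, \cite{Nguyen}), and that each $\Delta\in\xi_{k_n}$ has diameter comparable to $\rho^{n}$ — true because all similitudes $S_I$, $I\in\A_n$, have ratio between $\rho\cdot\rho^n$ and $\rho^n$ by construction. Hence Proposition \ref{prop: calculate tau(q) by Delta}, Lemma \ref{lem: tau(mu0)=tau(mu)}, and the identification $\tau(q)=P(q)/\log\rho$ all transfer, and Theorem \ref{mainthm:com case} follows by invoking \cite[Theorem 3.3, Proposition 4.4]{fenglau1} exactly as in Section \ref{section: proof of main thm}. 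The upshot: I would write Section \ref{sec:comensurable case} as a guided tour through Sections \ref{section: symbolic expression}--\ref{section: squared matrices} flagging the three or four spots (net-structure lemma, concatenation lemma, definition of $T$, scale comparison in Proposition \ref{prop: calculate tau(q) by Delta}) where ``append one symbol'' becomes ``append one legal extension word,'' and verifying that finiteness and the order properties survive.
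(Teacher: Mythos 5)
Your overall strategy matches the paper's: rerun Sections \ref{section: symbolic expression}--\ref{section: squared matrices} with $\Sigma_n$ replaced by the stopping-time sets $\A_n$ and flag the spots where ``append one generator'' becomes ``append one legal extension word.'' But you miss the one modification that the paper actually singles out as essential, and it is not cosmetic. In the commensurable case the set $W(I)$ of legal one-step extensions of $I\in\A_n$ depends on the normalized ratio $\rho^{-n}\rho_{S_I}$ --- you notice this yourself in your condition (a) --- yet you keep $V_n(\Delta)=(f_1^{-1}\circ f_i)_i$ and $U_n(\Delta)$ unchanged and assert that ``the whole construction of characteristic vectors carries over,'' with the only change being the range of $r_n(\Delta)$. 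It does not: the tuple $(f_1^{-1}\circ f_i)_i$ records only the ratios $\rho_{f_i}/\rho_{f_1}$, not $\rho^{-n}\rho_{f_1}$ itself, so two cells $\Delta_1\in\xi_k$ and $\Delta_2\in\xi_\ell$ can carry identical $(V,U,r)$ data while their maps sit at different relative scales and hence admit different extension sets. Then the analogue of Lemma \ref{lem: char vector} fails, ``admissibility'' of a word over $\Omega$ is no longer independent of the realizing sequence of cells, and $T(\alpha,\beta)$ is not a well-defined function of the pair of symbols --- the whole matrix-product formalism collapses. The paper's fix is to enlarge the symbols: $V_n(\Delta)=((\varphi_i,s_i))_i$ and $U_n(\Delta)=((\psi_j,t_j))_j$ with $s_i=\rho^{-n}\rho_{f_i}$ and $t_j=\rho^{-n}\rho_{g_j}$ (and likewise $\tau_i$ in $V_n^*(\Delta)$); commensurability guarantees these extra coordinates take only finitely many values, so $\Omega$ stays finite. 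You have all the ingredients to see this but never put the ratios into the alphabet.

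A secondary point: your transition entry ``$t_{j,i}=p_J$ if there is a legal extension word $J$ with $S_J=v_j^{-1}\circ h_i$,'' together with the hoped-for non-degeneracy that distinct legal extensions give distinct similitudes, is the wrong repair. That non-degeneracy is a genuine extra hypothesis on the IFS which the theorem does not assume (distinct generators do not prevent distinct extension \emph{words} from inducing the same map), and without it the identity \eqref{eq: transitive identity} requires every word to be counted. The paper instead sets $t_{j,i}=\sum_W p_W$, summed over all $W$ for which there is $J\in\A_{n-1}$ with $JW\in\A_n$, $S_J=h_j'$ and $S_{JW}=h_i$; with that definition no non-degeneracy is needed. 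The rest of your outline --- iterating the self-similarity identity over the prefix-free section $\A_n$, the Borel density argument, and the WSC-based cube comparison using that maps indexed by $\A_n$ have ratios in $(\rho^{n+1},\rho^n]$ --- is consistent with what the paper does.
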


The idea to prove Theorem \ref{mainthm:com case}  is essentially the same as that of the equicontractive case but  many details are different. In the following, we point out the major modifications of Sections \ref{section: symbolic expression}-\ref{section: squared matrices}  needed to prove Theorem \ref{mainthm:com case}.

\subsection{The characteristic vectors of \texorpdfstring{$\Delta\in \xi_n$}{Delta in xi n}}
Let the two linear orders $\leqslant_{\rm lex}$  on $\Sigma_*$ and  $\preccurlyeq$ on $\Phi_*$, and the mapping $\omega:\Phi_*\to \Sigma_*$  be the same as in Section \ref{section: symbolic expression}. 

For  $n\geq 0$ and  $\Delta\in\xi_n$, we define the characteristic vector $$\mathcal{C}_n(\Delta)=(V_n(\Delta), U_n(\Delta), r_n(\Delta))$$
 of $\Delta$ in  following way:  Write $\Lambda_n(\Delta)$ and $N_n(\Delta)$ as ordered vectors 
$$\Lambda_n(\Delta)=(f_1,\ldots, f_k),\quad    N_n(\Delta)=(g_1,\ldots, g_{\ell}),$$
where $f_1\prec\cdots\prec f_k$ and $g_1\prec \cdots\prec g_{\ell}$ in $\Phi_*$.  Then  define  $V_n(\Delta)$ and $U_n(\Delta)$ by
$$V_n(\Delta)=((\varphi_1, s_1),\ldots, (\varphi_k, s_k)), \quad  U_n(\Delta)=((\psi_1,t_1), \ldots, (\psi_{\ell}, t_{\ell})),$$
where $\varphi_i=f_1^{-1}\circ f_i$, $s_i=\rho^{-n}\rho_{f_i}$ for $1\leq i\leq k$, and $\psi_j=f_1^{-1}\circ g_j$, $t_j=\rho^{-n}\rho_{g_j}$ for $1\leq j\leq \ell$. Define $r_n(\Delta)$  as in Section \ref{section: symbolic expression}. By \eqref{FTC2} and the assumption that $\Phi$ is  commensurable, it is easily checked that the set $\{\mathcal{C}_n(\Delta): \Delta\in\xi_n,n\geq 0\}$ is finite. 

It is easy to see that all the results in Section \ref{section: symbolic expression} have analogous statements which hold in the setting in this section.

\subsection{The transitive matrices \texorpdfstring{$T_{\widehat{\Delta},\Delta}$}{T(Delta)}} We need some modifications in the definition of the transitive matrices.

For $n\geq 1$, let $\mathcal{F}_n=\{\Delta\in\xi_n:\mu(\Delta)>0\}$. Let $\Delta\in\mathcal{F}_n$ and  assume $\Lambda_n(\Delta)=(f_1,\ldots, f_k)$. Then define
\[\Lambda_n^*(\Delta)=(h_1,\ldots, h_{\tilde{k}}), \quad  V_n^*(\Delta)=((\phi_1, \tau_1),\ldots, (\phi_{\tilde{k}}, \tau_{\tilde{k}})),\]
where $h_1,\ldots, h_{\tilde{k}}$ (ranked increasingly in the order $\prec$) are those  $f\in\{f_i\}_{i=1}^k$ satisfying $\mu(f^{-1}\Delta)>0$, and $\phi_i:=f_1^{-1}\circ h_i$, $\tau_i:=\rho^{-n}\rho_{h_i}$  for $i=1,\ldots,\tilde{k}$. Let $v_n^*(\Delta)$ denote the dimension of $V_n^*(\Delta)$, i.e. $v_n^*(\Delta)=\tilde{k}$. Similar to  Section \ref{section: distribution of mu}, we can show that $V_n^*(\Delta)$ is determined by  $\mathcal{C}_n(\Delta)$.

Let $\widehat{\Delta}\in \mathcal{F}_{n-1}$ be such that  $\Delta\subseteq \widehat{\Delta}$. Assume that $\Lambda_{n-1}^*(\widehat{\Delta})=(h_1',\ldots, h_{\hat{k}}')$. For  $1\leq i\leq \tilde{k}$ and $1\leq j\leq \hat{k}$, we define a number $t_{j,i}$ by 
\[t_{j,i}=\sum p_{W},\]
 where the summation is taken over all words $W\in\Sigma_*$ satisfying that there exists $J\in\A_{n-1}$ such that $JW\in\A_n$, $S_J=h_{j}'$ and $S_{JW}=h_i$. Define $t_{j,i}=0$ if such $W$ does not exist. Then we define a $v_{n-1}^*(\widehat{\Delta})\times v_{n}^*(\Delta)$ 
 matrix by
 \begin{equation*}\label{eq:tran matix}
 T_{\widehat{\Delta},\Delta}=(t_{j,i})_{1\leq j\leq \hat{k},\ 1\leq i\leq\tilde{k}}.
 \end{equation*}
Similar to  Section \ref{section: distribution of mu}, we can see that $T_{\widehat{\Delta}, \Delta}$ is determined by $\mathcal{C}_{n-1}(\widehat{\Delta})$ and $\mathcal{C}_n(\Delta)$. 

Using the transitive matrices constructed above,  all the results in Sections \ref{section: distribution of mu}-\ref{section: squared matrices} can be extended to the setting in this section, without using new ideas. As a consequence, Theorem \ref{mainthm:com case} is proved. We omit the details. 

{\noindent \bf  Acknowledgements}.
The author is grateful to his supervisor, De-Jun Feng, for many helpful discussions, suggestions and in particular pointing out Lemma \ref{lem: mu(E)=1}.  He also wish to thank  the anonymous referee for his/her suggestions that led to the improvement of the paper.

\end{document}